\DeclareMathAlphabet{\mathcalligra}{T1}{calligra}{m}{n}
\newtheorem{lem}{Lemma}[section]
\newtheorem{thm}[lem]{Theorem}
\newtheorem*{thm*}{Theorem}
\newtheorem{prop}[lem]{Proposition}
\theoremstyle{definition}
\newtheorem{defn}[lem]{Definition}
\newtheorem{rem}[lem]{Remark}
\newcommand*\@dblLabelI {}
\newcommand*\@dblLabelII {}
\newcommand*\@dblequationAux {}
\def\@dblequationAux #1,#2,%
\def\@dblLabelI{\label{#1}}\def\@dblLabelII{\label{#2}}}
\newcommand*{\doubleequation}[3][]{%
    \par\vskip\abovedisplayskip\noindent
    \if\relax\detokenize{#1}\relax
       \let\@dblLabelI\@empty
       \let\@dblLabelII\@empty
    \else 
       \@dblequationAux #1,%
    \fi
    \makebox[0.4\linewidth-1.5em]{%
     \hspace{\stretch2}%
     \makebox[0pt]{$\displaystyle #2$}%
     \hspace{\stretch1}%
    }%
    \makebox[0.2\linewidth]{\ }
    \makebox[0.4\linewidth-1.5em]{%
     \hspace{\stretch1}%
     \makebox[0pt]{$\displaystyle #3$}%
     \hspace{\stretch2}%
    }%
    \makebox[3em][r]{(%
  \refstepcounter{equation}\theequation\@dblLabelI,
  \refstepcounter{equation}\theequation\@dblLabelII)}%
  \par\vskip\belowdisplayskip
}
\newcommand{\mbb}[1]{\mathbb #1}
\newcommand{\mc}[1]{\mathcal #1}
\newcommand{\ms}[1]{\mathscr #1}
\newcommand{\oper}[1]{\operatorname{#1}}
\newcommand{\Br}{\oper{Br}}
\newcommand{\ind}{\oper{ind}}
\newcommand{\Gal}{\oper {Gal}}
\newcommand{\Hom}{\oper{Hom}}
\newcommand{\cha}{\oper{char}}
\newcommand{\Spec}{\oper{Spec}}
\newcommand{\cd}{\oper{cd}}
\newcommand{\ov}{\overline}
\newcommand{\til}{\widetilde}
\newcommand{\wh}{\widehat}
\DeclareMathOperator{\res}{res}
\newcommand{\spdim}[3]{\operatorname{sd}_{#2}^{#1}(#3)}
\newcommand{\stspdim}[3]{\operatorname{ssd}_{#2}^{#1}(#3)}
\newcommand{\gstspdim}[3]{\operatorname{gssd}_{#2}^{#1}(#3)}
\def\<{\left<}
\def\>{\right>}
\newcommand{\thing}[2]{
  {\xymatrix{#1 \ar@<3pt>[r] \ar@<-3pt>[r] & #2 }}}
\newcommand{\things}[4]{
  {\xymatrix{#1 \ar@<3pt>[r]^{#3} \ar@<-3pt>[r]_{#4} & #2 }}}
\title{Bounding cohomology classes over semiglobal fields
}
\author{David Harbater}
\author{Julia Hartmann}
\author{Daniel Krashen}
\date{\today}
\thanks{
\textit{Mathematics Subject Classification} (2020): 11E72, 12G05, 14H25 (primary); 14G27, 12E30 (secondary). \\
\textit{Key words and phrases.} Galois cohomology, splitting, arithmetic curves, local-global principles, finite group schemes, semi-global fields, patching, \'etale algebras.
}
\begin{document}

\maketitle
\begin{center}
{\em \small Dedicated to Moshe Jarden on his 80th birthday, to honor his contributions to patching methods in algebra.}
\end{center}

\begin{abstract}We provide a uniform bound for the index of cohomology classes in $H^i(F, \mu_\ell^{\otimes i-1})$ when $F$ is a semiglobal field (i.e., a one-variable function field over a complete discretely valued field $K$). The bound is given in terms of the analogous data
for the residue field of $K$ and its finitely generated extensions of
 transcendence degree at most one. We also obtain analogous bounds for collections of cohomology classes. Our results provide recursive formulas for function fields over higher rank complete discretely valued fields, and explicit bounds in some cases when the information on the residue field is known.  In the process, we prove a splitting result for cohomology classes of degree $3$ in the context of surfaces over finite fields.
\end{abstract}

\section{Introduction}

It is classical that the index of a central simple algebra over a global field $F$ is equal to its period as an element of the Brauer group.  In terms of Galois cohomology, this says that any element of $H^2(F, \mu_n)$ is split by an extension of degree $n$ over $F$.  The corresponding assertion does not generally hold for other fields $F$, though the period always divides the index, and the index always divides some power of the period (\cite{Pie82}, Proposition~14.4(b)(ii)). In \cite{Salt97} (see also \cite{Salt98}), it was shown
that for a one-variable function field\footnote{In this paper, we use the term {\em one-variable function field} $F$ over a field $K$ to mean a finitely generated extension of $K$ of transcendence degree one; we do not require $K$ to be algebraically closed in~$F$.} $F$ over $\mbb Q_p$, the index divides the square of the period, provided that the period is prime to $p$.

More generally, given a field $F$, one can ask if there is a uniform bound on the index in terms of the period, that is, whether there is an integer $d$ such that the index of every central simple $F$-algebra divides the $d$-th power of its period.
Starting with \cite[page~12]{CTnotes} (see also
\cite{LiebTwist}), the idea has emerged that for large classes of fields, such a uniform bound $d$ should exists, and that it should increase by one upon passage to one-variable function fields. So far, there have been a number of results giving such bounds and giving evidence for this idea.
In the case that~$F$ is a one-variable function field over a complete discretely valued field with residue field~$k$, and the period is prime to $\cha(k)$, such a bound $d$ for $F$ was found in \cite{Lieb} and \cite{HHK} in terms of the corresponding bounds for fields that are extensions of $k$ that are either finite or finitely generated of transcendence degree one.  This generalized \cite{Salt97}.
More recently, for such a field~$F$, a bound was found for a ``simultaneous index'' in \cite{gosavi2019generalized}; i.e., for the degree of an extension of $F$ that simultaneously splits an arbitrary finite set of $\ell$-torsion Brauer classes over~$F$, for a given prime $\ell \ne \cha(k)$.

In this paper, we focus on higher degree Galois cohomology groups $H^i(F,\mu_n^{\otimes i-1})$, for $i >2$. 
These higher cohomology groups have already been the subject of much investigation from various perspectives. We note in particular that in \cite{Kato} these were viewed in certain contexts as generalizations of the $n$-torsion subgroup $H^2(F, \mu_n)$ of the Brauer group $\Br(F)$ for $F$ a higher dimensional local or global field. However, much less is known in general about uniform period-index bounds for these groups; and although some conjectures have been made (see for example \cite[Conjecture~1, page 997]{DK-period}), supporting evidence has been difficult to obtain. Some important progress has been made in the case of degree~$3$ cohomology, showing that period and index coincide in the case of function fields of $p$-adic curves (\cite{PS}), function fields of surfaces over finite fields (\cite{PS3}), and more recently for function fields of curves over imaginary number fields \cite{Suresh}. Motivated by Kato's work, by the results on Brauer groups, as well as these results for degree~$3$ cohomology, in this paper we study the problem of bounding the index of a class in $H^i(F,\mu_\ell^{\otimes i-1})$ in terms of its period $\ell$, where $F$ is a one-variable function field over a complete discretely valued field $K$ with residue field $k$; and more generally bounding the minimal degree of an extension of $F$ that simultaneously splits finitely many such classes.
Namely, we define $\stspdim{i}{\ell}{F}$, called the stable $i$-splitting dimension at $\ell$ of $F$, to be the minimal $d$ such that for all finite extensions $L/F$, and for all $\alpha \in H^i(L,\mu_\ell^{\otimes i-1})$, $\ind(\alpha)$ divides $\ell^d$.  We similarly define the generalized stable $i$-splitting dimension at $\ell$ of $F$ to be an analogous quantity $\gstspdim{i}{\ell}{F}$ for the simultaneous splitting of finite sets of elements $B\subseteq H^i(L,\mu_\ell^{\otimes i-1})$.  In Theorem~\ref{main thm}, we show the following generalization of the main theorem in \cite{gosavi2019generalized}:

\begin{thm*}
In the above situation, $\stspdim{i}{\ell}{F} \le \stspdim{i}{\ell}{k} + \stspdim{i}{\ell}{k(x)} + \varepsilon$, where $\varepsilon=2$ if $\ell$ is odd and $\varepsilon=3$ if $\ell=2$.  The analogous bound also holds for $\gstspdim{i}{\ell}{F}$. Here $i$ is any positive integer.
\end{thm*}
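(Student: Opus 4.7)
The plan is to adapt the patching and local--global framework of \cite{HHK} and \cite{gosavi2019generalized}, originally applied to Brauer classes, to higher-degree Galois cohomology, while maintaining control over the simultaneous splitting of finite sets of classes. The extra cost on top of the residue-field bounds should all live inside the reciprocity obstruction that controls gluing compatible local splittings.

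Given a finite extension $L/F$ and a finite set $B \subseteq H^i(L,\mu_\ell^{\otimes i-1})$, first choose a regular projective model $\mathcal{X}/\mathcal{O}_K$ with function field $L$ such that the union of the closed fiber and the ramification supports of the classes in $B$ is a normal crossings divisor; this uses resolution of singularities for arithmetic surfaces. Let $\mathcal{P}$ be a finite set of closed points of the closed fiber containing its singular points together with all points where horizontal ramification meets the fiber, and let $\mathcal{U}$ be the set of connected components of the complement in the closed fiber. Each $\xi \in \mathcal{P}\cup\mathcal{U}$ carries a patch overfield $L_\xi$ of $L$, and there is a patching local--global principle relating simultaneous splitting over $L$ to compatible local splittings over the $L_\xi$.

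On each $L_U$-patch ($U \in \mathcal{U}$), the residue field is $\kappa(U)$, a one-variable function field over a finite extension of $k$. An auxiliary unramified Kummer extension is used to kill the residues of $B|_{L_U}$ (which live in lower cohomological degree but are absorbed inductively into the $\gstspdim{i}$ bound); the remaining unramified part then descends to a set of classes over $\kappa(U)$ which, by stability and the fact that $\kappa(U)$ is finite over $k(x)$ for any choice of transcendence basis, is simultaneously split by an extension of degree at most $\ell^{\gstspdim{i}{\ell}{k(x)}}$. On each $L_P$-patch ($P \in \mathcal{P}$), the local ring is a two-dimensional complete regular local ring with residue field $\kappa(P)$ a finite extension of $k$; iterated residue analysis in the two coordinate directions, using further blow-ups to preserve normal crossings when necessary, reduces the simultaneous splitting to an analogous problem over $\kappa(P)$ and yields the bound $\ell^{\gstspdim{i}{\ell}{k}}$.

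Finally, assemble the compatible local splittings into a global one via the patching local--global principle; this introduces an additional factor of $\ell^{\varepsilon}$ coming from a reciprocity/gluing obstruction read off the reduction graph. For odd $\ell$, the bound $\varepsilon = 2$ records the standard patching overhead (one factor for matching on the overlaps, one for adjusting the reciprocity sum), while the extra factor at $\ell = 2$, giving $\varepsilon = 3$, reflects the need to adjoin $\sqrt{-1}$ in order to align cyclic-of-order-$2$ data between patches. The main obstacle I anticipate is keeping the residue-killing Kummer extensions inside the single factors $\ell^{\gstspdim{i}{\ell}{k(x)}}$ and $\ell^{\gstspdim{i}{\ell}{k}}$ rather than spawning separate $\gstspdim{j}$ contributions for $j < i$: this is essentially free for $i = 2$ (residues are characters, split by degree-$\ell$ cyclic extensions), but for $i \ge 3$ requires a careful inductive splitting argument showing that the lower-degree residue data can be absorbed into the stated bound without accumulating cross-terms.
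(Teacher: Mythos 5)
Your overall architecture (patch, split locally over residue-field data, reassemble) matches the paper, but your bookkeeping of where the $+\varepsilon$ comes from is wrong, and this is not a cosmetic issue. In the paper's proof the local--global principle used to reassemble (Theorem~3.1.5 of [HHK14]) is \emph{cost-free}: if all classes die over every $F_\xi$ for $\xi\in\mc P\cup\mc U$, they die over $F$, with no ``reciprocity/gluing obstruction'' and no factor of $\ell$ lost to ``matching on the overlaps.'' There is no reduction-graph obstruction term to adjust in this argument, and no $\sqrt{-1}$ is adjoined at $\ell=2$. The entire $\ell^{\varepsilon}$ is spent \emph{before} patching begins: by [Gos19, Prop.~3.1] there is a single field extension $L/F$ of degree exactly $\ell^2$ (odd $\ell$) or $2^3$ ($\ell=2$) over which all classes in $B$ become unramified on a regular model. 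After that, the unramified case (Proposition~\ref{unramified} in the paper) is handled by patching at a total cost of $\gstspdim{i}{\ell}{k}+\gstspdim{i}{\ell}{k(x)}$, using Gabber's affine proper base change on the $U$-patches and proper base change plus a purity result of Sakagaito on the $P$-patches.

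This matters because the step you yourself flag as the main obstacle --- absorbing the residue-killing Kummer extensions into the factors $\ell^{\gstspdim{i}{\ell}{k(x)}}$ and $\ell^{\gstspdim{i}{\ell}{k}}$ without spawning lower-degree $\gstspdim{j}$ contributions --- is genuinely unresolvable as stated: the residue-field bounds are reserved entirely for splitting the \emph{unramified} parts of the classes, and there is no room in them for the ramification data. The correct resolution is the one the paper uses: do not try to kill residues patch-by-patch at all; kill all ramification globally and uniformly first, and charge that (and only that) to $\varepsilon$. As written, your proposal double-counts ($\varepsilon$ is charged to a nonexistent gluing obstruction while the residue-killing is supposed to be free), so the claimed bound does not follow from the argument you outline. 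A secondary gap: to get the \emph{stable} and \emph{generalized stable} versions you must also note that every finite extension $E/F$ is again a semiglobal field over a complete discretely valued field whose residue field is finite over $k$, so the same bound applies with $k$ replaced by $k'$ and then dominated by the bound for $k$; your proposal does not address this reduction.
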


Our approach first reduces to the case of unramified classes using a splitting result of 
\cite{gosavi2019generalized}.  The proof in the unramified case relies on patching over fields, a framework introduced in \cite{HH} (which was also used in \cite{HHK} and \cite{gosavi2019generalized}). In particular, it relies on a local-global principle for Galois cohomology from \cite{Hi}. 
In the case when $i=2$, i.e., when considering classes in the Brauer group, our bound agrees with that given in \cite{gosavi2019generalized} for collections of Brauer classes, but it is weaker than the bound given in \cite{HHK} for a single Brauer class. The main theorem implies recursive bounds for function fields over higher rank complete discretely valued fields.
In the final section of this paper, we apply our results in specific situations to obtain explicit numerical bounds for $\stspdim{i}{\ell}{F}$ and $\gstspdim{i}{\ell}{F}$.  These bounds give information on degree~$3$ and higher cohomology classes, in cases when the information on the Brauer group is not sufficient to obtain bounds with prior methods. For example, if $F$ is a one-variable function field over a complete discretely valued field whose residue field is a 
global function field
and $\ell$ is odd, then $\gstspdim{3}{\ell}{F}$ is at most $3$; see Proposition~\ref{number field ex}.  In order to obtain these numerical bounds, we prove a splitting result for 
surfaces over a finite field (Theorem~\ref{deg ell splitting}), which should be of independent interest. Both the splitting result and the applications 
build on work of Kato (see \cite{Kato}).

We thank the anonymous referee for helpful comments that led to improvements and to simplifications of some of the arguments.

\section{Uniform bounds for cohomology classes}

In this section, we define quantities that bound the degree of extensions needed to split a cohomology class, or a finite collections of such classes.

\begin{defn} \label{spl def}
Let $F$ be a field, and fix a prime $\ell\neq \operatorname{char}(F)$ and a positive integer $i$. 
A field extension $L/F$ is called a {\it splitting field} for a class  $\alpha \in H^i(F, \mu_\ell^{\otimes i - 1})$, if the image $\alpha_L$ of $\alpha$ under the natural map  $H^i(F, \mu_\ell^{\otimes i - 1})\rightarrow  H^i(L, \mu_\ell^{\otimes i - 1})$ is trivial. In that case, we also say that $\alpha$ {\em splits over $L$}.
Similarly, if $B \subseteq H^i(F, \mu_\ell^{\otimes i - 1})$ is a collection of elements, we say that a field extension $L/F$ is a {\em splitting field for $B$} if it is splitting field for each element of $B$.

The {\it index of a class} $\alpha \in H^i(F, \mu_\ell^{\otimes i - 1})$, denoted  by $\operatorname{ind}(\alpha)$, is the greatest common divisor of the degrees of splitting fields of $\alpha$ that are finite over $F$. Similarly, the {\em index of a subset} $B \subseteq H^i(F, \mu_\ell^{\otimes i - 1})$ is the greatest common divisor of the degrees of splitting fields of $B$ that are finite over $F$.
\end{defn}

\begin{rem}\label{rescores}
We will frequently use that if $\alpha\in H^i(F, \mu_\ell^{\otimes i - 1})$ and $E/F$ is a finite field extension of degree prime to $\ell$ such that $\alpha_E$ is trivial, then $\alpha$ is trivial, by a standard restriction-corestriction argument (using that the composition of restriction and corestriction is multiplication by the degree).
\end{rem}

\begin{lem}\label{l-power}
For $F$ a field, $\ell\neq \operatorname{char}(F)$ a prime, and $i$ a positive integer, let
$\alpha \in H^i(F, \mu_\ell^{\otimes i - 1})$. Then there exists a splitting field $L/F$ so that $[L:F]$ is a power of $\ell$. In particular, the index of $\alpha$ is a power of $\ell$. More generally, the index of a finite subset $B \subseteq H^i(F, \mu_\ell^{\otimes i - 1})$ is a power of $\ell$.
\end{lem}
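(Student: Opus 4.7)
My approach is a Sylow-type reduction on the Galois group of a finite splitting field, combined with the restriction--corestriction compatibility (Remark~\ref{rescores}). Since $H^i(F,\mu_\ell^{\otimes i-1})$ is defined via continuous cocycles on $\Gal(\sep{F}/F)$, the class $\alpha$ is inflated from some finite Galois quotient, so there exists a finite Galois splitting field $L/F$ of $\alpha$; fix one and set $G = \Gal(L/F)$.

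\textbf{Paragraph 2.} The key observation is that for every prime $p \neq \ell$ dividing $|G|$ and every Sylow $p$-subgroup $P \leq G$, the subfield $L^P \subseteq L$ also splits $\alpha$. To see this, apply the projection formula $\operatorname{Cor}\circ\operatorname{Res} = [L:L^P] = |P|$ to the class $\alpha_{L^P}$:
\[
|P|\cdot \alpha_{L^P} \;=\; \operatorname{Cor}_{L/L^P}\!\bigl(\operatorname{Res}_{L/L^P}(\alpha_{L^P})\bigr) \;=\; \operatorname{Cor}_{L/L^P}(\alpha_L) \;=\; 0.
\]
Hence $\alpha_{L^P}$ is annihilated by $|P|$, a power of $p$. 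As it is also $\ell$-torsion and $\gcd(|P|,\ell)=1$, we conclude $\alpha_{L^P}=0$. Thus $L^P$ is a splitting field of $\alpha$ with $[L^P:F]=[G:P]$ coprime to $p$, so $\ind(\alpha)$ is coprime to every prime $p\neq\ell$, hence a power of $\ell$.

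\textbf{Paragraph 3.} To produce a single splitting field of $\ell$-power degree (rather than just knowing the index is $\ell$-power), I would iterate the above Sylow reduction: starting from $L$, pick a prime $p\neq\ell$ dividing $[L:F]$, pass to $L^P$, take its Galois closure in $\sep{F}$, and induct on the degree of the resulting Galois splitting field. The main obstacle is to ensure strict progress at each step: when the normal core of $P$ in $G$ is trivial (e.g.\ for $G = A_5$ and $\ell=2$) the Galois closure of $L^P$ inside $L$ is all of $L$, so one must combine several Sylow reductions, or go outside $L$ entirely, to shrink the Galois splitting field. For a finite subset $B \subseteq H^i(F, \mu_\ell^{\otimes i-1})$, applying the single-class case to each element of $B$ yields splitting fields $L_1,\dots,L_n/F$ of $\ell$-power degree; their compositum $L_1\cdots L_n$ has degree dividing $\prod_j[L_j:F]$, so again a power of $\ell$, and splits every element of $B$, proving that $\ind(B)$ is a power of $\ell$.
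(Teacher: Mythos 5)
Your Paragraph 2 is a correct and self-contained proof of the \emph{second} assertion: taking a finite Galois splitting field $L/F$ with group $G$ and passing to $L^P$ for a Sylow $p$-subgroup $P$ with $p\neq\ell$ shows, via restriction--corestriction, that $\ind(\alpha)$ is prime to every $p\neq\ell$, hence an $\ell$-power. This is more elementary than the paper's route and does not need the norm residue isomorphism theorem; it also extends verbatim to a finite subset $B$ (take $L$ Galois splitting all of $B$ at once), so you could have avoided routing the subset case through the single-class case.

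However, the \emph{first} assertion of the lemma --- the existence of a single splitting field of $\ell$-power degree, which is strictly stronger than the index being an $\ell$-power since the index is only a gcd --- is not established. Paragraph 3 is where this would have to happen, and you candidly identify the obstruction yourself: $L^P$ is in general not Galois over $F$, its Galois closure may be all of $L$ (e.g.\ $G=A_5$, $\ell=2$, $P$ a Sylow $2$-subgroup), and there is no evident way to make the induction progress. This is a genuine gap, not a routine detail: for a general finite group $G$ there need not exist any subgroup of $\ell$-power index whose fixed field one could use, so no purely group-theoretic manipulation of an arbitrary Galois splitting field can succeed. The paper circumvents this by invoking the Bloch--Kato/norm residue isomorphism theorem: after adjoining a primitive $\ell$-th root of unity (a prime-to-$\ell$ step), $\alpha$ becomes a sum of symbols $(b_{j1})\cup\cdots\cup(b_{ji})$, and the explicit field $E=\til F(\sqrt[\ell]{b_{11}},\dots,\sqrt[\ell]{b_{m1}})$ splits $\alpha$. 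The Galois closure $\til E$ of $E/F$ then has $\Gal(\til E/\til F)$ an elementary abelian $\ell$-group and $[\til F:F]\mid \ell-1$, so Schur--Zassenhaus produces a subgroup of $\ell$-power index whose fixed field is the desired $L$. Some input of this kind --- controlling the Galois group of a splitting field rather than starting from an arbitrary one --- is unavoidable, and your proposal does not supply it.
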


\begin{proof}
Let $\rho$ be a primitive $\ell$-th root of unity, and let $\til F:=F(\rho)$.  By the Bloch-Kato conjecture/norm residue isomorphism theorem (\cite[Theorem~6.16]{voevodsky}; see also \cite{Wei}), $\alpha_{\til{F}}\in H^i(\til{F}, \mu_\ell^{\otimes i - 1})\cong H^i(\til{F}, \mu_\ell^{\otimes i})$ may be written as a sum of symbols. That is, $\alpha_{\til{F}}=\sum_{j=1}^m \beta_j$, where $\beta_j=(b_{j1})\cup \cdots \cup (b_{ji})$ for elements $b_{jk}\in {\til F}^\times$; here for $b\in \til{F}^\times$, $(b)$ denotes the class in $H^1(\til{F},\mu_\ell)\cong {\til{F}}^\times/({\til{F}}^\times)^\ell$. It then follows that $E:=\til{F}(\sqrt[\ell]{b_{11}},\ldots,\sqrt[\ell]{b_{m1}})$ is a splitting field for $\alpha$ (see also \cite{DK-period}, Remark~2.3). Let $\til{E}$ be the Galois closure of $E/F$. Note that $\til{E}/\til{F}$ is a compositum of cyclic (Galois) extensions of prime degree $\ell$ (viz., those obtained by adjoining $\ell$-th roots of the $\operatorname{Gal}(\til{F}/F)$-conjugates of the elements $b_{jk}$). Hence $\operatorname{Gal}(\til{E}/\til{F})$ is a subdirect product of cyclic groups of order $\ell$ (see, e.g. \cite{Dummit}, Chap.~14, Proposition~21). By induction, one checks that such a subdirect product is in fact a direct product of cyclic groups of order $\ell$, using that for $H_1$ cyclic of order $\ell$ and $H_2$ of $\ell$-power order, $H_1\cap H_2$ is either equal to $H_1$ or trivial. Thus $\operatorname{Gal}(\til{E}/\til{F})$ is an (elementary abelian) $\ell$-group. By the Schur-Zassenhaus theorem (\cite{Zassenhaus}, IV.7. Theorem 25; or \cite{Suzuki}, Chap. 2, Theorem~8.10),
$\operatorname{Gal}(\til{E}/F)$ contains a subgroup of $\ell$-power index and order $[\til{F}:F]$ dividing $\ell-1$. Its fixed field is an extension $L/F$ of $\ell$-power order. Since $\til{E}/L$ is of degree prime to $\ell$ and $\til{E}$ is a splitting field of $\alpha$, so is $L$  (Remark~\ref{rescores}), proving the first assertion. Note that the same argument applies to finite collections of cohomology classes. The statements on the index are immediate consequences.
\end{proof}

As a consequence of the above lemma, we can make the following definition.

\begin{defn}
For a prime $\ell$ and a positive integer $i$, 
we say that the {\it $i$-splitting dimension at} $\ell$ of $F$, denoted by $\spdim{i}{\ell}{F}$, is the minimal exponent $n$ so that  $\operatorname{ind}(\alpha) \mid \ell^n$ for all $\alpha \in H^i(F, \mu_\ell^{\otimes i - 1})$.
\end{defn}

We would like to show that the splitting dimension behaves in a controlled way upon finitely generated extensions of certain fields, and with respect to complete fields and their residues. In order to facilitate this, we will use a stronger form of splitting dimension, to require stability under finite extensions. This is analogous to notions introduced for quadratic forms and central simple algebras in \cite{HHK}.

\begin{defn}
Let $i$ be a positive integer. We say that the {\it stable $i$-splitting dimension at $\ell$ of $F$}, denoted $\stspdim{i}{\ell}{F}$, is the minimal $n$ so that $\spdim{i}{\ell}{E} \leq n$ for all finite field extensions $E/F$.
\end{defn}

In analogy to \cite{gosavi2019generalized}, we also consider collections of cohomology classes.
\begin{defn}Let $i$ be a positive integer. We define the {\em generalized stable $i$-splitting dimension} of a field $F$, denoted by $\gstspdim{i}{\ell}F$, to be the minimal exponent $n$ so that $\operatorname{ind}(B) \mid \ell^n$ for all finite field extensions $E/F$ and all finite subsets $B \subseteq H^i(E, \mu_\ell^{\otimes i - 1})$.
\end{defn}

The advantage of the generalized stable splitting dimension is that it provides information about higher degree cohomology groups as well, as in \cite[Corollary~1.4]{gosavi2019generalized}. 

\begin{prop}\label{inductive}
Let $F$ be a field of characteristic unequal to $\ell$.
  For all $i\geq j \ge 1$,
  $$\stspdim{i}{\ell}F\leq \gstspdim{j}{\ell}F$$ and
  $$\gstspdim{i}{\ell}F\leq \gstspdim{j}{\ell}F.$$
\end{prop}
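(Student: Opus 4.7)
The first inequality $\stspdim{i}{\ell}{F} \le \gstspdim{i}{\ell}{F}$ (for the same $i$) is immediate, since singletons $\{\alpha\}$ are finite subsets; combined with the second inequality applied to $F$, this yields the first. The real content is therefore the bound $\gstspdim{i}{\ell}{F} \le \gstspdim{j}{\ell}{F}$ for $i \ge j$.

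The plan is, given a finite extension $E/F$ and a finite subset $B \subseteq H^i(E,\mu_\ell^{\otimes i-1})$, to produce a finite subset $B'$ of some $H^j$ group such that every splitting field of $B'$ also splits $B$. First I would pass to $\tilde{E} = E(\rho)$, where $\rho$ is a primitive $\ell$-th root of unity, so that $\mu_\ell^{\otimes m}$ is canonically trivialized for every $m$; the extension $\tilde{E}/E$ has degree dividing $\ell-1$, hence is prime to $\ell$. Next, as in the proof of Lemma~\ref{l-power}, the norm residue isomorphism theorem lets me write each class in $B_{\tilde{E}}$ as a finite sum of symbols $(a_1)\cup\cdots\cup(a_i)$. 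I would truncate each such symbol after its first $j$ factors to obtain $(a_1)\cup\cdots\cup(a_j) \in H^j(\tilde{E},\mu_\ell^{\otimes j-1})$, and let $B'$ be the finite collection of all these truncations. The identity $(a_1)\cup\cdots\cup(a_i) = \bigl((a_1)\cup\cdots\cup(a_j)\bigr)\cup\bigl((a_{j+1})\cup\cdots\cup(a_i)\bigr)$ shows that any extension over which $B'$ vanishes kills each individual symbol and hence every element of $B_{\tilde{E}}$.

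To finish, I would apply the hypothesis to the finite subset $B'$ over the finite extension $\tilde{E}/F$ to obtain $\ind_{\tilde{E}}(B') \mid \ell^{\gstspdim{j}{\ell}{F}}$. Since every finite splitting field $L/\tilde{E}$ of $B'$ is a splitting field of $B$ over $E$ with $[L:E] = [L:\tilde{E}]\cdot[\tilde{E}:E]$, taking gcd's over such $L$ gives $\ind_E(B) \mid [\tilde{E}:E]\cdot \ell^{\gstspdim{j}{\ell}{F}}$. Since $[\tilde{E}:E]$ is prime to $\ell$ and $\ind_E(B)$ is itself a power of $\ell$ by Lemma~\ref{l-power}, this forces $\ind_E(B) \mid \ell^{\gstspdim{j}{\ell}{F}}$, as required. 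The main subtlety is the symbol-truncation step, which needs Bloch--Kato and a choice of primitive $\ell$-th root of unity; once those are in place, everything reduces to routine manipulation of gcd's and prime-to-$\ell$ degrees via Remark~\ref{rescores}.
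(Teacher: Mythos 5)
Your proposal is correct and takes essentially the same approach as the paper: after adjoining a primitive $\ell$-th root of unity (a prime-to-$\ell$ extension), Bloch--Kato is used to write each class as a sum of cup products with degree-$j$ prefixes, and splitting those finitely many degree-$j$ classes via the definition of $\gstspdim{j}{\ell}{F}$ splits the original collection, with the prime-to-$\ell$ degree absorbed by Lemma~\ref{l-power} and Remark~\ref{rescores}. The only (cosmetic) difference is that you deduce the $\operatorname{ssd}$ inequality from the $\operatorname{gssd}$ one, whereas the paper argues for a single class first and then notes the same argument handles finite collections.
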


\begin{proof}
Let $\alpha\in H^i(E, \mu_\ell^{\otimes i-1})$ for some finite extension $E$ of $F$ and $i\geq j$. 
By Remark~\ref{rescores}, we may assume that $E$ contains a primitive $\ell$-th root of
unity.  We can then use the norm residue isomorphism theorem as in the
proof of Lemma~\ref{l-power} in order to write $\alpha$ as a finite sum $\alpha=\sum_k\beta_{k}\cup \gamma_{k}$ where $\beta_{k}\in  H^j(E, \mu_\ell^{\otimes j-1}) = H^j(E, \mu_\ell^{\otimes j})$.
By definition, there exists a finite extension $L$ of $E$ such that the $\ell$-adic valuation of $[L:E]$ is at most $\gstspdim{j}{\ell}F$ and such that $L$ splits all $\beta_k$ occurring in the sum. But then $L$ splits $\alpha$, and the first claim follows.
Note that the same argument applies to finite collections of cohomology classes, hence the second assertion.
\end{proof}

The next lemma shows another useful property of the generalized stable splitting dimension. 

\begin{lem}\label{inductive_residue}
If $K$ is a complete discretely valued field having residue field $k$ with $\cha(k) \ne \ell$, then
\[\gstspdim{i}{\ell}{K} \le \gstspdim{j}{\ell}{k}\]
for all positive integers $i>j$.
\end{lem}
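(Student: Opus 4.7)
Set $n:=\gstspdim{j}{\ell}{k}$ and consider a finite extension $E/K$ together with a finite subset $B=\{\alpha_1,\ldots,\alpha_m\}\subseteq H^i(E,\mu_\ell^{\otimes i-1})$. The plan is to produce an unramified extension $E''/E$ of $\ell$-power degree at most $\ell^n$ that splits $B$; Lemma~\ref{l-power} will then give $\ind(B)\mid\ell^n$. Since $K$ is complete discretely valued, so is $E$, with residue field $e$ a finite extension of $k$. By a harmless (prime-to-$\ell$) enlargement of $E$, namely replacing $E$ by its unramified extension with residue field $e(\zeta_\ell)$ and invoking Remark~\ref{rescores}, I may assume that $e$ already contains a primitive $\ell$-th root of unity.

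Fix a uniformizer $\pi$ of $E$. I would then invoke the classical residue sequence (valid since $\cha(e)\neq\ell$)
\[
0\longrightarrow H^i(e,\mu_\ell^{\otimes i-1})\longrightarrow H^i(E,\mu_\ell^{\otimes i-1})\xrightarrow{\partial} H^{i-1}(e,\mu_\ell^{\otimes i-2})\longrightarrow 0,
\]
which is split by $b\mapsto \tilde b\cup(\pi)$, where $\tilde{\,\cdot\,}$ denotes inflation through the maximal unramified extension. This decomposes each $\alpha_r=\tilde a_r+\tilde b_r\cup(\pi)$ with $a_r\in H^i(e,\mu_\ell^{\otimes i-1})$ and $b_r\in H^{i-1}(e,\mu_\ell^{\otimes i-2})$. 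The point is that if a finite extension $e''/e$ simultaneously splits every $a_r$ and every $b_r$, then the unramified extension $E''/E$ with residue field $e''$, of degree $[E'':E]=[e'':e]$, splits all of $B$: by functoriality of inflation each $(\tilde a_r)_{E''}$ and $(\tilde b_r)_{E''}$ vanishes, and $\pi$ remains a uniformizer of $E''$.

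The key step---and the main obstacle---is producing such an $e''$ with $[e'':e]\le\ell^n$. A naive sequential approach (first split the $a_r$, then split the $b_r$ over the resulting extension) would only give the weaker bound $\ell^{2n}$. To obtain the sharp bound, I would adapt the symbol decomposition used in the proof of Proposition~\ref{inductive}: the norm residue isomorphism theorem, combined with $j\le i-1$, writes each $a_r$ as a sum $\sum_k \beta^a_{r,k}\cup\gamma^a_{r,k}$ with $\beta^a_{r,k}\in H^j(e,\mu_\ell^{\otimes j})$, and each $b_r$ as a sum $\sum_k \beta^b_{r,k}\cup\gamma^b_{r,k}$ with $\beta^b_{r,k}\in H^j(e,\mu_\ell^{\otimes j})$. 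Let $\mathcal C\subseteq H^j(e,\mu_\ell^{\otimes j-1})$ be the finite combined family of all these $\beta$'s.

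Since $e/k$ is a finite extension, applying the definition of $\gstspdim{j}{\ell}{k}$ to $\mathcal C$ yields a splitting field $e''/e$ of $\ell$-power degree at most $\ell^n$. This $e''$ splits every $a_r$ and every $b_r$, hence every $\alpha_r$ after base change to the corresponding unramified $E''$. Since $[E'':E]=[e'':e]$ divides $\ell^n$ and the index of $B$ is a power of $\ell$ (by Lemma~\ref{l-power}) dividing $[E'':E]$, we conclude $\ind(B)\mid\ell^n$, as required.
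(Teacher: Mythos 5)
Your argument is correct and follows essentially the same route as the paper's proof: decompose each class via the Witt/residue decomposition into a pair of residue-field classes, write those as sums of symbols with a factor in degree $j$ via the norm residue isomorphism theorem, split the whole finite family of degree-$j$ factors at once using the definition of $\gstspdim{j}{\ell}{k}$ applied to the (finite extension) residue field, and lift the resulting extension to the complete field. The only cosmetic differences are that you work over an arbitrary finite extension $E/K$ rather than reducing to $K$ itself, and you should note (as the paper does) that the splitting field of the residue field may be replaced by its separable part via restriction--corestriction before lifting.
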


\begin{proof}
Since any finite extension of $K$ is of the same form, it suffices to consider classes defined over $K$. Let $\alpha_1,\ldots, \alpha_m\in H^i(K,\mu_\ell^{\otimes i-1})$. By the Witt decomposition theorem (\cite{GSz}, Corollary~6.8.8), that cohomology group is isomorphic to $H^i(k,\mu_\ell^{\otimes i-1})\oplus H^{i-1}(k,\mu_\ell^{\otimes i-2})$, so each $\alpha_r$ is of the form $(\beta_r, \beta_r')$, where $\beta_r, \beta_r'$ are classes over the residue field of degree $i$ and $i-1$, respectively. As in the proof of Proposition~\ref{inductive} above, we may assume that $K$ contains a primitive $\ell$-th root of unity and we may 
write $\beta_r$ and $\beta_r'$ as sums of
terms that are each of the form $\gamma \cup \delta$ where $\gamma \in H^j(k, \mu_\ell^{\otimes j-1})$.
But then all $\beta_r, \beta_r'$ are split by a 
finite extension $k'/k$ such that the $\ell$-adic valuation of $[k':k]$ is at most $\gstspdim{j}{\ell}{k}$. 
Since $K$ is complete, this extension lifts to a finite extension $K'/K$ of the same degree (by applying \cite[Th\'eor\`eme~I.6.1]{SGA1} to lift the maximal separable subextension, and then iteratively lifting $p$-th roots for the purely inseparable part).  This lift then splits $\alpha_1,\ldots, \alpha_m$, by the Witt decomposition theorem applied to $K'$ and $k'$.
\end{proof}

Our main result is the following theorem, which is proven in Section~\ref{main thm sect}.

\begin{thm} \label{main thm}
Suppose $k$ is a field and $\ell$ is a prime unequal to the characteristic of $k$. Let $k(x)$ denote the rational function field over $k$ in one variable.
Let $K$ be a complete discretely valued field with residue field~$k$, and let $F$ be a one-variable function field over~$K$. Then for all $i \ge 1$,
$$\stspdim{i}{\ell}{F} \leq \stspdim{i}{\ell}{ k} + \stspdim{ i}{\ell}{k(x)} +\begin{cases}2 &\text{if\,
 $\ell$ is odd}\\3&\text{if\, $\ell=2$}\end{cases}$$
and
$$\gstspdim{i}{\ell}{F} \leq \gstspdim{i}{\ell}{ k} + \gstspdim{ i}{\ell}{k(x)} +\begin{cases}2 &\text{if\, $\ell$ is odd}\\3&\text{if\, $\ell=2$.}\end{cases}$$
\end{thm}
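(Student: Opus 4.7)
The plan is to follow the two-step strategy hinted at in the introduction: first reduce to the case of an unramified class on a regular projective model of $F$ over $\mathcal O_K$, and then handle the unramified case by patching over fields combined with the local-global principle for Galois cohomology of \cite{Hi}. I would treat the single-class and multi-class statements in parallel, since every tool involved (the norm residue isomorphism from the proof of Lemma~\ref{l-power}, Lemma~\ref{inductive_residue}, patching, and the local-global principle) is uniform in the number of classes considered. As in Remark~\ref{rescores}, passing to an extension containing a primitive $\ell$-th root of unity costs nothing in $\ell$-power degree, so I may work with $\mu_\ell$ pre-adjoined throughout.

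For the first step, I would fix a regular projective flat model $\hat X \to \Spec \mathcal O_K$ of $F$ and analyse the residues of $\alpha \in H^i(F,\mu_\ell^{\otimes i-1})$ at the codimension-one points of $\hat X$ corresponding to irreducible components of the closed fiber $X$. These residues lie in $H^{i-1}$ of function fields that are one-variable over finite extensions of $k$, and so the splitting result of \cite{gosavi2019generalized} would produce a finite extension $F'/F$ of $\ell$-power degree --- bounded by a term that will be absorbed into the $\varepsilon$ --- such that $\alpha_{F'}$ becomes unramified on a suitably modified model over the corresponding complete discretely valued base.

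For the second step, I would choose a finite set $\mathcal P$ of closed points of $X$ containing all singular points and all points where the class has exceptional behaviour, and set up the patching of \cite{HH}: overfields $F_P$ for $P \in \mathcal P$, $F_U$ for each component $U$ of $X \smallsetminus \mathcal P$, and branch fields $F_\wp$. The field $F_P$ is a complete discretely valued field whose residue field is itself complete discretely valued with residue a finite extension of $k$, so a double application of Lemma~\ref{inductive_residue} (combined with the unramified reduction) produces a splitting field of $\alpha_{F_P}$ of $\ell$-power degree at most $\ell^{\stspdim{i}{\ell}{k}}$. The field $F_U$ similarly has a residue that is a one-variable function field over $k$, giving a splitting field of $\alpha_{F_U}$ of degree at most $\ell^{\stspdim{i}{\ell}{k(x)}}$. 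By the local-global principle of \cite{Hi}, simultaneously splitting $\alpha$ over every $F_P$ and every $F_U$ splits it over $F$; patching for \'etale algebras would then upgrade the local splitting data to a single finite extension $L/F$ whose degree is essentially the product of the local degrees and a gluing factor.

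The main obstacle will be the patching/gluing step: making the local splitting fields compatible on the branches $F_\wp$ to produce a global $L/F$ requires enlarging both the patches and the local extensions, and it is this gluing together with the unramified reduction that contributes $\varepsilon$ above the local bounds. For odd $\ell$, I expect a factor of $\ell^2$ overhead --- one from the reduction to unramified classes and one from the compatible branch enlargement --- while at $\ell = 2$ an extra factor of $2$ appears because the Schur--Zassenhaus-type argument used in Lemma~\ref{l-power} to extract a $2$-power subfield from a Galois closure is less efficient than in odd characteristic. The generalized stable version should follow from exactly the same argument, substituting the collection form of the splitting result of \cite{gosavi2019generalized} and observing that each of the intermediate steps handles finite sets of classes with no additional cost.
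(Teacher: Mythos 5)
Your overall strategy is the same as the paper's: reduce to unramified classes on a regular model via the ramification-splitting result of \cite{gosavi2019generalized}, then split the unramified classes locally over the patches $F_P$ and $F_U$, glue the local splitting fields into an \'etale $F$-algebra, and invoke the local-global principle of \cite{Hi}. However, there is a genuine gap in your treatment of the local splitting over $F_P$. The field $F_P$ is the fraction field of the two-dimensional complete local ring $\wh R_P=\wh{\mc O}_{\ms X,P}$; it is \emph{not} a complete discretely valued field whose residue field is again complete discretely valued, so Lemma~\ref{inductive_residue} does not apply to it. Moreover, even if it did, that lemma bounds $\gstspdim{i}{\ell}{K}$ by $\gstspdim{j}{\ell}{k}$ only for $j<i$ -- it strictly lowers the cohomological degree -- so a ``double application'' would produce a bound in terms of degree-$(i-2)$ cohomology of $\kappa(P)$, not the quantity $\stspdim{i}{\ell}{k}$ appearing in the statement. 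The correct argument is purity plus proper base change: since $\alpha_{F_P}$ is unramified on $\Spec(\wh R_P)$, it lifts to $H^i(\wh R_P,\mu_\ell^{\otimes i-1})$ (by \cite{Sakagaito}), which is isomorphic to $H^i(\kappa(P),\mu_\ell^{\otimes i-1})$ by proper base change, and one then splits the residual class by a separable extension of $\kappa(P)$ of controlled degree and lifts it to a finite \'etale $\wh R_P$-algebra. This unramifiedness of the lifted extension is also exactly what Lemma~\ref{point extension} requires for the gluing, a hypothesis you would not be able to verify from your version of the local step. The analogous reduction for $F_U$ goes through Gabber's affine analog of proper base change applied to $\wh R_U$ (after shrinking $U$), not through a residue-field argument for complete discretely valued fields.

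Your accounting of the constant $\varepsilon$ is also off, though this is less serious. The entire overhead of $\ell^2$ (resp.\ $2^3$) comes from the single application of \cite[Prop.~3.1]{gosavi2019generalized} that kills the ramification; the gluing step costs nothing in $\ell$-adic valuation, because the local degrees are equalized by taking least common multiples and padding with direct sums of field copies, so the $\ell$-adic valuation of the glued degree is the maximum (not the product) of the local ones, and the final field $L$ is a direct summand of $L_1\otimes_F L_2$. In particular the extra factor of $2$ when $\ell=2$ has nothing to do with the Schur--Zassenhaus argument of Lemma~\ref{l-power}; it is simply the exponent $3$ in Gosavi's ramification-splitting result for $\ell=2$.
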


The main interest is in the case $i > 1$. In fact, $\stspdim{1}{\ell}{F}=1$ and $\gstspdim{1}{\ell}{F} = \infty$ for any field $F$ for which $F^\times/(F^\times)^\ell$ is infinite (in particular, for $F$ as in the theorem). This is because $H^1(E,\mbb Z/\ell\mbb Z) = E^\times/E^{\times \ell}$ is then infinite for any finite extension $E/F$, and because a non-trivial $\mbb Z/\ell\mbb Z$-torsor over $E$ corresponds to a field extension that splits only over itself. 
For the same reason, a non-trivial class $\alpha \in H^1(E,\mbb Z/\ell\mbb Z)$ satisfies $\ind(\alpha)=\ell$.

Even for $i > 1$, we do not assert that these bounds are sharp. Nevertheless, in light of this theorem and \cite[Theorem~5.5]{HHK}, it is natural to investigate more precisely how these quantities grow.  In particular, one might ask whether $\stspdim{i}{\ell}{F}$ and $\gstspdim{i}{\ell}{F}$ are 
bounded above by $\dim(F)-i+1$ for certain naturally occurring fields $F$;
i.e., those obtainable from a prime field by passing iteratively to
finite generated field extensions of transcendence degree one over a
given field, and to henselian discretely valued fields with a given
field as residue field.  Here, $\dim(F)$ is defined inductively,
with the dimensions of $\mbb F_p$ and $\mbb Q$ set equal to $1$ and $2$,
and with the dimension increasing by one at each iterative step.  But
proving such an assertion seems a long way off.

\section{Preliminaries from Patching}

The proof of the main theorem will use the patching framework introduced in \cite{HH} and \cite{HHK}, which we now recall.

Let $K$ be a complete discretely valued field with residue field $k$, valuation ring $\mc O_K$, and uniformizer~$t$. Let $F$ be a semiglobal field over $K$; i.e., a one-variable function field over $K$. A {\em normal model of $F$} is an integral $\mc O_K$-scheme $\ms X$ with function field $F$ that is flat and projective over $\mc O_K$ of relative
dimension one, and that is normal as a scheme. If $\ms X$ is regular, we call it a {\em regular model}. Such a regular
model exists by the main theorem in \cite{Lipman} (see also \cite{stacks-project}, Theorem 0BGP).
Let $\mc P$ be a finite nonempty set of closed points of $\ms X$ that contains all the singular points of the reduced closed fiber $\ms X_k^{\text{red}}$. Let $\mc U$ be the collection of connected components of the complement $\ms X_k^{\text{red}}\smallsetminus \mc P$ .

For each $U\in \mc U$, we consider the ring $R_{U} \subset F$
consisting of the rational functions on~${\ms X}$ that are regular at all points of $U$.
The $t$-adic completion $\wh{R}_{U}$ of $R_{U}$ is an $I$-adically complete domain, where
$I$ is the radical  of the ideal generated by $t$ in $\wh{R}_{U}$.
The quotient
$\wh{R}_{U}/I $ equals $k[U]$, the ring of regular functions on the integral affine curve $U$.  We write $F_U$ for the field of fractions of $\wh R_U$. If $V\subseteq U$, then $\wh R_U\subseteq \wh R_V$ and $F_U\subseteq F_V$.

Also, for
a (not necessarily closed)
 point $P$ of ${\ms X}_k^{\operatorname{red}}$, we let~$F_{P}$ denote the field of fractions of the complete local ring $\wh R_P:=\wh{\mc O}_{\ms X,P}$ of $\ms X$ at $P$,  and we let $\kappa(P)$ denote its residue field. The
fields of the form $F_P$, $F_U$ for $P\in \mc P$, $U\in \mc U$ (and the rings  $\wh R_P, \wh R_U$, respectively) are called {\it patches} on $\ms X$.

For a closed point $P \in {\ms X}_k^{\operatorname{red}}$, we consider height one primes $\wp$ of the complete local ring $\wh R_P$ that contain the uniformizing parameter $t \in {\mc O}_K$.
For each such $\wp$, we let $R_\wp$ be the localization of $\wh R_P$ at $\wp$, and we let $\wh R_\wp$ be its $t$-adic (or equivalently, its $\wp$-adic) completion; this is a complete discrete valuation ring. We write $F_\wp$ for the fraction field of $\wh R_\wp$. If $P$ is on the closure of $U$, we call such a $\wp$ a {\it branch at} $P$ {\it on} $U$.
Let $\mc B$ denote the set of all branches at points $P\in \mc P$ (each of which lies on some $U\in \mc U$). The fields $F_\wp$ (resp., rings $\wh R_\wp$) are referred to as the {\it overlaps} of the corresponding patches $F_P, F_U$ (resp., $\wh R_P, \wh R_U$).  For a branch $\wp$ at $P$ on $U$,
there is an inclusion $F_{P} \subset F_\wp$ induced by the inclusion $\wh{R}_P\subset \wh{R}_\wp$, and also an inclusion $F_{U} \subset  F_\wp$ that is induced by the inclusion $\wh{R}_{U} \hookrightarrow \wh{R}_\wp$. (See \cite{admis}, beginning of Section~4.)

The strategy for proving Theorem~\ref{main thm} relies on putting ourselves in the above context. Given a class $\alpha \in H^i(F, \mu_\ell^{\otimes i - 1})$, we will choose a suitable regular model $\ms X$, along with
$\mc P$ and $\mc U$, and will
construct splitting fields $L_\xi/F_\xi$ for $\alpha_{F_\xi}$, for each $\xi\in {\mc P}\cup \mc U$. Next, we will use these to obtain an extension $L/F$ that splits $\alpha$ locally.  Finally, 
we will use a local-global principle from \cite{Hi} to show that this extension in fact splits $\alpha$.  To handle the second of those three steps, we prove some auxiliary results, starting with a general lemma.  

\begin{lem}
\label{branch to point}
Let $v_1,\dots,v_n$ be distinct non-trivial discrete valuations on a field $E$, with completions $E_i$.  Let $d$ be a positive integer and for each $i$ let $L_i$ be an \'etale $E_i$-algebra of degree $d$.  Then there exists an \'etale $E$-algebra $L$ of degree $d$ such that $L \otimes_E E_i \cong L_i$ for all $i$.  If some $L_i$ is a field, then so is $L$.
\end{lem}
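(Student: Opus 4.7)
The plan is to realize each $L_i$ as $E_i[x]/(f_i)$ for a monic separable polynomial $f_i$ of degree $d$, then use weak approximation to find $f\in E[x]$ that is simultaneously $v_i$-adically close to each $f_i$, and finally invoke Krasner's lemma to identify $E_i[x]/(f)$ with $L_i$. One then sets $L := E[x]/(f)$.

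For the monogenicity step, note that $E_i$ is infinite (its valuation is non-trivial and discrete, so its value group is all of $\mathbb{Z}$, giving elements of infinitely many distinct valuations). Writing $L_i = \prod_j L_{i,j}$ as a finite product of finite separable field extensions of $E_i$, I would choose generators $\alpha_{i,j}$ of the $L_{i,j}/E_i$ whose minimal polynomials are pairwise coprime; this is possible because each $L_{i,j}$ has infinitely many primitive elements lying in infinitely many $\mathrm{Gal}$-conjugacy classes. The tuple $(\alpha_{i,1},\dots)$ is then a primitive element of $L_i$ with minimal polynomial $f_i$ of degree $d$ and non-zero discriminant. Next, by the Artin--Whaples weak approximation theorem for the distinct discrete valuations $v_1,\dots,v_n$ on $E$, the diagonal image of $E$ is dense in $\prod_i E_i$, so one can choose a monic $f\in E[x]$ of degree $d$ whose coefficients are arbitrarily close (in the respective $v_i$-adic topologies) to those of $f_i$ for all $i$ at once. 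Once the approximation is sharp enough, Krasner's lemma produces, for each $i$, a bijection between the roots of $f$ and those of $f_i$ in a fixed algebraic closure of $E_i$ that preserves the field each root generates over $E_i$; in particular $f$ is separable over each $E_i$, and $E_i[x]/(f) \cong L_i$ as $E_i$-algebras.

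Since $f$ has non-zero discriminant in each $E_i$, its discriminant (an element of $E$) is non-zero, so $L = E[x]/(f)$ is étale of degree $d$ over $E$, with $L\otimes_E E_i \cong L_i$ for all $i$. For the last assertion, if some $L_i$ is a field then $f_i$ is irreducible in $E_i[x]$; the Krasner identification $E_i[x]/(f)\cong L_i$ then forces $f$ to be irreducible over $E_i$, hence a fortiori over $E$, so $L$ is a field. The main technical point is making the Krasner bound explicit: one must quantify how close the coefficients of $f$ must be to those of each $f_i$ to guarantee that the roots of $f$ in $\overline{E_i}$ land in the Krasner neighborhoods of the roots of $f_i$. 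This is a standard estimate depending only on the separation of the roots of the fixed polynomials $f_i$, and simultaneous control across all $i$ is exactly what weak approximation provides.
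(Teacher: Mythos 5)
Your proposal is correct and follows essentially the same route as the paper: a primitive element for each \'etale algebra $L_i$ (the paper cites First--Reichstein for this, you sketch it directly), weak approximation to find a single monic $f\in E[x]$ simultaneously close to all the $f_i$, and Krasner's lemma to identify $E_i[x]/(f)$ with $L_i$. The only packaging difference is that the paper handles the reducible case by first factoring the approximating polynomial via a quantitative Hensel's lemma into factors close to the irreducible factors of $f_i$ and then applying Krasner factor by factor, whereas you invoke continuity of roots plus a root-matching form of Krasner; these are equivalent, and your acknowledgment that the bounds must be made uniform and explicit is exactly the technical point the paper's $m_i$, $n_i$ take care of.
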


\begin{proof}
The complete discretely valued field $E_i$ is infinite for each $i$, and so by Corollary~4.2(d) of \cite{MR3590278} there is a primitive element for the \'etale algebra $L_i$ over $E_i$, say with monic minimal polynomial $f_i(x) \in E_i[x]$ of degree $d$.  For each $i$, there is an extension of $v_i$ to a discrete valuation on the polynomial ring $E_i[x]$, by taking the minimum of the valuations on the coefficients; we again write $v_i$ for that extension.
Applying Krasner's Lemma (\cite{Lang}, Prop. II.2.4) to each monic irreducible factor $f_{ij}$ of $f_i \in E_i[x]$ (and then taking the maximum) gives a positive integer $n_i$ such that for any monic polynomial $h_i\in E_i[x]$ of the same degree as some $f_{ij}$, if $v_i(h_i-f_{ij})>n_i$ then $h_i$ is irreducible, and the polyomials $h_i$ and $f_{ij}$ define the same field extension of $E_i$.  By a general form of Hensel's Lemma (see Theorem 8 of \cite{Brink}), for each $i$ there is an integer $m_i$ such that for any monic polynomial $g_i \in E_i[x]$ of degree equal to that of $f_i$ and with $v_i(f_i-g_i) > m_i$, we may write $g_i$ as a product of monic polynomials $g_{ij} \in E_i[x]$ of the same respective degrees as $f_{ij}$ and such that
$v_i(f_{ij}-g_{ij})>n_i$ for all $j$.

The field $E$ is dense in $\prod E_i$ by Theorem VI.7.2.1 of \cite{Bourbaki}.
Hence we may find a monic polynomial $f\in E[x]$ of degree $d$ such that $v_i(f_i-f)>m_i$ for all $i$.  By
the definition of $m_i$, we may write $f$ as a product of monic factors $g_{ij}$ over $E_i$ that are respectively of the same degrees as the polynomials $f_{ij}$ and with $v_i(f_{ij}-g_{ij})>n_i$.
By the definition of $n_i$, each factor $g_{ij}$ of $f$ over $E_i$ is irreducible and defines
the same field extension of $E_i$ as $f_{ij}$;
and so the \'etale algebras induced by $f$ and by $f_i$ over $E_i$ are the same.  Hence the
 \'etale $E$-algebra $L$ defined by $f$ induces $L_i$ over $E_i$ for all $i$. The last assertion is clear.
\end{proof}

Resuming our notation for semiglobal fields, we have the following.

\begin{lem}\label{U extension}
Given $F$, $\mc X$, and $\mc U$ as above, suppose that for each $U\in \mc U$ we are given an \'etale $F_U$-algebra $L_U$ of (a common) degree~$d$. Then there exists an \'etale $F$-algebra $L$ (necessarily of degree~$d$) such that $L \otimes_F F_U \cong L_U$ for all $U$. If some $L_U$ is a field, so is $L$.
\end{lem}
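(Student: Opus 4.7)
My plan is to produce the ``missing'' data over the points $P\in \mc P$ by a direct application of the previously proved Lemma~\ref{branch to point}, and then to glue the resulting collection of étale algebras via the patching machinery of \cite{HH} and \cite{HHK}.

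For each $P\in \mc P$, the branches at $P$ correspond bijectively to the height-one primes $\wp$ of $\wh R_P$ that contain the uniformizer $t$; each such branch lies on a unique $U\in \mc U$. Each branch $\wp$ determines a nontrivial discrete valuation of $F_P$ whose completion is $F_\wp$, and these valuations are pairwise distinct as $\wp$ varies. The base change $L_U\otimes_{F_U} F_\wp$ is an étale $F_\wp$-algebra of degree $d$. Applying Lemma~\ref{branch to point} to $E=F_P$ with these discrete valuations and étale algebras yields an étale $F_P$-algebra $L_P$ of degree $d$, equipped with isomorphisms $L_P\otimes_{F_P} F_\wp \cong L_U\otimes_{F_U} F_\wp$ for every branch $\wp$ at $P$ on $U$.

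At this point I have étale algebras $L_\xi$ of degree $d$ over every patch $F_\xi$ for $\xi\in \mc P\cup \mc U$, together with overlap isomorphisms over each $F_\wp$; the cocycle condition is vacuous because each overlap $F_\wp$ is shared by exactly one point patch and one open patch. By the patching theorem for finite-dimensional vector spaces over $F$, together with its extension to objects carrying additional $F$-linear tensorial structure (as developed in \cite{HH} and \cite{HHK}), these data descend to a unique $F$-algebra $L$ of dimension $d$ with $L\otimes_F F_\xi \cong L_\xi$ for every $\xi$. Étaleness is detected after the faithfully flat base change $F\to \prod_\xi F_\xi$, so $L$ is étale over $F$ of degree $d$. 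For the final assertion, if some $L_{U_0}$ is a field then $L$ embeds as an $F$-subalgebra of $L\otimes_F F_{U_0}\cong L_{U_0}$, hence is an integral domain; being finite over $F$, it is a field.

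The only real content is Step~1, which is a direct application of Lemma~\ref{branch to point}; the remaining work is simply an invocation of the patching setup recalled earlier in the section. If there is any obstacle, it is verifying that the branches at $P$ indeed furnish the required distinct discrete valuations on $F_P$ with the ``right'' completions $F_\wp$ --- but this is essentially a restatement of how $F_\wp$ is defined from $\wh R_P$, and no serious technical work is needed beyond unwinding the definitions.
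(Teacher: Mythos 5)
Your proposal is correct and follows essentially the same route as the paper: use Lemma~\ref{branch to point} at each $P\in\mc P$ to manufacture compatible \'etale $F_P$-algebras from the induced branch algebras $L_U\otimes_{F_U}F_\wp$, then invoke patching for \'etale algebras (the paper cites Proposition~3.7 and Example~2.7 of \cite{HHK:pop} for this step, where you instead sketch the descent of the algebra structure directly). Your added justifications for \'etaleness of $L$ and for the final field assertion are fine.
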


\begin{proof}
For a point $P \in \mc P$, each branch $\wp$ at $P$ lies on the closure of a unique $U\in \mc U$; and we
define an \'etale $F_{\wp}$-algebra $L_{\wp}:=L_U\otimes_{F_U} F_{\wp}$. 
Applying Lemma~\ref{branch to point} to the field $F_P$ and the discrete valuations corresponding to the branches at $P$, we obtain an \'etale $F_P$-algebra $L_P$ such that $L_P\otimes F_{\wp}\cong L_{\wp}$ for each of the branches $\wp$ at $P$. Therefore, we have defined a system of \'etale $F_\xi$-algebras $L_\xi$ for $\xi\in {\mc P}\cup {\mc U}$, together with isomorphisms $L_P\otimes_{F_P}F_{\wp}\cong L_U\otimes_{F_U}F_{\wp}$ whenever $\wp$ is a branch at $P$ on $U$. Since patching holds for \'etale algebras in this context (see, for example, Proposition~3.7 and Example~2.7 in \cite{HHK:pop}), there is an \'etale $F$-algebra $L$ with the desired properties. The final assertion is clear.
\end{proof}

The next lemma is a variant of \cite[Theorem~2.6]{HHKPS:Gal}.

\begin{lem} \label{point extension}
With $K$, $k$, $F$, and $\mc P$ as above, suppose that for each $P\in \mc P$, we are given an \'etale $F_P$-algebra $L_P$ of (a common) degree~$d$ prime to the characteristic of $k$, and assume that the integral closure of $\wh R_P$ in $L_P$ is unramified over $\wh R_P$. Then there exists an \'etale $F$-algebra $L$ such that $L \otimes_F F_P \cong L_P$ for all $P$. If some $L_P$ is a field, so is $L$.
\end{lem}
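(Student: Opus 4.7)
My plan is to construct compatible étale algebras over the patches $F_\xi$ ($\xi\in\mc P\cup\mc U$) and apply patching, paralleling the proof of Lemma~\ref{U extension} but running the direction of Lemma~\ref{branch to point} on the $F_U$ side rather than on the $F_P$ side. Concretely, for each branch $\wp$ at a point $P\in\mc P$ lying on some $U\in\mc U$, set $L_\wp := L_P\otimes_{F_P}F_\wp$; this is an étale $F_\wp$-algebra of degree $d$. The unramified hypothesis ensures that the integral closure of $\wh R_\wp$ in $L_\wp$ is the étale $\wh R_\wp$-algebra $S_P\otimes_{\wh R_P}\wh R_\wp$ (where $S_P$ is the integral closure of $\wh R_P$ in $L_P$), so in particular each $L_\wp/F_\wp$ is unramified with respect to the discrete valuation associated to $\wp$.

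The key step is to produce, for each $U\in\mc U$, an étale $F_U$-algebra $L_U$ of degree $d$ with $L_U\otimes_{F_U}F_\wp\cong L_\wp$ for every branch $\wp$ on $U$ at some $P\in\mc P$. For this, I invoke Lemma~\ref{branch to point} applied to $E=F_U$, taking the $v_i$ to be the finitely many distinct discrete valuations on $F_U$ coming from branches on $U$; the corresponding completions are the overlap fields $F_\wp$, because in the patching framework $\wh R_\wp$ is the completion of the localization of $\wh R_U$ at the height-one prime defining the branch, so $F_U$ is dense in $\prod_\wp F_\wp$. Lemma~\ref{branch to point} then delivers the desired $L_U$.

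With $L_P$, $L_U$, and $L_\wp$ in hand, the tautological isomorphism $L_P\otimes_{F_P}F_\wp=L_\wp$ and the isomorphism $L_U\otimes_{F_U}F_\wp\cong L_\wp$ from Lemma~\ref{branch to point} assemble into a compatible system of étale algebras over the patches and overlaps. Applying the patching theorem for \'etale algebras (Proposition~3.7 and Example~2.7 of \cite{HHK:pop}), precisely as in the proof of Lemma~\ref{U extension}, yields an étale $F$-algebra $L$ of degree $d$ with $L\otimes_F F_\xi\cong L_\xi$ for all $\xi\in\mc P\cup\mc U$; in particular $L\otimes_F F_P\cong L_P$ as required. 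The field assertion is then automatic: if some $L_P$ is a field, a nontrivial decomposition $L=L'\times L''$ would force $L_P=(L'\otimes_F F_P)\times(L''\otimes_F F_P)$ to be decomposable, a contradiction.

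The main technical point to check is the claim used in the second paragraph, that the overlap fields $F_\wp$ coincide with the completions of $F_U$ at the branch valuations $v_\wp$; this is built into the patching setup of \cite{HH,HHK} since $\wh R_\wp$ is by construction the completion of $(\wh R_U)_\wp$ at the prime corresponding to $\wp$. Given this, the whole proof is a straightforward dualization of Lemma~\ref{U extension}, with the unramified hypothesis serving to identify the integral closures at overlaps and to make the compatibility data well-defined.
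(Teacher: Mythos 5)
Your reduction to Lemma~\ref{branch to point} with $E=F_U$ does not work, because the overlap field $F_\wp$ is \emph{not} the completion of $F_U$ at a discrete valuation, and $F_U$ is not dense in $\prod_\wp F_\wp$. The asymmetry with Lemma~\ref{U extension} is precisely the point: $\wp$ is a height one prime of $\wh R_P$, and $\wh R_\wp$ is the completion of the localization of $\wh R_P$ at $\wp$, so $F_\wp$ genuinely is the completion of $F_P$ at $v_\wp$ --- that is why the approximation argument runs on the $P$-side in Lemma~\ref{U extension}. On the $U$-side there is only an inclusion $\wh R_U\hookrightarrow\wh R_\wp$: the residue field of $\wh R_\wp$ is $\kappa(\wp)$, the \emph{completion} of $k(U)$ at the branch, whereas any valuation on $F_U$ restricting from $v_\wp$ has residue field contained in $k(U)$ (and completion cannot enlarge the residue field). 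Concretely, in the $t$-adic (i.e.\ $\wp$-adic) topology the residue field is discrete, so density of $F_U$ in $F_\wp$ would force the image of $\wh R_U$ to surject onto $\kappa(\wp)$, which it does not ($k[U]\subsetneq\kappa(\wp)$). A telltale symptom is that your argument would make the unramifiedness hypothesis on $L_P/\wh R_P$ essentially unused, whereas it is indispensable.

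The correct route (the one the paper takes) is to descend to the reduced closed fiber: from the \'etale $\wh R_\wp$-algebras $S_\wp=S_P\otimes_{\wh R_P}\wh R_\wp$ one gets \'etale $\kappa(\wp)$-algebras $\lambda_\wp$, and Lemma~\ref{branch to point} is applied to $E=k(U)$ with completions $\kappa(\wp)$ (which \emph{is} a legitimate instance) to produce $\Lambda_U/k(U)$. One then normalizes $k[U]$ in $\Lambda_U$, lifts the resulting algebra to $\wh R_U$, and checks compatibility on the branches using the uniqueness of \'etale lifts over the complete ring $\wh R_\wp$ (SGA1, Th\'eor\`eme~I.5.5) --- this is exactly where the unramifiedness of $S_P$ over $\wh R_P$ enters, since both $A_U\otimes_{\wh R_U}\wh R_\wp$ and $S_\wp$ must be \'etale lifts of $\lambda_\wp$ for the comparison to apply. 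Your final patching step and the field assertion are fine, but the construction of the $L_U$ needs to be repaired along these lines.
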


\begin{proof}
For each $P \in \mc P$, the normalization $S_P$ of $\wh R_P$ in $L_P$ is a degree $d$ \'etale $\wh R_P$-algebra. It induces an \'etale $\wh R_\wp$-algebra $S_\wp$ for each branch $\wp$ at $P$; and those in turn induce degree $d$ \'etale 
algebras $L_\wp$ over $F_\wp$ and $\lambda_\wp$ over $\kappa(\wp)$, where $\kappa(\wp)$ is the residue field at $\wp$.  The branches $\wp$ on $U$ define distinct non-trivial discrete valuations on the function field $k(U)$ of $U$, with completions $\kappa(\wp)$.  Applying Lemma~\ref{branch to point}, we obtain a degree $d$ \'etale algebra $\Lambda_U$ over $k(U)$ such that $\Lambda_U \otimes_{k(U)} \kappa(\wp) \cong \lambda_\wp$ for all branches $\wp$ on $U$.  The normalization of $k[U]$ in $\Lambda_U$ is a generically \'etale $k[U]$-algebra $\bar A_U$ that induces $\Lambda_U$ over $k(U)$. 
By lifting the defining coefficients of $\bar A_U$ from $k[U]$ to $\wh R_U$, we obtain a generically \'etale $\wh R_U$-algebra $A_U$ whose reduction is $\bar A_U$.   
The algebra $A_U$ induces $S_\wp$ over $\wh R_\wp$, because both 
$A_U \otimes_{\wh R_U} \wh R_\wp$ and $S_\wp$
lift the \'etale $\kappa(\wp)$-algebra $\lambda_\wp$, and that lift is unique by \cite[Th\'eor\`eme~I.5.5]{SGA1}.  Thus $L_U := A_U \otimes_{\wh R_U} F_U$ is an \'etale algebra over $F_U$ that induces $L_\wp := S_\wp \otimes_{\wh R_\wp} F_\wp$ over $F_\wp$.  

Thus we have \'etale algebras $L_P$ over $F_P$ for each $P \in \mc P$ and $L_U$ over $F_U$ for each $U \in \mc U$ such that $L_P$ and $L_U$ induce the same $F_\wp$-algebra $L_\wp$ for $\wp$ a branch at $P$ on $U$.  
By the patching result \cite[Theorem~7.1(iii)]{HH} (in the context of \cite[Theorem~6.4]{HH} and \cite[Proposition~3.3]{HHK:H1}), 
there is an \'etale algebra $L$ over $F$ that induces $L_U$ over $F_U$ for all $U \in \mc U$ and induces $L_P$ over $F_P$ for all $P \in \mc P$.
This yields the main assertion, and the 
final assertion of the lemma is clear.
\end{proof}

\section{Splitting unramified cohomology classes}

In order to prove the main theorem, we will reduce to the case of unramified classes. Let~$L$ be a field. For every discrete valuation $v$ of~$L$, we let $\kappa(v)$ denote its residue field. Recall that for a prime $\ell\neq \operatorname{char}(\kappa(v))$
and $i\ge 1$, 
there is a residue homomorphism $\res_v: H^i(L, \mu_\ell^{\otimes i - 1})\rightarrow H^{i-1}(\kappa(v), \mu_\ell^{\otimes i - 2})$; 
e.g., see \cite[Section~II.7.9, p.~18]{GMS}. A class $\alpha \in H^i(L, \mu_\ell^{\otimes i - 1})$ is called {\em unramified at $v$} if $\res_v(\alpha)=0$. If $\ms Y$ is a regular integral scheme with function field $L$ and $\ms Y^{(1)}$ is the set of codimension one points of~$\ms Y$, then every $y\in \ms Y^{(1)}$ defines a discrete valuation $v_y$ of $L$. We say that $\alpha$ as above is {\em unramified at $y$} if $\res_{v_y}(\alpha)=0$. It is {\em unramified on $\ms Y$} if it is unramified at all points of $\ms Y^{(1)}$; and we write $H^i(L,\mu_\ell^{\otimes i - 1})^{nr,\ms Y}$ for the subgroup of $H^i(L,\mu_\ell^{\otimes i - 1})$ consisting of these unramified classes. 

\begin{lem}\label{U_unramified}
With notation as above and $U\in \mc U$, let $\alpha \in H^i(F_U, \mu_\ell^{\otimes i - 1})$ be unramified on $\operatorname{Spec}(\wh R_U)$.
Then for some nonempty affine open subset $U'\subseteq U$, $\alpha_{F_{U'}}$ is in the image of $H^i(\wh R_{U'}, \mu_\ell^{\otimes i - 1})\rightarrow H^i(F_{U'}, \mu_\ell^{\otimes i - 1})$.
\end{lem}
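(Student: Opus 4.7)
The plan is to extend $\alpha$ from $\Spec F_U$ to a large open subscheme $W \subseteq \Spec \wh R_U$ whose complement has codimension at least two, and then to find a nonempty affine open $U' \subseteq U$ so that the natural morphism $\Spec \wh R_{U'} \to \Spec \wh R_U$ factors through $W$; the desired lift of $\alpha_{F_{U'}}$ is then the pullback of the extended class. The first step uses purity at codimension one together with the unramified hypothesis, while the second exploits the fact (built into the patching framework) that closed points of $\Spec \wh R_U$ correspond bijectively to closed points of $U$.

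For the first step, I would use that $\wh R_U$ is a noetherian domain with fraction field $F_U$ and that étale cohomology commutes with filtered colimits, to produce a nonzero $g \in \wh R_U$ such that $\alpha$ lifts to $H^i(\wh R_U[1/g],\mu_\ell^{\otimes i-1})$. The only codimension-one points of the two-dimensional scheme $\Spec \wh R_U$ at which $\alpha$ might fail to extend are then the finitely many generic points $y_1,\dots,y_r$ of $V(g)$. At each $y_j$, regularity of $\wh R_U$ in codimension one (inherited from the regular model $\ms X$) makes $(\wh R_U)_{y_j}$ a DVR, and the hypothesis $\res_{v_{y_j}}(\alpha)=0$ combined with the DVR case of absolute purity
\[
0 \to H^i\bigl((\wh R_U)_{y_j},\mu_\ell^{\otimes i-1}\bigr) \to H^i(F_U,\mu_\ell^{\otimes i-1}) \xrightarrow{\res_{v_{y_j}}} H^{i-1}\bigl(\kappa(y_j),\mu_\ell^{\otimes i-2}\bigr)
\]
yields an extension of $\alpha$ across $y_j$. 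Choosing for each $j$ a principal affine open $V_j \subseteq \Spec \wh R_U$ that contains $y_j$ and avoids the codimension-two components of $V(g)$, a Mayer--Vietoris (or Gysin) argument glues the extensions on $\Spec \wh R_U[1/g]$ and the $V_j$ into an extension of $\alpha$ to an open $W \subseteq \Spec \wh R_U$ whose complement consists of finitely many closed points $P_1,\dots,P_s$.

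Since $\wh R_U$ is $I$-adically complete, every maximal ideal of $\wh R_U$ contains $I$, so the closed points of $\Spec \wh R_U$ are in bijection with the closed points of $U = \Spec(\wh R_U/I)$; let $\bar P_1,\dots,\bar P_s \in U$ be the images of the $P_i$. I would then choose a nonempty affine open $U' \subseteq U$ disjoint from $\{\bar P_1,\dots,\bar P_s\}$. Applying the same bijection to $\wh R_{U'}$, each closed point of $\Spec \wh R_{U'}$ maps to a closed point of $U'$, hence into $U \smallsetminus\{\bar P_i\}$; since morphisms of schemes respect specialization and every point of the noetherian scheme $\Spec \wh R_{U'}$ specializes to a closed point, no point at all of $\Spec \wh R_{U'}$ can map to the finite closed set $\{P_1,\dots,P_s\}$. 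Thus $\Spec \wh R_{U'} \to \Spec \wh R_U$ factors through $W$, and pulling back the extended class gives the required preimage in $H^i(\wh R_{U'},\mu_\ell^{\otimes i-1})$ of $\alpha_{F_{U'}}$.

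The main obstacle will be the purity/gluing step: one needs to be careful about the regularity of $\wh R_U$ at codimension-one primes (beyond those coming from the closed fiber) and about the precise mechanism — Bloch--Ogus/Gersten, Gysin, or Mayer--Vietoris — by which the local extensions at the $y_j$ are assembled with the extension on $\Spec \wh R_U[1/g]$ into a single class on $W$. A secondary subtlety is the final factorization claim, which rests on the standard identification of closed points of $\Spec \wh R_U$ and $\Spec \wh R_{U'}$ with closed points of $U$ and $U'$ respectively.
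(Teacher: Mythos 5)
Your route is genuinely different from the paper's, and it can be made to work, but the step you flag as the main obstacle really is the crux, and it is exactly what the paper's argument is designed to avoid. The paper never does two-dimensional purity: it forms $R_\eta^h=\varinjlim_{V\subseteq U}\wh R_V$, which by \cite[Lemma~3.2.1]{Hi} is a \emph{henselian} discrete valuation ring with residue field $k(U)$ and fraction field $\varinjlim F_V$; unramifiedness at the single ``vertical'' codimension-one point $V(I)$ then lifts $\alpha_{F_\eta^h}$ to $H^i(R_\eta^h,\mu_\ell^{\otimes i-1})$ via the classical sequence for henselian DVRs (\cite{CTSB}, \S3.3), and the limit theorem \cite[Theorem~09YQ]{stacks-project} descends both the lift and the equality of classes to a finite stage $\wh R_{U'}$. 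Notice that this uses the hypothesis only at one codimension-one point, whereas your argument needs the residues to vanish at every $y_j$.

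Concerning your construction: the gluing does not follow from a general ``unramified classes extend'' principle on the two-dimensional scheme $W$ (that statement is obstructed by a $d_2$-differential in the coniveau spectral sequence), so you must really run Mayer--Vietoris, and for that the classes $\beta_0$ on $D(g)$ and $\beta_j$ near $y_j$ have to agree on the actual overlap, not merely in $H^i(F_U,\mu_\ell^{\otimes i-1})$. This can be arranged by shrinking: since $g\in\mf p_{y_j}$, one has $\varinjlim_{V'\ni y_j}H^i\bigl(V'\cap D(g),\mu_\ell^{\otimes i-1}\bigr)=H^i(F_U,\mu_\ell^{\otimes i-1})$, so the difference dies on a small enough overlap, and one iterates over the finitely many $y_j$. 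You also tacitly need $\wh R_U$ to be Noetherian and regular, so that the $(\wh R_U)_{y_j}$ are DVRs and purity applies to them (not only to henselian DVRs); this holds because the completions of $\wh R_U$ at its maximal ideals are the regular rings $\wh R_P$, but it is an input from the patching framework, not automatic. Your final factorization step (closed points of $\Spec\wh R_{U'}$ lie over closed points of $U'$ by $I$-adic completeness, and every point specializes to a closed point) is correct. In sum: a valid alternative that proves slightly more than needed, at the cost of two-dimensional geometry and a delicate gluing that the paper's one-dimensional limit argument sidesteps entirely.
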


\begin{proof}
Let $R_\eta^h:=\varinjlim_{V\subseteq U} \wh R_{V}$ (varying over the nonempty open subsets $V\subseteq U$), and let $F_\eta^h$ be its fraction field. Then by \cite{Hi}, Lemma~3.2.1, $R_\eta^h$ is a henselian discrete valuation ring with residue field $k(U)$, and $F_\eta^h=\varinjlim_{V\subseteq U} F_{V}$. Since  $\alpha$ is unramified, so is its image $\alpha_{F_\eta^h}$. Thus by \cite{CTSB}, beginning of Section~3.3, $\alpha_{F_\eta^h}$ is the image of some $\til{\alpha}\in H^i(R_\eta^h, \mu_\ell^{\otimes i-1})$. According to \cite{stacks-project},  Theorem~09YQ,  $H^i(R_\eta^h, \mu_\ell^{\otimes i-1})=\varinjlim_{V\subseteq U} H^i(\wh R_V, \mu_\ell^{\otimes i-1})$ and $H^i(F_\eta^h, \mu_\ell^{\otimes i-1})=\varinjlim_{V\subseteq U} H^i(F_V, \mu_\ell^{\otimes i-1})$. In particular, there is some nonempty open  subset $V\subseteq U$ so that $\til{\alpha}$ is the image of an element $\til{\alpha}'\in H^i(\wh R_V,\mu_\ell^{\otimes i-1})$. The classes $\alpha_{F_V}$ and $\til{\alpha}'_{F_V}$ then have the same image in $H^i(F_\eta^h,\mu_\ell^{\otimes i-1})$ by construction. Again by Lemma~3.2.1 of \cite{Hi}, $F_\eta^h=\varinjlim_{W\subseteq V} F_W$, and thus there exists a $U'\subseteq V$ for which $\alpha_{F_{U'}}=\til{\alpha}'_{F_{U'}}$. But then $U'$ is as desired.
\end{proof}

This next result gives a bound on the index in the case of unramified cohomology classes.

\begin{prop}\label{unramified}
Let $K$ be a complete discretely valued field with residue field~$k$, let $\ell$ be a prime unequal to the characteristic of $k$, let $F$ be the function field of a $K$-curve, and   
let $\ms X$ be a regular model of $F$. Let $i \ge 1$.
\renewcommand{\theenumi}{\alph{enumi}}
\renewcommand{\labelenumi}{(\alph{enumi})}
\begin{enumerate}
\item \label{one} If $\alpha\in H^i(F, \mu_\ell^{\otimes i - 1})$ is unramified on $\ms X$, then
$$\ind(\alpha)\mid \ell^{\stspdim{i}{\ell}{k}+ \stspdim{i}{\ell}{k(x)}}.$$
\item \label{two} If $B\subseteq H^i(F, \mu_\ell^{\otimes i - 1})$ is a finite collection of cohomology classes that are unramified on $\ms X$,
then $$\ind(B)\mid \ell^{\gstspdim{i}{\ell}{k}+ \gstspdim{i}{\ell}{k(x)}}.$$
\end{enumerate}
\end{prop}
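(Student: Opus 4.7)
The plan is to construct an \'etale $F$-algebra $L$ of degree $d = \ell^{a+b}$, with $a := \stspdim{i}{\ell}{k}$ and $b := \stspdim{i}{\ell}{k(x)}$, that splits $\alpha$, by placing the problem in the patching framework of Section~3 and closing with a local-global principle. First I pick a regular model $\ms X$ with $\mc P$ containing the singular points of $\ms X_k^{\operatorname{red}}$, and I use Lemma~\ref{U_unramified} together with the hypothesis that $\alpha$ is unramified on $\ms X$ to shrink each $U \in \mc U$, absorbing any newly exposed closed points into $\mc P$, so that every $\alpha_{F_U}$ lifts to a class $\til\alpha_U \in H^i(\wh R_U, \mu_\ell^{\otimes i-1})$.

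Next I construct local splitting data of common degree $d$ at every patch. For $U \in \mc U$, rigidity of $\ell$-torsion \'etale cohomology along the $t$-adic residue $\wh R_U \to k[U]$ reduces splitting $\til\alpha_U$ to splitting its generic-fibre residue $\bar\alpha_U \in H^i(k(U), \mu_\ell^{\otimes i-1})$. Since $k(U)/k$ has transcendence degree one, $k(U)$ is a finite extension of $k(x)$, so the bound $b$ gives a field extension of $k(U)$ of degree dividing $\ell^b$ that splits $\bar\alpha_U$; padding with trivial factors, spreading out to a small affine open $U' \subseteq U$, and lifting via the $t$-adic henselianness of $\wh R_U$ (\cite[Th\'eor\`eme~I.5.5]{SGA1}) produces an \'etale $\wh R_U$-algebra $A_U$ of degree $d$ such that $L_U := A_U \otimes_{\wh R_U} F_U$ splits $\alpha_{F_U}$. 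For $P \in \mc P$, absolute cohomological purity on the regular two-dimensional local scheme $\Spec(\wh R_P)$ lifts $\alpha_{F_P}$ to a class in $H^i(\wh R_P, \mu_\ell^{\otimes i-1}) \cong H^i(\kappa(P), \mu_\ell^{\otimes i-1})$, the isomorphism by proper base change for the complete local ring. Since $\kappa(P)/k$ is finite, the residue $\bar\alpha_P$ is split by a field extension of degree dividing $\ell^a$, and the analogous padding and henselian lifting produce an unramified \'etale $\wh R_P$-algebra $S_P$ of degree $d$ whose generic fibre $L_P := S_P \otimes_{\wh R_P} F_P$ splits $\alpha_{F_P}$.

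The local algebras need not agree on overlaps, so I reconcile: for each $P$, Lemma~\ref{branch to point} applied to the finitely many branches at $P$ (viewed as discrete valuations on $F_P$) produces an \'etale $F_P$-algebra $L_P'$ of degree $d$ with $L_P' \otimes_{F_P} F_\wp \cong L_U \otimes_{F_U} F_\wp$ for every branch $\wp$ at $P$ on some $U$. The compatible system $\{L_U, L_P'\}$ patches by \cite[Theorem~7.1(iii)]{HH} to an \'etale $F$-algebra $L$ of degree $d$ with $L \otimes_F F_U \cong L_U$ and $L \otimes_F F_P \cong L_P'$. Because each $L_U$ splits $\alpha_{F_U}$, the base change $\alpha_L$ becomes trivial over every $L \otimes_F F_U$ and over every overlap $L \otimes_F F_\wp$; the local-global principle of \cite{Hi}, applied to $\alpha_L$ on the semiglobal field $L$, then forces $\alpha_L = 0$, so $L$ splits $\alpha$ and $\ind(\alpha) \mid d$. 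Part (b) follows from the same argument run uniformly over the finite set $B$, with $\stspdim$ replaced by $\gstspdim$ throughout.

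The main obstacle is the last step: after the reconciliation the new algebras $L_P'$ need not directly split $\alpha_{F_P}$, so one cannot verify global splitting at the $P$'s pointwise and must leverage the local-global principle to convert the surviving splittings at $\mc U$ and along the branches into global splitting. Verifying that Hi's principle applies in precisely the form required---on the semiglobal field $L$, with the splitting hypothesis checked only at the $U$-type patches of $L$---is the delicate ingredient; the remaining pieces (the unramified refinement from Lemma~\ref{U_unramified}, the rigidity and purity identifications at the rings $\wh R_\xi$, and patching of \'etale algebras) are comparatively standard.
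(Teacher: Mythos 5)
Your local constructions at the patches follow the paper's proof closely (Lemma~\ref{U_unramified} plus Gabber's affine base change over the $\wh R_U$, purity plus proper base change over the $\wh R_P$, padding to a common degree). The gap is in the global assembly. After you replace each $L_P$ by the reconciled algebra $L_P'$ produced by Lemma~\ref{branch to point} from the branch data of the $L_U$'s, you have no control over whether $\alpha$ splits over the field components of $L_P' \otimes_{F_P} F_P$ --- and you then ask the local-global principle of \cite{Hi} to work ``with the splitting hypothesis checked only at the $U$-type patches.'' It does not: Theorem~3.1.5 of \cite{Hi} requires triviality over $L_\xi$ for \emph{all} $\xi \in \mc P_L \cup \mc U_L$, points as well as components, and triviality over the $F_U$'s and the overlaps $F_\wp$ does not imply triviality over the $F_P$'s (the Mayer--Vietoris sequence underlying the principle gives no such implication). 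So the step you yourself flag as ``the delicate ingredient'' is in fact the missing step, and as written the argument does not close. A secondary symptom: the point-side algebras $L_P$ that justify the factor $\ell^{a}$ in your degree bound are discarded and never enter the final patching, so the claimed exponent $a+b$ is not actually accounted for.

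The paper's resolution is a tensor-product trick that you should adopt. One builds \emph{two} \'etale $F$-algebras: $L_1$ of degree $d_1$ (with $v_\ell(d_1) \le \stspdim{i}{\ell}{k(x)}$) inducing the prescribed splitting algebras over every $F_U$ via Lemma~\ref{U extension}, and $L_2$ of degree $d_2$ (with $v_\ell(d_2) \le \stspdim{i}{\ell}{k}$) inducing the prescribed splitting algebras over every $F_P$ via Lemma~\ref{point extension} (which is exactly the device for extending unramified point data to a global algebra without sacrificing the splitting at the points). A field direct summand $L$ of $L_1 \otimes_F L_2$ of degree with $\ell$-adic valuation at most $v_\ell(d_1)+v_\ell(d_2)$ then has the property that each field component of $L \otimes_F F_U$ contains a splitting field coming from $L_1$, and each field component of $L \otimes_F F_P$ contains one coming from $L_2$; hence $\alpha$ dies over \emph{every} patch of $L$, and only then does \cite[Theorem~3.1.5]{Hi} apply. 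With that replacement for your reconciliation step, the rest of your argument (and the reduction of part (a) and part (b) to the same construction with $\stspdim{i}{\ell}{-}$ versus $\gstspdim{i}{\ell}{-}$) goes through.
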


\begin{proof}
Both assertions are trivially true for $i=1$, by the paragraph following Theorem~\ref{main thm}. So we assume $i>1$ from now on. 

We start by proving part~(\ref{two}).
Let $B=\{\alpha_j \,|\, j\in J\}$ for some finite index set $J$.
By Lemma~\ref{l-power}, it is sufficient to show that there is a finite field extension $L/F$ that splits all classes in~$B$ and such that the $\ell$-adic valuation of $[L:F]$ is at most $\gstspdim{i}{\ell}{k}+\gstspdim{i}{\ell}{k(x)}$. 
Let $\mc P$ be a finite nonempty subset of the closed fiber containing all the singular points of $\ms X_k^{\operatorname{red}}$, and let $\mc U$ be the set of components of the complement $\ms X_k^{\operatorname{red}}\smallsetminus \mc P$.

Fix $U \in \mc U$. After deleting finitely many points from $U$ and adding those to $\mc P$, we may assume that each $(\alpha_j)_{F_{U}}$ is the image of some $\til{\alpha}_j\in H^i(\wh R_{U}, \mu_\ell^{\otimes i - 1})$, by Lemma~\ref{U_unramified}. This gives
$$H^i(\wh R_{U}, \mu_\ell^{\otimes i - 1})\cong H^i(U, \mu_\ell^{\otimes i - 1})\rightarrow H^i(k(U), \mu_\ell^{\otimes i - 1}),\quad \til{\alpha}_j\mapsto \bar\alpha_j,$$
where the isomorphism is by Gabber's affine analog of proper base change (\cite{stacks-project}, Theorem~09ZI). By definition of the generalized stable splitting dimension, there exists a finite field extension $l_U$ of $k(U)$ that splits all $\bar\alpha_j$ and so that the $\ell$-adic valuation of $[l_U:k(U)]$ is at most
$\gstspdim{i}{\ell}{k(x)}$. Let $l_U'$ be the separable closure of $k(U)$ in $l_U$. Then since $[l_U:l_U']$ is a power of $\operatorname{char}(k)$ and thus prime to $\ell$, the separable extension $l_U'$ also splits all $\bar\alpha_j$ (see Remark~\ref{rescores}). Let  $V\rightarrow U$ be the normalization of $U$ in $l_U'$, so that  $l_U'=k(V)$. Hence each $\til{\alpha}_j$ maps to zero under the composition
$$H^i(\wh R_{U}, \mu_\ell^{\otimes i - 1})\cong H^i(U, \mu_\ell^{\otimes i - 1})\rightarrow H^i(k(U), \mu_\ell^{\otimes i - 1})\rightarrow H^i(k(V), \mu_\ell^{\otimes i - 1}).$$

The collection of $V\times_U U'$, where $U'$ ranges over the non-empty open subsets of $U$, is cofinal in the collection of non-empty open subsets $V'\subseteq V$.  So by \cite[Theorem~09YQ]{stacks-project}, 
\[H^i(k(V), \mu_\ell^{\otimes i - 1})=\varinjlim_{V'\subseteq V} H^i(V',  \mu_\ell^{\otimes i - 1}) =\varinjlim_{U'\subseteq U} H^i(V\times_UU',\mu_\ell^{\otimes i - 1}).\]
Hence there exists some $U'\subseteq U$ for which each $\til{\alpha}_j$ maps to zero in $H^i(V\times_UU',\mu_\ell^{\otimes i - 1})$. Since $k(V)/k(U)$ is separable, $V \to U$ is generically \'etale. Possibly after shrinking $U'$, we may assume that $V\times_U U'\rightarrow U'$ is finite \'etale. Let $I$ be the ideal defining $U'$ in $\operatorname{Spec}(\wh R_{U'})$. Then $(\wh R_{U'},I)$ is a henselian pair, so  $V\times_U U'\rightarrow U'$ is the closed fiber of a finite  \'etale cover $\operatorname{Spec}(S_{U'})\rightarrow \operatorname{Spec}(\wh R_{U'})$ of the same degree by \cite{stacks-project}, Lemma 09XI. Note that $\operatorname{Spec}(S_{U'})$ is reduced and irreducible since $V$ is, and hence $S_{U'}$ is an integral domain. The commutative diagram
\[\begin{tikzcd}
H^i(\wh R_U, \mu_\ell^{\otimes i - 1})\arrow{r}&H^i(\wh R_{U'}, \mu_\ell^{\otimes i - 1})\arrow{r}{\cong}\arrow{d}
&H^i(U', \mu_\ell^{\otimes i - 1})\arrow{d}\\
&H^i(S_{U'}, \mu_\ell^{\otimes i - 1})
\arrow{r}{\cong}
&H^i(V\times_U U', \mu_\ell^{\otimes i - 1})
\end{tikzcd}\]
then shows that each $\til{\alpha}_j$ maps to zero in $H^i(S_{U'}, \mu_\ell^{\otimes i - 1})$; hence all $\alpha_j$ are split by the fraction field $E_{U'}$ of $S_{U'}$, which is an extension of $F_{U'}$ whose degree has $\ell$-adic valuation at most $\gstspdim{i}{\ell}{k(x)}$.
(Here the isomorphisms in the diagram are -- again -- by Gabber's affine analog of proper base change, \cite[Theorem~09ZI]{stacks-project}.) Note that each $U'$ was obtained by removing a finite number of closed points from the corresponding $U\in \mc U$. We add those points to $\mc P$ and replace $\mc U$ with the set of components of the complement in $\ms X_k^{\operatorname{red}}$ of this possibly enlarged set $\mc P$ (the elements of this new set $\mc U$ are exactly the sets $U'$). Let $d_1$ be the least common multiple of the degrees $[E_{U'}:F_{U'}]$ 
where $U'$ is in the (new) set $\mc U$.  
Thus the $\ell$-adic valuation of $d_1$ is at most $\gstspdim{i}{\ell}{k(x)}$.
By taking direct sums of an appropriate number of copies of $E_{U'}$ for each such $U'$, we obtain \'etale $F_{U'}$-algebras $L_{U'}$ for all $U'$ of degree $d_1$.
Then by Lemma~\ref{U extension}, there is an \'etale $F$-algebra $L_1$ of degree~$d_1$ so that $L_1\otimes_F F_{U'}\cong L_{U'}$ for all $U'\in \mc U$.

For $P\in \mc P$, each class $\alpha_{j,P}:=(\alpha_j)_{F_P}$ is unramified on $\operatorname{Spec}(\wh R_P)$, since each $\alpha_j$ is unramified. Thus by \cite{Sakagaito}, Theorem~9, we may lift each $\alpha_{j,P}$ to a class in $H^i(\wh R_P, \mu_\ell^{\otimes i - 1})$; that group is isomorphic to $H^i(\kappa(P),\mu_\ell^{\otimes i - 1})$ by proper base change (\cite{SGA4}, Exp. XII, Corollaire~5.5). By definition of the generalized stable splitting dimension, we may find a common splitting field $l_P/\kappa(P)$ for the images of the $\alpha_{j,P}$, so that $[l_P: \kappa(P)]$ has  $\ell$-adic valuation at most $\gstspdim{i}{\ell}{k}$. As in the previous part, we may assume that $l_P/\kappa(P)$ is separable. By \cite{SGA1}, Theorem~I.6.1, the extension lifts to a finite \'etale $\wh R_P$-algebra $S_P$ of the same degree (using the completeness of $\wh R_P$). Note that again by proper base change (loc. cit.), all $\alpha_{j,P}$ split over $S_P$. Since $\wh R_P$ is a regular local domain,
and since $S_P$ is finite \'etale over $\wh R_P$ and lifts $l_P$, $S_P$ is a regular local domain. Its fraction field is a finite extension $E_P/F_P$  of the same degree, which splits all $\alpha_{j,P}$. Let $d_2$ be the least common multiple of the degrees $[L_P:F_P]$. By taking direct sums of an appropriate number of copies of $E_P$ for each $P\in \mc P$, we obtain \'etale $F_P$-algebras $L_P$ (for all $P$) of degree $d_2$ which has $\ell$-adic valuation at most $\gstspdim{i}{\ell}{k}$. Then by Lemma~\ref{point extension}, there is an \'etale $F$-algebra $L_2$ of degree~$d_2$ so that $L_2\otimes_F F_P\cong L_P$ for all $P\in \mc P$.

Consider the tensor product $L_1\otimes_F L_2$; this is a direct sum of finite field extensions of $F$ since each $L_i$ is an \'etale $F$-algebra. Since the $\ell$-adic valuation of the degree of $L_1\otimes_F L_2$ is at most $\gstspdim{i}{\ell}{k(x)}+ \gstspdim{i}{\ell}{k}$, the same is true for at least one of the direct summands, say~$L/F$.
Let $\ms X_L$ be the normalization of $\ms X$ in $L$, let $\mc P_L$ be the preimage of $\mc P$ under the natural map $\ms X_L\rightarrow \ms X$, and let $\mc U_L$ be the set of connected components of the complement of $\mc P_L$ in the reduced closed fiber of $\ms X_L$.
For each $P\in \mc P$, $L\otimes_F F_P$ is the direct product of the fields $L_{P'}$, where $P'$ runs over the points of $\mc P_L$ that map to $P$  and $L_{P'}$ is the fraction field of the complete local ring of $\ms X_L$ at $P'$; similarly for each $U\in \mc U$. Hence all $(\alpha_j)_{L_\xi}$ are split for every $\xi \in \mc P_L \cup \mc U_L$. By Theorem~3.1.5 of \cite{Hi}, all
$\alpha_j$ are split over $L$.  This completes the proof of part~(\ref{two}).

For part~(\ref{one}), note that if $\alpha$ is a single class unramified on a regular model $\ms X$, 
then for splitness over each $U\in \mc U$ (resp.\ $P\in \mc P$), it suffices to take an extension whose degree has $\ell$-adic valuation at most $\stspdim{i}{\ell}{k(x)}$ (resp.\ $\stspdim{i}{\ell}{k}$), by definition of the stable splitting dimension. Hence the above proof yields a splitting field $L$ for $\alpha$ whose degree over $F$ has $\ell$-adic valuation at most $\stspdim{i}{\ell}{k}+\stspdim{i}{\ell}{k(x)}$.
Since $\ind(\alpha)$ is an $\ell$-power by Lemma~\ref{l-power}, this implies
$$\ind(\alpha)\mid \ell^{\stspdim{i}{\ell}{k}+ \stspdim{i}{\ell}{k(x)}}$$
as we intended to show.
\end{proof}

\begin{rem}
If $k$ is finite in the context of Proposition~\ref{unramified}, then the group $H^i(F, \mu_\ell^{\otimes i-1})^{nr,\ms X}$ vanishes for all $i>1$.  This follows from \cite[Th\'eor\`eme~III.3.1, Corollaire~II.1.10]{GpBr} for $i=2$; from \cite[Proposition~5.2]{Kato} for $i=3$; and because $\cd(F)=3$ for $i \ge 4$.  Thus the proposition applies only to the zero class in these situations, and so it has no actual content there.  (In the case of $i=1$, as noted at the beginning of the above proof, the assertion of Proposition~\ref{unramified} is trivial for an arbitrary residue field $k$.)
\end{rem}

\section{Proof of the main theorem} \label{main thm sect}

We are now in a position to prove the main theorem.

\begin{proof}[Proof of Theorem~\ref{main thm}]
We first prove the second assertion. Let $B \subseteq H^i(F, \mu_\ell^{\otimes i - 1})$ be a finite collection of cohomology classes, and choose a regular model $\ms X$ of $F$.  By \cite{gosavi2019generalized}, Prop.~3.1, there is a field extension $L/F$ of degree $\ell^2$ (resp. $2^3=8$) for $\ell$ odd (resp. $\ell=2$) that splits the ramification of $B$ with respect to all discrete valuations on $L$ whose restriction to $F$ has a center on $\ms X$.  
The extension $L/F$ corresponds to a morphism $\ms Y \to \ms X$ for some regular model $\ms Y$ of $L$; and  $\alpha_L \in H^i(L, \mu_\ell^{\otimes i-1})^{nr,\ms Y}$ for every $\alpha \in B$.
By Proposition~\ref{unramified}(b), there exists a finite field extension $\til L/L$  that splits all elements of~$B$ and so that $[\til{L}:L]$ has $\ell$-adic valuation at most $ \gstspdim{i}{\ell}{k}+\gstspdim{i}{\ell}{k(x)}$. Thus the $\ell$-adic valuation of $[\til L:F]$ is at most
$$\gstspdim{i}{\ell}{k}+ \gstspdim{i}{\ell}{k(x)}+\begin{cases}2 &\text{if $\ell$ is odd}\\3&\text{if $\ell=2$.}\end{cases}$$

To bound the generalized stable splitting dimension, we also need to consider cohomology classes defined over finite field extensions $E/F$. Each such $E$ is the function field of a curve over $K_E$, where $K_E$ is some finite extension of~$K$ and hence is a complete discretely valued field whose residue field $k'$ is a finite extension of $k$. Now if  $B \subseteq H^i(E, \mu_\ell^{\otimes i - 1})$ is a finite collection of cohomology classes, the first part of the proof shows the existence of a common splitting field $L/E$ for the elements of $B$ whose degree $[L:E]$ has $\ell$-adic valuation at most
\begin{align*}&\gstspdim{i}{\ell}{k'}+ \gstspdim{i}{\ell}{k'(x)}+\begin{cases}2 &\text{if $\ell$ is odd}\\3&\text{if $\ell=2$}\end{cases}\\
&\leq \gstspdim{i}{\ell}{k}+ \gstspdim{i}{\ell}{k(x)}+\begin{cases}2 &\text{if $\ell$ is odd}\\3&\text{if $\ell=2$,}\end{cases}\end{align*}
which proves the desired bound for $\gstspdim{i}{\ell}{F}$.

If $B=\{\alpha\}$ is a one element set, Proposition~\ref{unramified}(a) gives
$\ind(\alpha_L)\mid \ell^{\stspdim{i}{\ell}{k}+ \stspdim{i}{\ell}{k(x)}}$, and hence
$\ind(\alpha)\mid \ell^{m}$ where $$m=\stspdim{i}{\ell}{k}+ \stspdim{i}{\ell}{k(x)}+\begin{cases}2 &\text{if $\ell$ is odd}\\3&\text{if $\ell=2$.}\end{cases}$$ Since $\alpha$ was arbitrary, this shows that
$$\spdim{i}{\ell}{F}\leq \stspdim{i}{\ell}{k}+ \stspdim{i}{\ell}{k(x)}+\begin{cases}2 &\text{if $\ell$ is odd}\\3&\text{if $\ell=2$.}\end{cases}$$
As before, the same bound applies to finite extensions $E/F$, and hence
$$\stspdim{i}{\ell}{F}\leq \stspdim{i}{\ell}{k}+ \stspdim{i}{\ell}{k(x)}+\begin{cases}2 &\text{if $\ell$ is odd}\\3&\text{if $\ell=2$,}\end{cases}$$
as we wanted to show.
\end{proof}

\section{Bounds for higher rank complete discretely valued fields}

In this section, we bound $\gstspdim{i}{\ell}{F}$ for one-variable function fields $F$ over higher rank complete discretely valued fields -- that is, fields $k_r$ arising in an iterated construction of fields $k_0, k_1, \ldots, k_r$ where $k_j$ is a complete discretely valued field with residue field $k_{j-1}$, for all $j\geq 1$. We will do this using Theorem~\ref{main thm}. We first determine the generalized stable splitting dimension of higher rank complete discretely valued fields.

\begin{lem} \label{discrete index bootstrap}
Let $k$ be a field and let $\ell\neq \cha(k)$ be a prime.
Let $r \ge 0$, and let $k_0, k_1, \ldots, k_r$ be a sequence of fields with $k_0 = k$, and $k_j$ a complete discretely valued field with residue field $k_{j-1}$ for all $j \ge 1$. 
Then for every finite collection $B\subseteq H^i(k_r,\mu_\ell^{\otimes i-1})$, there exists an extension $L/k_r$ of degree dividing $\ell^{\gstspdim{i}{\ell}{k}+r}$ that splits all elements of~$B$. In particular, 
$\gstspdim{i}{\ell}{k_r} \le \gstspdim{i}{\ell}{k} + r$.
The same statements remain true when $B$ is replaced by a single class and $\gstspdim{i}{\ell}{-}$ is replaced with~$\stspdim{i}{\ell}{-}$. 
\end{lem}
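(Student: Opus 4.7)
I will proceed by induction on $r$. The base case $r=0$ reduces directly to the definition of $\gstspdim{i}{\ell}{k}$, combined with Lemma~\ref{l-power} to ensure the splitting field can be chosen of $\ell$-power degree.

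For the inductive step, my strategy is to first kill all residues via a single totally ramified extension of degree $\ell$, and then reduce to the residue field $k_{r-1}$. Fix a uniformizer $t$ of $k_r$ and set $k_r' := k_r(t^{1/\ell})$. Since $\cha(k_r) \in \{0, \cha(k)\}$ and $\ell \neq \cha(k)$, the polynomial $X^\ell - t$ is Eisenstein over $k_r$, so $k_r'/k_r$ is totally ramified of degree $\ell$ with residue field still $k_{r-1}$. Using the standard formula $\res_{k_r'}(\alpha_{k_r'}) = e \cdot \res_{k_r}(\alpha) = \ell \cdot \res_{k_r}(\alpha) = 0$ in the $\ell$-torsion group $H^{i-1}(k_{r-1}, \mu_\ell^{\otimes i-2})$, each class $\alpha_{k_r'}$ becomes unramified.

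I then apply the Witt decomposition theorem (\cite[Corollary~6.8.8]{GSz}) to $k_r'$: each unramified class $\alpha_{k_r'}$ is the canonical inflation of a unique class $\beta_\alpha \in H^i(k_{r-1}, \mu_\ell^{\otimes i-1})$. By the inductive hypothesis applied to the finite set $\{\beta_\alpha : \alpha \in B\}$, there is a finite extension $l/k_{r-1}$ of degree dividing $\ell^{\gstspdim{i}{\ell}{k}+(r-1)}$ that splits every $\beta_\alpha$. Because this degree is an $\ell$-power and $\ell \neq \cha(k_{r-1})$, the extension $l/k_{r-1}$ is separable, and hence lifts to a finite unramified extension $L/k_r'$ of the same degree by \cite[Th\'eor\`eme~I.6.1]{SGA1}. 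Applying Witt decomposition over $L$ (whose residue field is $l$) yields $\alpha_L = 0$ for every $\alpha \in B$, and $[L:k_r] = [L:k_r'] \cdot [k_r':k_r]$ divides $\ell \cdot \ell^{\gstspdim{i}{\ell}{k}+(r-1)} = \ell^{\gstspdim{i}{\ell}{k}+r}$, completing the induction.

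The bound $\gstspdim{i}{\ell}{k_r} \leq \gstspdim{i}{\ell}{k}+r$ then follows by applying what has been proved to any finite extension $E/k_r$: such $E$ is itself a complete discretely valued field sitting atop an analogous tower of length $r$ whose base $k_0'$ is finite over $k$, and the stability of the generalized splitting dimension under finite extensions gives $\gstspdim{i}{\ell}{k_0'} \leq \gstspdim{i}{\ell}{k}$. The argument for the stable splitting dimension with a single class in place of a finite collection is identical. I expect the main technical point requiring care is the compatibility of residues with the ramified base change $k_r'/k_r$, but this is a standard formula in the Galois cohomology of complete discretely valued fields.
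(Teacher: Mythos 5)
Your proof is correct and follows essentially the same route as the paper's: both split the ramification by adjoining an $\ell$-th root of a uniformizer and then split the resulting unramified classes by lifting a separable splitting extension of the residue field to an unramified extension (your Witt-decomposition step plays the role of the paper's use of \cite[Proposition~II.7.11]{GMS} together with proper base change). The only cosmetic difference is the order of operations: you ramify first and then decompose over $k_r'$, invoking the ramification-index formula (the paper's Lemma~\ref{square}) to see the residues die, whereas the paper writes $\alpha = \alpha' + (\pi)\cup\beta$ over $k_r$ first and then adjoins $\pi^{1/\ell}$.
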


\begin{proof}
By induction, it suffices to prove the result with $r=1$. Set $K = k_1$, let $v$ denote the valuation on $K$, and 
let $A$ be its valuation ring, with uniformizer $\pi$.  
By proper base change (\cite{SGA4}, Exp. XII, Corollaire~5.5), for any $m \ge 1$ the mod $\pi$ reduction map 
$H^m(A, \mu_\ell^{\otimes m-1}) \to H^m(k, \mu_\ell^{\otimes m-1})$ is an isomorphism, and so we may identify these two cohomology groups.  Thus
by \cite[Proposition~II.7.11, p.~18]{GMS}, each element $\alpha \in H^i(K, \mu_\ell^{\otimes i - 1})$ may be written in the form $\alpha' + (\pi) \cup \beta$, where $\alpha' \in H^i(A, \mu_\ell^{\otimes i - 1})$; where
$(\pi) \in H^1(K,\mu_\ell)$ is the class defined by $\pi$; and where $\beta \in H^{i-1}(A, \mu_\ell^{\otimes i - 2})$ is the class identified with $\res_v(\alpha) \in H^{i-1}(k, \mu_\ell^{\otimes i - 2})$ via the above isomorphism.
Consequently, if we base change to $\til K = K(\sqrt[\ell]{\pi})$ to split the class $(\pi)$, we find that $(\alpha)_{\til K} = (\alpha')_{\til K}$.

Now let $B=\{\alpha_1,\ldots,\alpha_m\} \subseteq H^i(K, \mu_\ell^{\otimes i - 1})$ be a finite collection, and let $\ov{B} = \{\ov{\alpha'_1}, \ldots, \ov{\alpha'_m}\}$, where $\ov{\alpha_i' }$ denotes the image of $\alpha_i'$ in $H^i(k, \mu_\ell^{\otimes i - 1})$ (and $\alpha_i'$ is associated to $\alpha_i$ as in the first part of the proof). By definition, there exists a splitting field $k'/k$ for $\ov{B}$ of degree dividing $\ell^{\gstspdim{i}{\ell}{k}}$. To prove the first assertion of the lemma, it suffices to show that we may find a splitting field $\til K'/K$ of $B$ whose degree divides $\ell[k':k]$.
By hypothesis on the characteristic, each $\ov{\alpha'_i}$ is also split by the separable closure of $k$ in $k'$ (Remark~\ref{rescores}), and so we may assume without loss of generality that $k'$ is a separable extension of $k$. Consequently, we may lift $k'$ to an unramified extension $A'$ of $A$ of the same degree; let  $K'$ denote the fraction field of $A'$.  Again using proper base change (\cite{SGA4}, Exp.~XII, Corollaire~5.5), the classes $(\alpha'_i)_{A'}$ are split; so it
follows that the classes $(\alpha'_i)_{K'}$ are split as well. Let $\til K'$ be a compositum of $\til K$ and $K'$. Then $(\alpha_i)_{\til K}=(\alpha'_i)_{\til K}=0$. As $[\til K': K] \,|\, \ell [k':k]$, the extension $\til K'/K$ is as desired. The assertion on the generalized stable splitting dimension is an immediate consequence. 

If $B$ consists of a single class, then the extension $k'/k$ in the previous part can be chosen of degree dividing $\ell^{\stspdim{i}{\ell}{k}}$, and this yields the final assertion of the lemma.
\end{proof}

\begin{rem}
The bounds given in the previous lemma are not sharp. For example, consider $k={\mathbb Q}$ and $i=2=\ell$. Given a collection of $2$-torsion Brauer classes, we may find a quadratic extension of ${\mathbb Q}$ which is non-split at every prime where at least one of the corresponding quaternion algebras is ramified. This extension will then split all the classes, so $\gstspdim{2}{2}{{\mathbb Q}}=1$, and 
$\gstspdim{3}{2}{{\mathbb Q}}\leq\gstspdim{2}{2}{{\mathbb Q}}=1$ by Proposition~\ref{inductive}. Since the Pfister form $\langle\langle -1,-1,-1\rangle\rangle$ does not split over ${\mathbb Q}$, $\gstspdim{3}{2}{{\mathbb Q}}=1$.
Lemma~\ref{discrete index bootstrap} then gives $\gstspdim{3}{2}{{\mathbb Q}((t)))}\leq 2$. But more is true: 
since $\gstspdim{2}{2}{{\mathbb Q}}=1$, Lemma~\ref{inductive_residue} implies the stronger assertion that $\gstspdim{3}{2}{{\mathbb Q}((t)))}=1$ (note that the above Pfister form does not split over ${\mathbb Q}((t))$ either).
\end{rem}

\begin{thm} \label{new iterative}
Let $k$ be a field, let $\ell\neq \cha(k)$ be a prime, let
$d = \gstspdim{i}{\ell}{k}$, and let $\delta = \gstspdim{i}{\ell}{k(x)}$.
Suppose we are given a sequence $k = k_0, k_1, \ldots, k_r$ of fields with $k_j$ a complete discretely valued field having residue field $k_{j-1}$ for all $j \ge 1$.
Then
\[\gstspdim{i}{\ell}{F} \leq \begin{cases} \delta + \frac r 2 (r + 2d + 3) &\text{if $\ell$ is odd}\\
\delta + \frac r 2 (r + 2d + 5) & \text{if $\ell=2$}\end{cases}\]
for any one variable function field $F$ over $k_r$.
The same result holds for $\stspdim{i}{\ell}{F}$ when $d$ and $\delta$ are replaced with $\stspdim{i}{\ell}{k}$ and $\stspdim{i}{\ell}{k(x)}$, respectively.
\end{thm}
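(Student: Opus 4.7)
The plan is to induct on $r$, using Theorem~\ref{main thm} to peel off one complete DVF at a time while feeding the bound for $k_{j}(x)$ back into itself at each stage. Observe that $k_{j}(x)$ is a one-variable function field over the complete discretely valued field $k_{j}$, whose residue field is $k_{j-1}$; this is the key observation that makes the $k(x)$-term in Theorem~\ref{main thm} amenable to the same recursion as the $F$-term.

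\smallskip

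For the base case $r = 0$, the field $F$ is a finite extension of $k(x)$, so every finite extension of $F$ is also a finite extension of $k(x)$; hence directly from the definition of generalized stable splitting dimension, $\gstspdim{i}{\ell}{F} \le \gstspdim{i}{\ell}{k(x)} = \delta$, which matches the claimed bound (the sum over an empty range vanishes). For the inductive step, assume the result for all towers of length $r-1$. Given $F$ a one-variable function field over $k_r$, apply the second inequality of Theorem~\ref{main thm} (with $K = k_r$ and residue field $k_{r-1}$) to obtain
$$\gstspdim{i}{\ell}{F} \le \gstspdim{i}{\ell}{k_{r-1}} + \gstspdim{i}{\ell}{k_{r-1}(x)} + \varepsilon,$$
where $\varepsilon = 2$ for odd $\ell$ and $\varepsilon = 3$ for $\ell = 2$. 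Bound the first term by Lemma~\ref{discrete index bootstrap} applied to the sub-tower $k_0, k_1, \ldots, k_{r-1}$, giving $\gstspdim{i}{\ell}{k_{r-1}} \le d + (r-1)$. Bound the second term by the inductive hypothesis applied to the length-$(r-1)$ tower and the one-variable function field $k_{r-1}(x)$ over $k_{r-1}$, giving
$$\gstspdim{i}{\ell}{k_{r-1}(x)} \le \delta + \tfrac{r-1}{2}\bigl((r-1) + 2d + c\bigr),$$
with $c = 3$ for odd $\ell$ and $c = 5$ for $\ell = 2$.

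\smallskip

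Summing the three bounds and simplifying gives exactly the claimed closed form: for odd $\ell$, the increment from level $r-1$ to level $r$ is $(d + r - 1) + \varepsilon = d + r + 1$, which coincides with $\tfrac{r}{2}(r + 2d + 3) - \tfrac{r-1}{2}((r-1) + 2d + 3)$, and similarly for $\ell = 2$ the increment $d + r + 2$ matches the difference of the closed-form expressions with $c = 5$. The proof for $\stspdim{i}{\ell}{F}$ is word-for-word parallel, using the first (single-class) inequality in Theorem~\ref{main thm} and the single-class version of Lemma~\ref{discrete index bootstrap}. There is no conceptual obstacle; the only care needed is in organizing the recursion so that both $\gstspdim{i}{\ell}{k_{j}}$ and $\gstspdim{i}{\ell}{k_{j}(x)}$ are tracked simultaneously, and in verifying that the telescoping sum produces the stated quadratic expression in $r$.
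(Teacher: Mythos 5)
Your proposal is correct and uses the same ingredients in the same way as the paper's proof: Lemma~\ref{discrete index bootstrap} to bound $\gstspdim{i}{\ell}{k_{j}}$, Theorem~\ref{main thm} to reduce one level of the tower, and an accumulation of the resulting increments $d + j - 1 + \varepsilon$ to produce the quadratic. The only difference is organizational: the paper telescopes the recurrence $\gstspdim{i}{\ell}{k_{j}(x)} - \gstspdim{i}{\ell}{k_{j-1}(x)} \leq d + j - 1 + \varepsilon$ directly over $j = 1, \dots, r$, whereas you package the same computation as an induction on $r$; these are the same argument.
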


\begin{proof}
Note that by definition of the invariants in question, it suffices to consider the case $F=k_r(x)$. By Lemma~\ref{discrete index bootstrap}, we know that $\gstspdim{i}{\ell}{k_j} \leq \gstspdim{i}{\ell}{k} + j= d + j$. Let $\varepsilon$ be $2$ if $\ell$ is odd and let it be $3$ if $\ell$ is even. By Theorem~\ref{main thm}, we have
$\gstspdim{i}{\ell}{k_{j}(x)} \leq \gstspdim{i}{\ell}{k_{j-1}} + \gstspdim{i}{\ell}{k_{j-1}(x)} + \varepsilon$, and so
\[\gstspdim{i}{\ell}{k_{j}(x)} - \gstspdim{i}{\ell}{k_{j-1}(x)} \leq d + j - 1 + \varepsilon.\]
Taking a sum of these inequalities for $j = 1, \ldots, r$ yields
\[\gstspdim{i}{\ell}{k_{r}(x)} - \gstspdim{i}{\ell}{k_{0}(x)} \leq rd + \frac{r(r-1)}{2} + r\varepsilon\]
and so
\[\gstspdim{i}{\ell}{k_{r}(x)} \leq rd + \frac{r(r-1)}{2} + \delta + r\varepsilon = \delta + \frac r 2 (r + 2d + 2\varepsilon - 1),\]
as desired. The proof for the stable splitting dimension is similar (using the corresponding assertions of Lemma~\ref{discrete index bootstrap} and Theorem~\ref{main thm}).
\end{proof}

Next, we would like to examine the behavior of the splitting dimension as the cohomological degree varies. While we don't have the ability to control this well for general fields, we can make some statements to this effect in the case that the cohomological dimension is bounded, using that $\gstspdim{m}{\ell}{k}=0$ for $m>\cd_\ell(k)$.

\begin{thm} \label{cd iterative sgf} 
Let $k$ be a field, let $\ell\neq \cha(k)$ be a prime, and let
$c = \cd_{\ell}(k)$.
Consider a sequence of fields $k=k_0, k_1, \ldots, k_r$ where $k_j$ is a complete discretely valued field having residue field $k_{j-1}$ for all $j \ge 1$.
Set $\varepsilon = 2$ if $\ell$ is odd and $\varepsilon = 3$ if $\ell = 2$. Then 
$$\gstspdim{c + m}{\ell}{k_r} \le \max(0,r-m+1)\quad \text{for $m \ge 1$},$$ and
\[\gstspdim{c + m}{\ell}{F} \leq
\begin{cases}
\frac12 r(r-1) + r\varepsilon + \gstspdim{c + 1}{\ell}{k(x)}
&\text{for $m=1$,} \\
\frac12(r - m + 1)(r - m) + (r - m + 2)\varepsilon &\text{for $2 \leq m \leq r+1$,} \\
0 & \text{for $m > r + 1$}
\end{cases}\]
for any one variable function field $F$ over $k_r$.
The same assertions hold for the stable splitting dimension.
\end{thm}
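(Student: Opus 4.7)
The plan is to iterate Theorem~\ref{main thm} and Lemma~\ref{discrete index bootstrap}, using cohomological dimension to force the vanishing of splitting dimensions in low-$j$ cases. The only external inputs are the standard facts $\cd_\ell(k_j) \le c+j$ (by induction, since passing to a complete discretely valued field raises $\cd_\ell$ by at most $1$) and $\cd_\ell(F) \le c+r+1$ (adding one for transcendence degree); both are in Serre's \emph{Cohomologie Galoisienne}, II.4. Since $\cd_\ell$ does not increase under finite extensions, these imply $H^n(E, \mu_\ell^{\otimes n-1}) = 0$ for every finite extension $E$ of any of the fields in question once $n > \cd_\ell(E)$, and hence $\gstspdim{n}{\ell}{E} = 0$ in that range. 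Writing $a_j := \gstspdim{c+m}{\ell}{k_j}$ and $b_j := \gstspdim{c+m}{\ell}{k_j(x)}$, this yields the base vanishings $a_j = 0$ for $j \le m-1$, $b_j = 0$ for $j \le m-2$, and $\gstspdim{c+m}{\ell}{F} = 0$ whenever $m > r+1$.

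The first assertion then follows immediately: if $m \ge r+1$ then $a_r = 0$ by the vanishing above, while if $m \le r$ we have $a_{m-1} = 0$ and Lemma~\ref{discrete index bootstrap}, applied $r-m+1$ times, gives $a_r \le r-m+1$.

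For the second assertion, set $A := \gstspdim{c+m}{\ell}{F}$, and assume $m \le r+1$ (the case $m > r+1$ is the base vanishing above). Theorem~\ref{main thm} applied to $F$ over the CDVF $k_r$ gives $A \le a_{r-1} + b_{r-1} + \varepsilon$, and applied to the function field $k_j(x)$ over the CDVF $k_j$ (for each $j \ge 1$) gives the recursion $b_j \le a_{j-1} + b_{j-1} + \varepsilon$. I will iterate this recursion on $b_{r-1}, b_{r-2}, \ldots$ until a vanishing base case terminates it. For $m \ge 2$, the recursion terminates at $b_{m-1} \le a_{m-2} + b_{m-2} + \varepsilon = \varepsilon$; in total this uses $(r-m+2)$ copies of $\varepsilon$ together with the terms $a_{m-1}, a_m, \ldots, a_{r-1}$. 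The first assertion bounds $a_j \le j-m+1$ in this range, and $\sum_{j=m-1}^{r-1}(j-m+1) = \tfrac12(r-m)(r-m+1)$, yielding the stated bound. For $m=1$ no $b_j$ vanishes a priori, so the recursion runs all the way to $b_0 = \gstspdim{c+1}{\ell}{k(x)}$, which contributes this term to the bound together with $r$ copies of $\varepsilon$ and the sum $\sum_{j=0}^{r-1} j = \tfrac12 r(r-1)$.

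The stable splitting dimension version is proved by the identical telescoping argument, invoking the $\stspdim{i}{\ell}{-}$ versions of Lemma~\ref{discrete index bootstrap} and Theorem~\ref{main thm}. The only real obstacle is the arithmetic bookkeeping---identifying precisely where each $a_j$ and $b_j$ is forced to vanish so that the telescoping sum terminates at the correct step and the resulting coefficients match those in the statement.
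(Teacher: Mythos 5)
Your proof is correct and follows essentially the same route as the paper: both establish the first assertion by combining $\cd_\ell(k_j)=c+j$ with Lemma~\ref{discrete index bootstrap} applied to the truncated sequence $k_{m-1},\dots,k_r$, and both obtain the second assertion by telescoping the recursion $b_j \le a_{j-1}+b_{j-1}+\varepsilon$ from Theorem~\ref{main thm} down to the first vanishing base case. The only (purely presentational) difference is that the paper packages the telescoping step as an invocation of Theorem~\ref{new iterative} (after first noting $\gstspdim{c+m}{\ell}{k_{m-1}(x)}\le\varepsilon$ for $m\ge 2$), whereas you carry out the sum inline; the bookkeeping and resulting constants agree.
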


\begin{proof}
For the first assertion,  we have $\cd_\ell(k_j) = c + j$ for $j \ge 0$ by 
applying \cite[Proposition~II.4.3.12]{SerreGalois} inductively.  Thus
$\gstspdim{c + m}{\ell}{k_r} = 0$ if $m \ge r+1$, as asserted in that case.  On the other hand, if $m \le r$ then $\gstspdim{c+m}{\ell}{k_{m-1}} = 0$. 
Hence $\gstspdim{c + m}{\ell}{k_r} \le r-m+1$ by applying Lemma~\ref{discrete index bootstrap} to the sequence of fields $k_{m-1},\dots,k_r$.  

For the second assertion, again it suffices to consider the case when $F=k_r(x)$. Note that the
case $m > r + 1$ follows from the fact that $\cd_\ell(k_r(x)) = c + r + 1$ by \cite[Proposition~II.4.2.11]{SerreGalois}. The case $m = r + 1$ follows from Theorem~\ref{main thm}, using the fact that $\gstspdim{c + m}{\ell}{k_{r-1}(x)} = 0 = \gstspdim{c + m}{\ell}{k_{r - 1}}$ because of the cohomological dimension of these fields. 

For the case $2 \leq m \leq r$, observe that $\gstspdim{c+m}{\ell}{k_{m-1}} = 0 = \gstspdim{c+m}{\ell}{k_{m-2}(x)}$ because $\cd(k_{m-1}) = c+m-1 = \cd({k_{m-2}}(x))$, and similarly $\gstspdim{c+m}{\ell}{k_{m-2}} = 0$.  Thus Theorem~\ref{main thm} yields $\gstspdim{c+m}{\ell}{k_{m-1}(x)} \le \varepsilon$.  Now 
write $k'=k_{m-1}$ and $k'_j = k_{m-1+j}$.  Thus $k_r=k'_{r+1-m}$.
Applying Theorem~\ref{new iterative} with $k', c+m, r-m+1$ playing the roles of $k,i,r$ there, we have 
$\gstspdim{c+m}{\ell}{k_r(x)} \le \varepsilon + \frac{r-m+1}{2}(r-m+1 + 2 \!\cdot\! 0 + 2\varepsilon - 1) = \frac12(r - m + 1)(r - m) + (r - m + 2)\varepsilon$.

For $m=1$, we have $\gstspdim{c+1}{\ell}{k}=0$ since $\cd_\ell(k)=c$. Theorem~\ref{new iterative} with $i=c+1$ yields 
$\gstspdim{c+1}{\ell}{k_r(x)} \le  \gstspdim{c+1}{\ell}{k(x)}+ \frac{r}{2}(r + 2 \!\cdot\! 0 + 2\varepsilon - 1)
= \frac12 r(r - 1) + r\varepsilon + \gstspdim{c+1}{\ell}{k(x)}$.

The same proof shows the assertions on the stable splitting dimension, using the corresponding assertions in Lemma~\ref{discrete index bootstrap}, Theorem~\ref{main thm}, and Theorem~\ref{new iterative}.
\end{proof}

\begin{rem} \label{iterative rk}
\renewcommand{\theenumi}{\alph{enumi}}
\renewcommand{\labelenumi}{(\alph{enumi})}
\begin{enumerate}
\item \label{cd gssd sgf}
The bounds on $\gstspdim{i}{\ell}{k_r(x)}$ also apply to $\gstspdim{i}{\ell}{F}$ for any finite extension~$F$ of~$k_r(x)$, since the generalized stable $i$-splitting dimension either stays the same or decreases upon passing to a finite extension.
\item  \label{ssd iterative bounds}
In the case of $\stspdim{i}{\ell}{k_r(x)}$, the bounds given in 
Theorem~\ref{cd iterative sgf} are not in general sharp. For example, consider the field $k_r=\mbb C((s_1))\cdots((s_r))$ for $r \ge 1$, and let $\ell$ be a prime.  Then Theorem~\ref{cd iterative sgf} says that $\stspdim{2}{\ell}{k_r(x)} \le \gstspdim{2}{\ell}{k_r(x)} \le
\frac{1}{2}(r-1)(r-2)+ r\varepsilon$, with $\varepsilon = 2$ (resp., $3$) if $\ell \ne 2$ (resp., $=2$).  But according to \cite[Corollary~5.7]{HHK}, $\stspdim{2}{\ell}{k_r(x)} \le r$, which is smaller.
\item \label{up and down}
Theorem~\ref{cd iterative sgf} shows that if $k$ is fixed and $F$ is a one-variable function field over $k_r$ as above, then our bound on $\gstspdim{i}{\ell}{F}$ (resp., $\gstspdim{i}{\ell}{k_r}$) depends only on $r-i$ for $i > \cd_\ell(k)+1$ (resp., for $i > \cd_\ell(k)$); moreover the bound increases with $r$ and decreases with $i$ (and similarly for ${\rm ssd}_\ell^i$).  More precisely, as $i$ increases, our bound on $\gstspdim{i}{\ell}{k_r}$ decreases linearly until it reaches $0$, and our bound on $\gstspdim{i}{\ell}{k_r(x)}$ decreases quadratically; and the same happens as $r$ decreases.  For numerical examples, see the discussion following Proposition~\ref{number field ex}.
\item \label{vcd rk}
Suppose more generally that $k$ is a field with virtual $\ell$-cohomological dimension equal to~$c$; i.e., there is a finite field extension $k'/k$ such that $\cd_\ell(k')=c$.  Let $F$ be a one-variable function field over $k_r$, and let $F' = Fk'$.  Then for $i \ge c+1$, the value of $\gstspdim{i}{\ell}{F'}$ is bounded via the above theorem, and we have that $\gstspdim{i}{\ell}{F} \le v_\ell + \gstspdim{i}{\ell}{F'}$, where $v_\ell$ is the $\ell$-adic valuation of $[k':k]$.  
\end{enumerate}
\end{rem}

\section{Splitting for arithmetic surfaces} 

We have so far focused on the splitting of cohomology classes $\alpha \in H^i(F,\mu_\ell^{\otimes i-1})$ in the case of a semiglobal field $F$; i.e., a one-variable function field over a complete discretely valued field.  We can also consider the case of one-variable function fields $F$ over a global field.  Such a field $F$ has a model which is a two-dimensional regular integral scheme that is projective over either a finite field or the ring of integers of a number field (of relative dimension one).  In the latter case, there is the following splitting result when $i=3$ and $\ell=2$, due to a theorem of Suresh.

\begin{thm} \label{arith deg 2 splitting}
Let $\mathscr X$ be a two-dimensional regular integral scheme that is projective over the ring of integers of a number field.  Let $F$ be the function field of $\ms X$, and let 
$\gamma_1,\dots,\gamma_N \in H^3(F,\mu_2^{\otimes 2})$.  Then there is a degree two field extension of $F$ that splits each $\gamma_j$.
\end{thm}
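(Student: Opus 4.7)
The strategy is to combine Suresh's single-class splitting theorem from \cite{Suresh} with an argument that compresses finitely many quadratic splittings into one.

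My first step is to reformulate the problem cohomologically. By the Arason-type exact sequence for a quadratic extension $L = F(\sqrt{a})$ of a field of characteristic not $2$ (valid in all degrees via the norm residue isomorphism theorem),
\[H^2(F, \mu_2) \xrightarrow{(a)\cup -} H^3(F, \mu_2^{\otimes 2}) \xrightarrow{\mathrm{res}} H^3(L, \mu_2^{\otimes 2})\]
is exact. Hence $L$ simultaneously splits $\gamma_1, \dots, \gamma_N$ if and only if each $\gamma_j$ lies in $(a) \cup H^2(F, \mu_2)$, and the theorem reduces to finding a single $a \in F^\times$ that can serve as a common ``first slot'' for all the $\gamma_j$.

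Next I would invoke Suresh's theorem to get, for each $j$, elements $a_j \in F^\times$ and $\beta_j \in H^2(F, \mu_2)$ with $\gamma_j = (a_j) \cup \beta_j$. A naive induction over $j$ only produces a tower of quadratic extensions of total degree $2^N$, so the real content is in consolidating the $a_j$ into a single $a$. I envision two plausible routes. The first introduces auxiliary parameters and works over a rational function field $F' = F(t_1, \dots, t_{N-1})$: form a single class $\Gamma \in H^3(F', \mu_2^{\otimes 2})$ whose specializations at suitable points recover each $\gamma_j$, apply Suresh to $\Gamma$ over a good arithmetic-surface model of $F'$, and specialize the resulting first slot back to $F$. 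The second route exploits the arithmetic ramification structure on the regular model $\mathscr X$ directly: each $\gamma_j$ has ramification on finitely many divisors $D \subset \mathscr X$, giving $2$-torsion Brauer classes on the curves $D$; a Gosavi-type simultaneous-splitting statement for Brauer classes on these curves, combined with Grunwald-Wang-style input for the residue number fields of horizontal divisors, would produce a single $a$ whose associated quadratic extension kills every ramification, after which any residual unramified classes can be handled by a Hasse-type principle for unramified $H^3$ on arithmetic surfaces.

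The main obstacle is precisely this consolidation: the single-class splittings given by Suresh offer no a priori compatibility, so to keep the final degree at $2$ one must either perform the auxiliary-parameter/specialization trick carefully enough to avoid introducing new ramification upon specialization, or make honest use of the arithmetic structure of $\mathscr X$ via the Brauer groups of the ramification divisors together with arithmetic of the underlying number field.
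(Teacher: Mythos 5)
Your reduction is fine as far as it goes (indeed, for the direction you actually need you don't even require exactness of the Arason sequence: if $\gamma_j=(a)\cup\beta_j$ then trivially $\gamma_j$ dies over $F(\sqrt a)$ because $(a)$ does). But you explicitly leave the decisive step --- producing a \emph{single} $a$ that serves as a common first slot for all $N$ classes --- as an unproven ``main obstacle,'' offering only two speculative routes. Neither is executed, and neither would work as sketched: the auxiliary-parameter route passes to $F(t_1,\dots,t_{N-1})$, which is no longer the function field of an arithmetic surface, so the input theorem does not apply there; and the ramification-divisor route, while structurally reasonable (it is close in spirit to how the paper handles the finite-field analogue in Theorem~\ref{deg ell splitting}), would have to contend with the fact that for $\ell=2$ the unramified $H^3$ of an arithmetic surface need not vanish when the base number field has real places, so the ``Hasse-type principle for unramified $H^3$'' you invoke is not available without further hypotheses. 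As written, the proposal is therefore not a proof.

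The resolution is much simpler than anything you propose: Theorem~3.2 of Suresh's 2004 paper \cite{Suresh1} (not the 2020 paper \cite{Suresh}) is already stated for a \emph{finite collection} of classes $\gamma_1,\dots,\gamma_N\in H^3(F,\mu_2^{\otimes 2})$ and produces a \emph{common} $f\in F^\times$ with $\gamma_j=(f)\cup\beta_j$ for all $j$. The consolidation you identify as the real content is thus the content of the cited theorem, and the paper's proof is exactly your first paragraph followed by that citation. If you were instead aiming to reprove Suresh's simultaneous statement from the single-class version, that would be a substantial undertaking requiring the arithmetic input you gesture at, and you would need to supply it in full.
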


\begin{proof}
Theorem~3.2 of~\cite{Suresh1} asserts that there exist $f \in F^\times$ and $\beta_j \in H^2(F,\mu_2)$ for $j=1,\dots,N$ such that $\gamma_j = (f) \cup \beta_j$ for all $j$.  Thus every $\gamma_j$ is split by the degree two extension $F(f^{1/2})$ of $F$.
\end{proof}

In the remainder of this section, our goal is to treat the analogous
situation for the function field $F$ of a regular projective surface over a finite field, with $\ell \ne \cha(F)$.  Specifically, in Theorem~\ref{deg ell splitting}, we show that a finite set of elements in $H^3(F,\mu_\ell^{\otimes 2})$ can all be split by some extension of degree $\ell$.  This will then be used in the next section to obtain values of ${\rm gssd}$ in situations related to global function fields, building also on the previous sections.  We first need some preliminary results.

\begin{lem} \label{Kummer}
Let $\ms X$ be a normal integral scheme whose function field $F$ contains a primitive $\ell$-th root of unity for some prime number $\ell$.  Let $P_1,\dots,P_r$ be closed points of $\ms X$ whose residue fields are finite of order prime to $\ell$.
Then there is a Galois field extension $L/F$ of degree $\ell$ such that the normalization $\ms Y$ of $\ms X$ in $L$ has the property that the fiber of $\ms Y \to \ms X$ over each $P_i$ is \'etale and consists of a single closed point of $\ms Y$.
\end{lem}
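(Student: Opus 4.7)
The plan is to apply Kummer theory. Since $F$ contains a primitive $\ell$-th root of unity $\zeta_\ell$, every degree-$\ell$ cyclic Galois extension of $F$ has the form $L = F(f^{1/\ell})$ for some $f \in F^\times \setminus F^{\times\ell}$. Setting $A_i := \mc O_{\ms X, P_i}$, I would reduce the problem to producing an $f \in F^\times$ such that, for each $i$, (i) $f \in A_i^\times$, and (ii) $\bar f \in \kappa(P_i)^\times$ is not an $\ell$-th power. Condition (ii) already forces $f \notin F^{\times\ell}$, so $L/F$ will automatically have degree $\ell$.

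Given such $f$, the geometric assertion would be checked by direct local algebra at each $P_i$. Because $|\kappa(P_i)|$ is prime to $\ell$, the integer $\ell$ is a unit in $A_i$; combined with (i) this makes the discriminant $\pm \ell^\ell f^{\ell - 1}$ of $y^\ell - f$ a unit, so $B_i := A_i[y]/(y^\ell - f)$ is finite \'etale of degree $\ell$ over $A_i$. I would then observe that $\kappa(P_i)$ itself contains $\zeta_\ell$: indeed $\zeta_\ell$ is integral over $\mathbb Z$ and so lies in the normal ring $A_i$, while $1 - \zeta_\ell$ divides $\ell = \prod_{j=1}^{\ell-1}(1 - \zeta_\ell^j)$ and is therefore a unit in $A_i$, forcing $\zeta_\ell$ to reduce to a nontrivial (hence primitive) $\ell$-th root of unity in $\kappa(P_i)$. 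This makes $y^\ell - \bar f$ irreducible over $\kappa(P_i)$ by (ii), so $B_i \otimes_{A_i} \kappa(P_i)$ is a single field of degree $\ell$; thus $B_i$ is a local ring, and being \'etale over the normal domain $A_i$ with fraction field $L$, it is itself a normal domain and must therefore coincide with the integral closure of $A_i$ in $L$. Hence the fiber of $\ms Y \to \ms X$ above $P_i$ is a single closed point with \'etale residue extension, exactly as required. The same discussion also shows $\ell \mid |\kappa(P_i)^\times|$, so non-$\ell$-th powers do exist in each $\kappa(P_i)^\times$.

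It then remains to produce $f$. I would select an affine open $U = \Spec(A) \subseteq \ms X$ containing all of $P_1, \ldots, P_r$ (available in the situations of interest, where $\ms X$ is quasi-projective), so that the $P_i$ correspond to pairwise distinct maximal ideals $\mf p_i \subset A$ with $A/\mf p_i \cong \kappa(P_i)$. The Chinese Remainder Theorem then yields a surjection $A \twoheadrightarrow \prod_{i=1}^r \kappa(P_i)$, through which any tuple of non-$\ell$-th powers $c_i \in \kappa(P_i)^\times$ lifts to an element $f \in A \subset F^\times$ satisfying (i) and (ii). The main obstacle is precisely this last step, where one converts the purely local data at each $P_i$ into a single global function; this is where the existence of a common affine open (or an equivalent semilocal approximation statement) enters. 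Everything else in the argument is routine commutative algebra.
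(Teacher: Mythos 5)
Your proposal is correct and follows essentially the same route as the paper's proof: pick an affine open containing the $P_i$, use the Chinese Remainder Theorem to find a function reducing to a non-$\ell$-th power in each $\kappa(P_i)$, and adjoin its $\ell$-th root. You are in fact somewhat more explicit than the paper on two points it leaves implicit — why $\zeta_\ell$ reduces to a primitive root in each $\kappa(P_i)$, and why the local ring $A_i[y]/(y^\ell-f)$, being \'etale over the normal ring $A_i$, is already the normalization.
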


\begin{proof}
Choose an affine open subset $U = \Spec(R)$ of $\ms X$ that contains the points $P_i$, and let $\frak m_i$ be the maximal ideal of $R$ corresponding to $P_i$.  Let $k_i'$ be the unique degree $\ell$ field extension of the finite field $k_i := \kappa(P_i)$.  By the hypothesis on $F$, the field $k_i$ contains a primitive $\ell$-th root of unity; and so $k_i'/k_i$ is a Kummer extension, given by extracting an $\ell$-th root of some element $a_i \in k_i$ that is not an $\ell$-th power in $k_i$.  Since the maximal ideals $\frak m_i$ are pairwise relatively prime, by the Chinese Remainder Theorem there is an element $a \in R$ whose reduction modulo $\frak m_i$ is $a_i$ for all~$i$.  Here $a$ is not an $\ell$-th power in $F$.  The reduction of $S := R[x]/(x^\ell - a)$ modulo $\frak m_iS$ is $k_i'$ for all $i$, and so its fraction field $L$ has the asserted property.
\end{proof}

\begin{lem} \label{Kummer regular}
Let $R$ be a regular local ring of dimension two with fraction field $E$, and let $f,g$ be a system of parameters at the maximal ideal of $R$.  Let $L/E$ be a cyclic field extension whose degree $\ell$ is a prime number that is unequal to the residue characteristics of $R$ and such that $E$ contains a primitive $\ell$-th root of unity.  
Let $S$ be the normalization of $R$ in $L$, and suppose that $S[f^{-1}]$ is unramified over $R[f^{-1}]$.  Then $S$ is regular.
\end{lem}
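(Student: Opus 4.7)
The plan is to invoke Kummer theory to write $L = E(\sqrt[\ell]{a})$ for some $a \in E^\times \setminus (E^\times)^\ell$. Since $R$ is a two-dimensional regular local ring and hence a UFD (Auslander--Buchsbaum), I may, after multiplying $a$ by an $\ell$-th power in $E^\times$, assume $a \in R$ with prime factorization in $R$ having every exponent in $\{0,1,\ldots,\ell-1\}$; this modification does not alter the field $L$.

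Next, I connect the factorization of $a$ to the ramification of $S/R$. At a height one prime $\mathfrak p = (\pi)$ of $R$, the extension $L/E$ is ramified over the DVR $R_{\mathfrak p}$ precisely when $\pi \mid a$. The hypothesis that $S[f^{-1}]$ is unramified over $R[f^{-1}]$ therefore forces every prime divisor of $a$ to divide $f$. Since $\{f,g\}$ is a regular system of parameters for $R$, $f \notin \mathfrak m_R^2$, so $R/(f)$ is a DVR; hence $f$ is a prime element of $R$ and, up to units, the only prime divisor of $f$ in $R$. Consequently $a = u f^i$ for some $u \in R^\times$ and some $0 \le i < \ell$.

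I then split into two cases. If $i = 0$, then $a = u \in R^\times$ and $A := R[t]/(t^\ell - u)$ is finite \'etale over $R$ (as $\ell$ and $u$ are units); it is a free $R$-module of rank $\ell$ embedding into $L$, so a domain with fraction field $L$, and therefore equal to the normalization $S$; being \'etale over the regular ring $R$, it is regular. If $1 \le i < \ell$, choose $j \in \mathbb{Z}$ with $ij \equiv 1 \pmod{\ell}$; then $a^j$ differs from $u^j f$ by an $\ell$-th power in $E^\times$, so $L = E(\sqrt[\ell]{vf})$ for $v := u^j \in R^\times$. Setting $A := R[t]/(t^\ell - vf)$ with $\pi$ the image of $t$, this is a domain with fraction field $L$, and its reduction $A/\mathfrak m_R A \cong k[t]/(t^\ell)$ is local with maximal ideal $(t)$; hence $A$ has a unique maximal ideal $\mathfrak n$ above $\mathfrak m_R$, namely $\mathfrak n = (f, g, \pi) = (g, \pi)$ using $f = v^{-1}\pi^\ell \in (\pi)$. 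Since $\mathfrak n$ is two-generated in the two-dimensional local ring $A$, $A$ is regular, hence normal, hence equal to $S$. The key step is this last verification, which relies crucially on $(f,g) = \mathfrak m_R$ to make $f$ prime and to collapse the maximal ideal of $A$ to two generators; the Kummer-theoretic reduction to $a = vf$ using the UFD structure is comparatively routine.
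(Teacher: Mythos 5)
Your proposal is correct and follows essentially the same route as the paper's proof: Kummer generator normalized in the UFD $R$, ramification confined to primes dividing the generator, the hypothesis forcing the generator to be a unit times $f^i$, the \'etale case for $i=0$, and for $i\ge 1$ a reduction (via an exponent inverse to $i$ mod $\ell$) to $L=E(\sqrt[\ell]{vf})$ followed by the observation that $R[(vf)^{1/\ell}]$ is regular and hence equals $S$. Your explicit computation that the maximal ideal of $R[t]/(t^\ell - vf)$ is $(g,\pi)$ is just a spelled-out version of the paper's statement that $f_1^{1/\ell}, g$ form a regular system of parameters.
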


\begin{proof}
By the hypotheses, $L/E$ is a Kummer extension; i.e., $L = E[h^{1/\ell}]$ for some $h \in E$ that is not an $\ell$-th power.  After multiplying $h$ by an $\ell$-th power, we may assume that $h \in R$ and so $h^{1/\ell} \in S$.  Since the regular local ring $R$ is a unique factorization domain (by \cite[Theorem~5]{AuBu}), we may write $h=uh_1^{d_1}\cdots h_n^{d_n}$ with $n \ge 0$, where $u$ is a unit in $R$, the elements $h_i \in R$ are irreducible and define distinct height one primes, and each $d_i \ge 1$.  
After dividing $h$ by an $\ell$-th power, we may assume that $1 \le d_i < \ell$ for all $i$.  Since the residue characteristics of $R$ are unequal to $\ell$, the subring $R[h^{1/\ell}] \subseteq S$ is ramified over $R$ precisely over the primes $(h_i)$.  

If $n=0$ then the subring $R[h^{1/\ell}] = R[u^{1/\ell}] \subseteq L$ is finite \'etale over $R$, and hence regular.  So it is equal to its normalization; i.e., its integral closure in its fraction field $L$, which is $S$.  Thus $S$ is regular.
Alternatively, if $n>0$, then since $S[f^{-1}]$ is unramified over $R[f^{-1}]$, and since $f,h_1$ are both irreducible in $R$, it follows that $n=1$ and $h_1=vf$ for some unit $v \in R$.
Since $d_1$ and $\ell$ are relatively prime, there exist integers $a,b>0$ with $ad_1-b\ell=1$.  Hence $h^a=u^a v^{ad_1} f^{1+b\ell}$; and so $S$ contains an $\ell$-th root of $u^a v^{ad_1} f^{1+b\ell}$ and thus also of $f_1:=u^a v^{ad_1} f$.  The elements $f_1^{1/\ell}, g$ form a system of parameters for the subring $S'=R[f_1^{1/\ell}] \subseteq S$, which is therefore regular.  Since $f_1 = h^a/f^{b\ell}$ is not an $\ell$-th power in $E$, the fraction field of $S'$ has degree $\ell$ over $E$ and so is equal to $L$, the fraction field of $S$.  But $S$ is the normalization of $R$ in $L$, and hence also that of $S'$ in $L$.  Since the regular ring $S'$ is normal, $S=S'$, and so $S$ is regular.
\end{proof}

\begin{rem}
The conclusion of Lemma~\ref{Kummer regular} fails if $\cha(R)=0$ but $R$ has primes of residue characteristic $\ell$, even though $L/E$ is Kummer.  For example, let $R=\mbb Z_2[[x,y]]/(xy-2)$, for which $x,y$ form a system of parameters.  Let $E$ be the fraction field of $R$, take $\ell=2$, let $h=2y^2+1$, and write $L = E[h^{1/2}] = E[w]/(w^2-h)$.  Here $h$ is a unit in $R$; but $R[h^{1/2}]$ is not \'etale over $R$, being purely inseparable over the primes $(x)$ and $(y)$, where the residue characteristic is $2$.  Moreover $R[h^{1/2}]$ is not normal; its normalization $S$ (in its fraction field~$L$) is obtained by adjoining to $R[h^{1/2}]$ the element $z= (w+1)/y \in L$.
As an abstract ring, $S = R[z]/(z^2-xz-2)$.
This ring is ramified precisely over $(x)$, but it is not regular, having a singularity at its maximal ideal $(x,y)$.  
This phenomenon, which is contrary to the situation of Lemma~\ref{Kummer regular}, leads to difficulties in treating the analog of Theorem~\ref{deg ell splitting} in the case of a projective scheme of relative dimension one over the spectrum of the ring of integers of a number field, with general $\ell$.
\end{rem}

The following known result will be useful in proofs below, and we state it for ease of citation.

\begin{lem} \label{square}
Let $K'/K$ be an extension of discretely valued fields
with residue field extension $k'/k$ and ramification index $e$.  Let $\ell \ne \cha(k)$ be a prime and let $i$ be a non-negative integer. 
Then the diagram
                     \[\begin{tikzcd}
H^{i+1}(K, \mu_\ell^{\otimes i})\arrow{r}{{\rm res}}\arrow{d}
&H^i(k, \mu_\ell^{\otimes i-1})\arrow{d}{e}\\
H^{i+1}(K', \mu_\ell^{\otimes i})
\arrow{r}{{\rm res}}
&H^i(k', \mu_\ell^{\otimes i-1})
\end{tikzcd}\]
commutes, where the horizontal arrows are given by residues, the left hand vertical arrow is the natural map, and the right hand vertical arrow is the product of $e$ with the natural map. 
\end{lem}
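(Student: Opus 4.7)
The plan is to reduce to the Henselian setting and then use the standard decomposition of the Galois cohomology of a Henselian discretely valued field with coefficients prime to the residue characteristic.

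First I would reduce to the case where both $K$ and $K'$ are Henselian. This is legitimate because the residue maps factor through the Henselizations, and the residue field extension $k'/k$ and the ramification index $e$ are unchanged under passage to the Henselizations. After this reduction, fix a uniformizer $\pi$ of $K$ and a uniformizer $\pi'$ of $K'$; by the very definition of ramification index, one has $\pi = u(\pi')^e$ for some unit $u$ in the valuation ring of $K'$.

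Next I would invoke the Witt-style decomposition
\[H^{i+1}(K, \mu_\ell^{\otimes i}) \;\cong\; H^{i+1}(k, \mu_\ell^{\otimes i}) \,\oplus\, H^i(k, \mu_\ell^{\otimes i-1}),\]
valid for a Henselian discretely valued field whose residue characteristic is prime to $\ell$ (see e.g.\ \cite[Corollary~6.8.8]{GSz} or \cite[Section~II.7]{GMS}). In terms of this decomposition, every class $\alpha \in H^{i+1}(K, \mu_\ell^{\otimes i})$ may be written as $\alpha_0 + (\pi)\cup \tilde\beta$, where $\alpha_0$ is unramified and $\tilde\beta$ is an unramified lift of the residue $\beta := \res_K(\alpha) \in H^i(k, \mu_\ell^{\otimes i-1})$, and the residue map is the projection onto the second summand. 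Restricting to $K'$ and expanding via the bilinearity of cup product together with the identity $(\pi) = (u) + e\cdot (\pi')$ in $H^1(K', \mu_\ell)$, I obtain
\[\alpha_{K'} \;=\; (\alpha_0)_{K'} \,+\, (u)\cup \tilde\beta_{K'} \,+\, e\cdot (\pi')\cup \tilde\beta_{K'}.\]
The first two summands are unramified over the valuation ring of $K'$ (the second because $u$ is a unit there), hence contribute trivially to $\res_{K'}$; and the residue of the third summand is $e$ times the image of $\beta$ in $H^i(k', \mu_\ell^{\otimes i-1})$, which is exactly what the right-hand vertical arrow produces. This yields the asserted commutativity.

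The main obstacle is not conceptual but notational: one must ensure that the chosen normalization of the residue map agrees with the projection onto the second summand in the decomposition above, and that the identification of that summand with $H^i(k,\mu_\ell^{\otimes i-1})$ is preserved under base change along the unramified direction. Once those conventions are pinned down, the computation reduces to the short symbolic manipulation carried out above.
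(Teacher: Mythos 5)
Your argument is correct, but it is worth noting that the paper does not prove this lemma at all: it simply observes that the statement is a special case of \cite[Proposition~II.8.2, p.~19]{GMS} and cites it. What you have written is, in effect, the standard proof of that cited proposition: pass to completions (which is legitimate, and indeed is how \cite[Section~II.7.13]{GMS} \emph{defines} the residue for a non-complete discretely valued field, with $k'/k$ and $e$ unchanged), split $\alpha$ as $\alpha_0 + (\pi)\cup\tilde\beta$ via the uniformizer-dependent splitting of the Witt/Hochschild--Serre exact sequence, and compute the residue of the restriction using $(\pi) = (u) + e(\pi')$ in $H^1(K',\mu_\ell)$. The two steps you should make sure you can justify are (i) that the absolute Galois group of a Henselian discretely valued field agrees with that of its completion, so the decomposition of \cite[Corollary~6.8.8]{GSz} (stated there for complete fields) applies, and (ii) that $(u)\cup\tilde\beta_{K'}$ has trivial residue because $u$ is a unit and $\tilde\beta_{K'}$ is unramified --- this is itself one of the basic properties of the residue map in \cite[Section~II.7]{GMS}, so you are not getting it for free. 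Your approach buys a self-contained verification of the compatibility, at the cost of re-deriving a standard reference; the paper's approach buys brevity. Either is acceptable, and there is no gap in your reasoning.
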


\begin{proof}
This is a special case of \cite[Proposition~II.8.2, p.~19]{GMS}.
\end{proof}

These next lemmas will be used to verify properties needed 
in the proof of Theorem~\ref{deg ell splitting}, concerning the ramification and splitting behavior of cohomology classes under pullback.

\begin{lem} \label{unram extension}
Let $\ms Z \to \ms X$ be a morphism of regular integral two-dimensional schemes, with function fields $L/F$. 
Let $\ell$ be a prime number unequal to the residue characteristics at the points of $\ms X$ and $\ms Z$, and let $\gamma \in H^3(F,\mu_\ell^{\otimes 2})$.  If $\gamma$ is unramified on $\ms X$ then its restriction $\gamma' \in H^3(L,\mu_\ell^{\otimes 2})$ is unramified on $\ms Z$.
\end{lem}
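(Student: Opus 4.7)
The plan is to show that $\res_v(\gamma_L) = 0$ for every $z \in \ms Z^{(1)}$, where $v = v_z$ is the associated discrete valuation of $L$. Fix such a $z$, let $x \in \ms X$ be its image, and write $R = \mc O_{\ms X,x}$ and $A = \mc O_{\ms Z,z}$. The morphism $\ms Z \to \ms X$ induces a local homomorphism $R \to A$ extending the inclusion $F \hookrightarrow L$; here $R$ is a regular local ring of dimension at most two in which $\ell$ is invertible, and $A$ is a DVR with fraction field $L$.

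The key step is to produce a lift of $\gamma$ to a class $\widetilde\gamma \in H^3(R,\mu_\ell^{\otimes 2})$. The height-one primes of $R$ correspond bijectively to codimension-one points of $\ms X$ whose closure contains $x$, so by the unramifiedness hypothesis $\gamma$ has vanishing residue at every height-one prime of $R$. Gabber's absolute purity theorem for \'etale cohomology, equivalently the Bloch-Ogus/Gersten property for regular local rings with $\ell$-invertible coefficients, then implies that $\gamma$ lies in the image of the injection $H^3(R,\mu_\ell^{\otimes 2}) \hookrightarrow H^3(F,\mu_\ell^{\otimes 2})$, yielding the desired $\widetilde\gamma$. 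Functoriality along $R \to A$ sends $\widetilde\gamma$ to a class in $H^3(A,\mu_\ell^{\otimes 2})$ whose image in $H^3(L,\mu_\ell^{\otimes 2})$ is exactly $\gamma_L$; hence $\gamma_L$ lies in the image of $H^3(A,\mu_\ell^{\otimes 2}) \to H^3(L,\mu_\ell^{\otimes 2})$, and so $\res_v(\gamma_L) = 0$ by the localization exact sequence for the DVR $A$.

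The main obstacle is the codimension-two case, when $x$ is a closed point of $\ms X$ (which occurs, for instance, when $z$ is the generic point of an exceptional divisor of a blow-up). In that situation the restriction $v|_F$ can be a divisorial valuation of $F$ centered at $x$ that is not attached to any codimension-one point of $\ms X$, so the hypothesis on residues does not directly control $\res_{v|_F}(\gamma)$, and one genuinely needs purity for the two-dimensional regular local ring $R$. In the easier cases the argument is elementary: when $x$ is the generic point of $\ms X$, one has $R = F$ and the lift is tautological; when $x \in \ms X^{(1)}$, the ring $R$ is a DVR and $v|_F = v_x$ by maximality of DVRs among valuation subrings of a given fraction field, so Lemma~\ref{square} gives $\res_v(\gamma_L) = e \cdot (\res_x \gamma)_{\kappa(v)} = 0$ directly.
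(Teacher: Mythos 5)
Your proof is correct and follows essentially the same route as the paper's: codimension-one images of $\zeta$ are handled by Lemma~\ref{square}, and for codimension-two images one lifts $\gamma$ to $H^3(\mc O_{\ms X,\xi},\mu_\ell^{\otimes 2})$ using exactness of the Gersten complex for the two-dimensional regular local ring (the paper cites Sakagaito's Proposition~6 where you invoke Gabber purity/Bloch--Ogus, but the content used is the same), then pushes forward to $\mc O_{\ms Z,\zeta}$ and uses that the Gersten sequence is a complex. The only cosmetic differences are that you treat the generic-point case explicitly and justify $v|_F = v_\xi$ via maximality of DVRs, points the paper leaves implicit.
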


\begin{proof}
Let $\zeta$ be a codimension one point of $\ms Z$.  We wish to show that the residue of $\gamma'$ at $\zeta$ is trivial.  Let $\xi$ be the image of $\zeta$ in $\ms X$.  Thus $\xi$ has codimension one or two on $\ms X$.  In the former case, $\gamma$ has trivial residue at $\xi$, hence $\gamma'$ has trivial residue at $\zeta$ by Lemma~\ref{square}.

Now assume that $\xi$ has codimension two in~$\ms X$.  
The rows in the commutative diagram
\[\begin{tikzcd}
H^3(\ms O_{\ms Z,\zeta}, \mu_\ell^{\otimes 2})\arrow{r}&H^3(L,\mu_\ell^{\otimes 2}))\arrow{r}{{\rm res}}
&H^2(\kappa(\zeta), \mu_\ell)\\
H^3(\ms O_{\ms X,\xi}, \mu_\ell^{\otimes 2})\arrow{r}\arrow{u}&H^3(F,\mu_\ell^{\otimes 2}))
\arrow{r}{{\rm res}}\arrow{u}
&\prod_{x \in \Spec(\ms O_{\ms X,\xi})^{(1)}}
H^2(\kappa(x), \mu_\ell)
\end{tikzcd}\]
are complexes, and the lower row is exact by \cite[Proposition~6]{Sakagaito}.  Since $\gamma$ is unramified on $\ms X$, it is the image of an element $\til \gamma \in H^3(\ms O_{\ms X,\xi}, \mu_\ell^{\otimes 2})$, by the exactness.  
Let $\til \gamma' \in H^3(\ms O_{\ms Z,\zeta}, \mu_\ell^{\otimes 2})$ be the image of $\til \gamma$. 
So the image  of $\til \gamma'$ in $H^3(L,\mu_\ell^{\otimes 2})$ is unramified at $\zeta$.  This latter image is $\gamma'$ by commutativity of the above square , so the conclusion follows.
\end{proof}

Given a field $L$, an arbitrary prime $\ell$, and non-negative integers $i,j$, Kato defined an abelian group $H^i(L,\mbb Z/\ell\mbb Z(j))$ that agrees with $H^i(L,\mu_\ell^{\otimes j})$ in the case that $\cha(L)\ne \ell$ (see \cite[page~143]{Kato}).  Moreover, as stated there, $H^2(L,\mbb Z/\ell\mbb Z(1))$ is just the $\ell$-torsion subgroup of $\Br(L)$, and $H^1(L,\mbb Z/\ell\mbb Z)$ is the same as $\Hom_{\rm cont}(\Gal(L^{\rm ab}/L),\mbb Z/\ell\mbb Z)$.

\begin{lem} \label{bl up reg pt}
Let $\ms X$ be a two-dimensional regular integral scheme that is projective over either a finite field or the ring of integers of a number field that we assume to be totally imaginary.  Let $\gamma \in H^3(F,\mbb Z/\ell\mbb Z(2))$ for some prime number $\ell \ne \cha(F)$, where $F$ is the function field of $\ms X$.  Let 
$C$ be a codimension one subscheme of $\ms X$ that contains the closures of the codimension one points of $\ms X$ where $\gamma$ is ramified. 
Consider the blow-up $\til{\ms X} \to \ms X$ of $\ms X$ at a finite set of regular points of $C$.
Then $\gamma$ is unramified at the generic point of each exceptional divisor of the blow-up.
\end{lem}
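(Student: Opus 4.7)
My plan is to reduce to one blown-up point $P$ at a time and then compute the residue $\beta := \partial_{\eta_E}\gamma \in H^2(\kappa(E),\mbb Z/\ell\mbb Z(1))$ using Kato's residue complex, where $E \cong \mbb P^1_{\kappa(P)}$ is the exceptional divisor over $P$. Setting $R = \ms O_{\ms X,P}$, the hypothesis that $P$ is a regular point of $C$ lets me pick a regular system of parameters $(u,v)$ of $R$ with $u$ a local equation of $C$ at $P$; then by hypothesis $\gamma$ is unramified at every height-one prime of $R$ other than $(u)$, and I set $\alpha := \partial_{(u)}\gamma \in H^2(\kappa(C),\mbb Z/\ell\mbb Z(1))$.

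The heart of the argument is to exploit Kato's residue complex on both $\ms X$ and the blowup $\til{\ms X}$ (which is regular, being the blowup of a regular surface at a regular point). In its relevant segment
\[\bigoplus_{h\in\ms Y^{(1)}}H^2(\kappa(h),\mbb Z/\ell\mbb Z(1)) \xrightarrow{d} \bigoplus_{Q\in\ms Y^{(2)}}H^1(\kappa(Q),\mbb Z/\ell\mbb Z),\]
Kato proves $d \circ \partial = 0$, where $\partial \colon H^3(F,\mbb Z/\ell\mbb Z(2)) \to \bigoplus_h H^2(\kappa(h),\mbb Z/\ell\mbb Z(1))$ is the residue. Applied at $P$ on $\ms X$, and using that only the $(u)$-component of $\partial\gamma$ is nonzero among primes through $P$, this gives $\partial_P\alpha = 0$ in $H^1(\kappa(P),\mbb Z/\ell\mbb Z)$. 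Applied on $\til{\ms X}$ at a closed point $P' \in E$: since the only height-one ramification loci of $\gamma$ near $E$ are $\eta_E$ itself and the strict transform $\til C$ (other ramification components of $\gamma$ on $\ms X$ do not contain $P$, by the local irreducibility of $C$ at $P$), the complex forces $\partial_{P'}\beta = 0$ for $P' \in E \smallsetminus \til C$, and at $P_0 := E \cap \til C$ it forces $\partial_{P_0}\beta = \pm\,\partial_{P_0}\alpha$. But $\til C \to C$ is an isomorphism identifying $P_0$ with $P$, so $\partial_{P_0}\alpha = \partial_P\alpha = 0$, hence $\partial_{P_0}\beta = 0$ as well. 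Thus $\beta$ is unramified at every closed point of $E$.

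To finish, I would invoke the Faddeev-type reciprocity sequence for the smooth projective curve $E \cong \mbb P^1_{\kappa(P)}$, which identifies the kernel of the total residue map on $H^2(\kappa(E),\mbb Z/\ell\mbb Z(1))$ with $H^2(\kappa(P),\mbb Z/\ell\mbb Z(1))$. In both the finite-field case and the totally-imaginary number-field case, closed points of $\ms X$ have finite residue field, so $H^2(\kappa(P),\mbb Z/\ell\mbb Z(1)) = 0$, and therefore $\beta = 0$, as required.

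The main subtlety to watch for is the possibility, in the number-field case, that $\ell$ coincides with the residue characteristic at $P$. Then one must genuinely use Kato's definition of $H^i(-,\mbb Z/\ell\mbb Z(j))$ via logarithmic de Rham--Witt sheaves, and cite the corresponding Kato versions of the residue maps, the complex property $d\circ\partial = 0$, and the Faddeev sequence for $\mbb P^1$ over a finite field. All the ingredients are in \cite{Kato}, but the prime-to-residue-characteristic and equal-characteristic cases need to be treated in parallel.
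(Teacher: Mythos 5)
Your argument is correct and follows essentially the same route as the paper: both exploit Kato's complex on the (regular) blow-up to show that the residue of $\gamma$ along the exceptional divisor $E\cong\mbb P^1_{\kappa(P)}$ is unramified at closed points of $E$, and then conclude from the vanishing of $H^2(\kappa(P),\mbb Z/\ell\mbb Z(1))$ for the finite residue field $\kappa(P)$. The only (immaterial) difference is the treatment of the point $E\cap\til C$: the paper simply excludes it and invokes the computation of unramified cohomology of the affine line $E\smallsetminus\{E\cap\til C\}$ via \cite[Theorem~III.9.3]{GMS}, whereas you handle that point directly by an extra application of the complex at $P$ on $\ms X$ and then work with the full projective line.
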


\begin{proof}
The field $F$ has no ordered field structure, and so the hypotheses of \cite[Corollary to Theorem~0.7]{Kato} are satisfied.  That result then provides an exact sequence
\[0 \to H^3(F, \mathbb Z/\ell\mbb Z(2)) \to \bigoplus_{\eta \in \til{\ms X}_1} H^2(\kappa(\eta), \mathbb Z/\ell\mbb Z(1))
\to \bigoplus_{x \in \til{\ms X}_0} H^1(\kappa(x), \mathbb Z/\ell\mbb Z(1)) \to \mbb Z/\ell\mbb Z \to 0,\]
where the maps are given by residues, and where $\til{\ms X}_i$ is the set of dimension $i$ points on $\til{\ms X}$.  

Let $\xi$ be one of the closed points of $\ms X$ that is blown up.
By the regularity hypotheses, the exceptional divisor $E$ over $\xi$ is a copy of $\mbb P^1_{\kappa(\xi)}$ that meets the proper transform of $C$ at a single point $\til\xi$. 
Consider any closed point $x_0 \in E$ other than $\til\xi$.  Then except for the generic point $\eta_0 \in \til{\ms X}_1$ of $E$, the class $\gamma$ is unramified at the dimension one points of $\til{\ms X}$ whose closure contains $x_0$.  So only one term in $\bigoplus_{\eta \in \til{\ms X}_1} H^2(\kappa(\eta), \mathbb Z/\ell\mbb Z(1))$ contributes to the image of $\gamma$ in $H^1(\kappa(x_0), \mathbb Z/\ell\mbb Z(1))$; viz., the one arising from $\eta_0 \in \til{\ms X}_1$.  Since the image of $\gamma$ in $\bigoplus_{x \in \til{\ms X}_0} H^1(\kappa(x), \mathbb Z/\ell\mbb Z(1))$ is $0$, it follows that the contribution of that one term is also zero; i.e., $\alpha := \res_{\eta_0}(\gamma)$ is unramified at $x_0$, where $x_0$ is an arbitrary closed point of $E$ other than $\til\xi$.

The complement of the $\kappa(\xi)$-point $\til\xi$ of $E \cong \mbb P^1_{\kappa(\xi)}$
is isomorphic to the affine line over $\kappa(\xi)$.  Since $\alpha$ is unramified over that complement, it is induced from a class in $H^2(\kappa(\xi),\mathbb Z/\ell\mbb Z(1))$ by \cite[Theorem~III.9.3, p.24]{GMS}.  
But $H^2(\kappa(\xi),\mathbb Z/\ell\mbb Z(1))$ is the $\ell$-torsion subgroup of $\Br(\kappa(\xi))$, which is trivial since $\kappa(\xi)$ is a finite field.  Hence $\alpha=0$. 
\end{proof}

\begin{lem} \label{unram over C}
Let $\ell$ be a prime number, and 
let $\mathscr X$ be a two-dimensional regular integral scheme that is projective over either a finite field or the ring of integers of a number field that we assume to be totally imaginary if $\ell=2$.
Let $\ms Y$ be the normalization of $\ms X$ in a degree $\ell$ separable field extension $L/F$, let $C \subset \ms X$ be a regular connected curve with function field $\kappa(C)$, 
and let $\alpha$ be an $\ell$-torsion element of $\Br(\kappa(C))$.  Suppose that at every closed point $P$ of $C$ at which $\alpha$ is ramified, $\pi:\ms Y \to \ms X$ is \'etale and $\pi^{-1}(P)$ is a single point. 
If $\eta \in \ms Y$ lies over the generic point of~$C$, then the pullback $\alpha_{\kappa(\eta)}$ is split.
\end{lem}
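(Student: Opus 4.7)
The plan is to pull back $\alpha$ to the normalization $\til C$ of $C$ in $\kappa(\eta)$, show that $\alpha_{\kappa(\eta)}$ is unramified on $\til C$, and then invoke the triviality of the appropriate $\ell$-torsion part of $\Br(\til C)$. The scheme $\til C$ is obtained as the normalization of the closure $\overline{\{\eta\}}\subseteq\ms Y$, which maps finitely and dominantly to $C$. Thus $\til C$ is a regular integral scheme projective over the base $B$ of $\ms X$, with function field $\kappa(\eta)$ and a finite morphism $\til C\to C$. Under the dichotomy in the hypothesis, $\til C$ is either a smooth projective curve over a finite field (when $B$ is a finite field, or when $C$ lies in a closed fiber of $\ms X\to\Spec\ms O_K$), or $\Spec\ms O_{L''}$ for a finite extension $L''/K$, which is again totally imaginary (since $K$ is, in the case $\ell=2$).

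Next I would verify unramifiedness point-by-point. For a closed point $Q\in\til C$ above $P\in C$, Lemma~\ref{square} gives $\res_Q(\alpha_{\kappa(\eta)})=e_{Q/P}\cdot(\res_P\alpha)_{\kappa(Q)}$ in $H^1(\kappa(Q),\mbb Z/\ell\mbb Z)$, which is trivially zero when $\alpha$ is unramified at $P$. At a point $P$ where $\alpha$ ramifies, the hypothesis forces $\pi$ to be \'etale at $P$ with $\pi^{-1}(P)=\{P'\}$, so $[\kappa(P'):\kappa(P)]=\ell$, and $\kappa(P')\subseteq\kappa(Q)$ since $Q$ maps to $P'$ in $\ms Y$. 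Because $\kappa(P)$ is finite, the restriction map $H^1(\kappa(P),\mbb Z/\ell\mbb Z)\to H^1(\kappa(P'),\mbb Z/\ell\mbb Z)$ is multiplication by $\ell$ on $\mbb Z/\ell\mbb Z$ (the restriction from $\wh{\mbb Z}$ to its index-$\ell$ subgroup $\ell\wh{\mbb Z}$ acts as multiplication by $\ell$ on characters), and hence is zero. Therefore $(\res_P\alpha)_{\kappa(Q)}=0$ and so $\res_Q(\alpha_{\kappa(\eta)})=0$.

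Finally, $\alpha_{\kappa(\eta)}$ lies in $\Br(\til C)[\ell]$. In the finite-field case $\Br(\til C)=0$ by the classical theorem on smooth projective curves over finite fields. In the number-ring case with $\til C=\Spec\ms O_{L''}$, the class has trivial invariants at all finite places by unramifiedness and at all archimedean places because $L''$ is totally imaginary (for $\ell$ odd, the $\ell$-torsion at archimedean places is automatically trivial), so it vanishes by Albert-Brauer-Hasse-Noether. I expect the subtlest point to be the residue-field restriction argument at ramified $P$, where the single-fiber \'etaleness hypothesis is crucial to ensure $\kappa(P')/\kappa(P)$ has degree exactly $\ell$ and embeds into $\kappa(Q)$; the rest is bookkeeping.
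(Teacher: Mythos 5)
Your proof is correct and follows essentially the same route as the paper: pass to the normalization of the closure of $\eta$, check unramifiedness of $\alpha_{\kappa(\eta)}$ at each closed point (using functoriality of residues at the unramified points of $C$, and at the ramified points the fact that the residue field extension is the degree-$\ell$ extension of the finite field $\kappa(P)$, which annihilates $H^1(\kappa(P),\mbb Z/\ell\mbb Z)$), and then conclude via the vanishing of the $\ell$-torsion in the Brauer group of the resulting Dedekind scheme. The only cosmetic difference is that the paper phrases the key step as the residue field extension ``agreeing with'' the cyclic extension cut out by $\res_P(\alpha)$, while you observe directly that restriction to any degree-$\ell$ extension of a finite field is zero on $H^1(-,\mbb Z/\ell\mbb Z)$ --- the same fact.
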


\begin{proof}
Let $\mc P$ be the set of closed points of $C$ where $\alpha$ is ramified.  Let $D \subseteq \pi^{-1}(C)$ be the closure of $\eta$, with normalization $\tilde D \to D$.
The pullback $\alpha_{\kappa(\eta)} \in \Br(\kappa(\tilde D))=\Br(\kappa(D))$ of $\alpha \in \Br(\kappa(C))$
is unramified away from $\pi^{-1}(\mc P)$. Since $\pi$ is \'etale over each $P \in \mc P$, so is $D \to C$; hence 
$D$ is regular there and $\til D \to D$ is an isomorphism over ${\mc O}_P(C)$.
So $\tilde D \to C$ is \'etale over $P$, with just one point in the fiber.  The residue field extension there
is the unique degree $\ell$ extension of the finite field $\kappa(P)$,
so it agrees with the residue $\res_P(\alpha) \in H^1(\kappa(P),\mbb Z/\ell \mbb Z)$ of the $\ell$-torsion class $\alpha$ at the ramified point $P$.
Thus $\alpha_{\kappa(\eta)}$ is unramified at each point over $\mc P$, hence at every point of $\til D$.
So the $\ell$-torsion class $\alpha_{\kappa(\eta)}$ lies in $\Br(\til D)$ by \cite[Theorem~3.7.7]{CTSk}.  But 
$\Br(\til D)$ has trivial $\ell$-torsion; e.g., see \cite[Remarque~III.2.5(b)]{GpBr} if $\til D$ is a smooth projective curve over a finite field, and see \cite[Proposition~III.2.4]{GpBr} if instead $\til D = \Spec(\mc O_K)$ for a number field $K$ that is totally imaginary if $\ell=2$. Hence $\alpha_{\kappa(\eta)}$ is split.
\end{proof}

We now come to the main result of this section.

\begin{thm} \label{deg ell splitting}
Let $\mathscr X$ be a two-dimensional regular integral scheme that is projective over a finite field. 
Let $F$ be the function field of $\ms X$.  Assume that $F$ contains a primitive $\ell$-th root of unity for some prime $\ell$,
and let $\gamma_1,\dots,\gamma_N \in H^3(F,\mbb Z/\ell \mbb Z(2))$.  Then there is a field extension of $F$ of degree $\ell$ that splits each $\gamma_j$.
\end{thm}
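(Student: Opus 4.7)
My plan is to construct a degree $\ell$ Kummer extension $L = F(h^{1/\ell})$ whose associated cover $\ms Y \to \ms X$ is carefully controlled over a finite set of closed points, and then to verify that each $\gamma_j$ pulls back to zero by combining the residue-killing effect of wild ramification with the Brauer-class splitting provided by Lemma~\ref{unram over C}. At the end, I would check triviality of the pullbacks using Kato's exact sequence applied to a regular projective model of $L$.

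First, I would analyze the ramification divisor of $\gamma_1,\dots,\gamma_N$ on $\ms X$. Each $\gamma_j$ is ramified along finitely many integral curves $C \subset \ms X$, and for each such $C$ the residue $\alpha_{j,C} := \res_{\eta_C}(\gamma_j) \in \Br(\kappa(C))[\ell]$ is itself ramified at finitely many closed points of (the normalization of) $C$. After performing a sequence of blow-ups at regular points of the ramification divisor and invoking Lemma~\ref{bl up reg pt} to control the residues on exceptional divisors, I would arrange that every ramification curve is regular. Let $\mc P$ be the finite set consisting of all closed points of $\ms X$ where some $\alpha_{j,C}$ is ramified, together with the singular points of the reduced ramification divisor. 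Apply Lemma~\ref{Kummer} to produce a Galois extension $L/F$ of degree $\ell$ whose normalization $\ms Y \to \ms X$ is \'etale over each $P \in \mc P$ with a single point in the fiber.

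To check that each pullback $\gamma_j'$ vanishes in $H^3(L, \mbb Z/\ell\mbb Z(2))$, I would pass to a regular projective model $\ms Y'$ of $L$. Near codimension one branches of $\ms Y \to \ms X$, regularity of $\ms Y$ follows from Lemma~\ref{Kummer regular}; elsewhere any non-regular points can be resolved by further blow-ups, and the new exceptional divisors lie over regular points of $\ms X$ away from the ramification divisor, so they contribute no residues for $\gamma_j'$. Kato's exact sequence then reduces $\gamma_j' = 0$ to showing that $\res_\zeta(\gamma_j') = 0$ for every codimension one point $\zeta \in \ms Y'$. For $\zeta$ lying over a codimension one $\eta \in \ms X$ at which $\gamma_j$ is unramified, Lemma~\ref{unram extension} gives the vanishing. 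If instead $\eta$ is the generic point of a ramification curve $C$, then there are two possibilities by the degree: either the local extension at $\zeta$ is totally ramified (ramification index $\ell$), in which case Lemma~\ref{square} forces $\res_\zeta(\gamma_j') = \ell \cdot (\alpha_{j,C})_{\kappa(\zeta)} = 0$ since $\alpha_{j,C}$ is $\ell$-torsion; or the extension is \'etale at $\zeta$, in which case the hypotheses of Lemma~\ref{unram over C} hold by construction of $\mc P$, and the pullback of $\alpha_{j,C}$ vanishes on the component of $\pi^{-1}(C)$ through $\zeta$.

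I expect the main obstacle to be managing the geometry of the branch locus of $L/F$ on $\ms X$ so that the normalization (or its desingularization) is sufficiently well-behaved to apply both Kato's sequence and Lemma~\ref{unram over C} componentwise. In particular, Lemma~\ref{Kummer regular} only handles one branch at a time, so care is needed at closed points where the divisor of $h$ has multiple components; the resolution of these local singularities of $\ms Y$, together with verifying that exceptional divisors introduce no new residues for any $\gamma_j'$, is the most technically delicate step.
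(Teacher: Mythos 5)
Your proposal follows essentially the same architecture as the paper's proof: regularize the ramification divisor via blow-ups controlled by Lemma~\ref{bl up reg pt}, apply Lemma~\ref{Kummer} to a finite set $\mc P$ containing the ramification points of the residues, verify unramifiedness of the pullbacks on a regular model of $L$ using Lemmas~\ref{square}, \ref{unram extension}, \ref{unram over C} and \ref{Kummer regular}, and conclude by the injectivity of the residue map in Kato's complex. One genuine (and valid) variation: you allow the cover to be totally ramified along a ramification curve $C$ and kill the residue there by the factor $e=\ell$ in Lemma~\ref{square} acting on an $\ell$-torsion class. The paper instead forces $\mc P$ to contain a point of every component $C_i$, so that $\ms Y\to\ms X$ is automatically \'etale over each generic point $\xi_i$ and this case never arises; your extra case costs nothing and slightly relaxes the choice of $\mc P$.

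There is, however, one concrete gap, precisely at the step you flag as delicate. You assert that the non-regular points of $\ms Y$ can be resolved by blow-ups whose exceptional divisors ``lie over regular points of $\ms X$ away from the ramification divisor.'' This is unjustified as stated: Lemma~\ref{Kummer} controls the branch locus $B$ of $L/F$ only at the finitely many points of $\mc P$, so away from $\mc P$ the divisor $B$ may be singular, may have several components through a point of $C$, or may meet $C$ non-transversally. At such points the hypotheses of Lemma~\ref{Kummer regular} fail (it needs the branch locus to be locally a single regular component transverse to $C$), and $\ms Y$ can be singular \emph{over $C$}. Resolving those singularities produces exceptional divisors lying over points of $C$, where the $\gamma_j$ are ramified, and then neither Lemma~\ref{unram extension} nor Lemma~\ref{bl up reg pt} yields vanishing of the residue of $\gamma_j'$ there. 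The repair is to interpose a second round of blow-ups of $\ms X$ making $B\cup C$ a strict normal crossings divisor \emph{before} forming the normalization: since the singular points of $C$ lie in $\mc P$ and $B$ avoids $\mc P$, all centers of these blow-ups lying on $C$ are regular points of $C$, so Lemma~\ref{bl up reg pt} (for centers on $C$) and Lemma~\ref{unram extension} (for centers off $C$) show the $\gamma_j$ stay unramified on the new exceptional divisors; Lemma~\ref{Kummer regular} then applies at every node of $B\cup C$ and shows $\ms Y$ is regular over all of $C$, so the remaining singularities of $\ms Y$, and hence the exceptional divisors of its resolution, genuinely lie over the locus where the $\gamma_j$ are unramified. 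Without this intermediate reduction your final appeal to Kato's sequence does not go through.
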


\begin{proof}
Let $C$ be an effective divisor on $\ms X$ that contains all the codimension one points of $\ms X$ at which at least one of the classes $\gamma_j$ is ramified. By \cite[p.~193]{Lip75}, there is a blow-up $\ms X'$ of $\ms X$ such that the total transform of $C$ is a strict normal crossings divisor (i.e., it has only normal crossings and its components are regular).  So after replacing $\ms X$ by $\ms X'$, we may assume that $C$ itself satisfies this condition.  Let $C_1,\dots,C_m$ be the irreducible components of $C$, with function fields $\kappa(C_i)$, and let $\alpha_{i,j} \in \Br(\kappa(C_i))$ be the residue of $\gamma_j$ at the generic point  $\xi_i$ of $C_i$.  Thus $\alpha_{i,j}$ is $\ell$-torsion.  

Let $\mc P$ 
be a finite set of closed points of $\ms X$ with at least one point on each $C_i$, such that $\mc P$ contains all the singular (normal crossing) points of $C$ and all the points at which any of the classes $\alpha_{i,j}$ is ramified.  (In fact, all of these ramification points are singular points, by the exact sequence at the beginning of the proof of Theorem~\ref{bl up reg pt}.)
Let $L/F$ be the cyclic field extension given by Lemma~\ref{Kummer} applied to the points of $\mc P$.  
Let $\ms Y \to \ms X$ be the normalization of $\ms X$ in $L$, and let $B$ be its branch locus. 
Over each point of $\mc P$ the morphism $\ms Y \to \ms X$ is \'etale and the fiber consists of a single point; hence the same holds for the generic points $\xi_i$ of the curves $C_i$, and moreover the divisor $B$ does not pass through any point of $\mc P$.
There is then a blow-up $\til{\ms X} \to \ms X$, centered only at points where $B \cup C$ has a singularity other than a normal crossing,
such that the total transform of $B \cup C$ is a strict normal crossing divisor.
Since the singular points of $C$ lie in $\mc P$, none of those points lie on $B$ and none of them are among the points that are blown up.  So the proper transform $\til C$ of $C$ maps isomorphically onto $C$, with its irreducible components mapping isomorphically onto respective components $C_i$ of $C$.

We now reduce to the case that $B \cup C$ is itself a strict normal crossing divisor.  To do this, first observe that none of the cohomology classes $\gamma_j$ are ramified at any of the exceptional divisors of $\til{\ms X}\to \ms X$, by Lemma~\ref{unram extension} applied to the complement $U \subseteq \ms X$ of $C$, in the case of an exceptional divisor lying over a point that does not lie on $C$; and by Lemma~\ref{bl up reg pt} in the case of an exceptional divisor lying over a (regular) point of $C$.  Thus the proper transform $\til C$ of $C$ contains all the codimension one points of $\til{\ms X}$ at which at least one of the classes $\gamma_j$ is ramified.  
Let $\til{\ms Y} \to \til{\ms X}$ be the normalization of $\til{\ms X}$ in $L$; its branch locus is contained in the total transform of $B$. So replacing $\ms Y \to \ms X$ by $\til{\ms Y} \to \til{\ms X}$, replacing $C$ by its (isomorphic) proper transform $\til C$ and similarly for its irreducible components $C_i$, replacing $\mc P \subset C$ by its inverse image in $\til C$, and
replacing $B$ by the branch locus of $\til{\ms Y} \to \til{\ms X}$ (which is contained in the total transform of the original $B$),
we may assume that $B \cup C$ is a strict normal crossing divisor in $\ms X$.  In doing so, we retain the property that the cohomology classes $\gamma_j$ are ramified only at codimension one points of $\ms X$ that lie on (the new) $C$.

Our next step is to show that the given cohomology classes $\gamma_j$ are each unramified at every codimension one point of $\ms Y$.  To see this, note that since the given cohomology classes $\gamma_j$ are unramified at the codimension one points on the complement $U \subseteq \ms X$ of $C$, they remain unramified at the codimension one points on its inverse image $V \subseteq \ms Y$ by Lemma~\ref{square}.  
The other codimension one points of $\ms Y$ lie over the generic points $\xi_i$ of $C_i$ for $i=1,\dots,m$.  
As noted above, there is a unique point $\eta_i$ in $\ms Y$ over each $\xi_i$.  Now $\ms Y \to \ms X$ is \'etale over the points of $\mc P$ with each of those fibers consisting of a single point; so this holds in particular at the points where each $\alpha_{i,j}$ is ramified.  It then follows from Lemma~\ref{unram over C} that $(\alpha_{i,j})_{\eta_i}$ is split.   
That is, $\gamma_j$ is unramified at the points $\eta_i \in \ms Y$ lying over the generic points of the curves $C_i$, as well as at the other codimension one points of $\ms Y$; and that completes this step.   

Next, we claim that 
$\ms Y$ is regular at every closed point $Q$ lying over a point $P$ of $C$.  To see this, note that $\ms Y$ is regular at $Q$ if $P$ is not a point of $B$, since $\ms Y \to \ms X$ is \'etale there and $\ms X$ is regular.  Now suppose that $P \in B$.  Then $P$ is a nodal point of $B \cup C$, and is a regular point of $B$ and of $C$, lying on a unique irreducible component of each.  These components are respectively defined in $\mc O_{\ms X,P}$ by elements $f,g$ that form a system of parameters.  By Lemma~\ref{Kummer regular},  $\mc O_{\ms Y,Q}$ is regular, proving the claim.

Every singular point of $\ms Y$ lies in $V$ by the above claim, 
and the two-dimensional normal scheme $\ms Y$ has only finitely many singular points.
Thus there is a blow-up $\ms Z \to \ms Y$ centered at those points of $V$, with $\ms Z$ regular.  This is
an isomorphism away from those finitely many points, and Lemma~\ref{square} implies that the classes $\gamma_j$ are unramified at every codimension one point of $\ms Z$ that lies over a codimension one point of $\ms Y$.  The only other codimension one points of $\ms Z$ are the generic points of the exceptional divisors of the blow-up $\ms Z \to \ms Y$, which lie over closed points of $V$.  Let $W$ be the inverse image of $U \subseteq \ms X$ (or equivalently, of $V \subseteq \ms Y$) in $\ms Z$.  
Applying Lemma~\ref{unram extension} to $W \to U$, we find that the classes  $\gamma_j$ are unramified at the codimension one points of $W$, and in particular at the exceptional divisors of $\ms Z \to \ms Y$.
Since $\ms Z$ is regular with function field $L$,  \cite[Corollary to Theorem~0.7]{Kato} 
asserts that the residue map $H^3(L, \mbb Z/\ell\mbb Z(2)) \to \bigoplus_{\zeta \in \ms Z_1} H^2(\kappa(\zeta), \mbb Z/\ell\mbb Z(1))$ is injective, where $\ms Z_1$ is the set of dimension one points of $\ms Z$.  Hence the pullback of each $\gamma_j$ to $H^3(L, \mathbb Z/\ell\mbb Z(2))$ is trivial, as needed.
\end{proof}

\section{Applications}

This section gives concrete applications of our bound.
We start with an example involving $3$-dimensional fields over the complex numbers. A result of de Jong (\cite{deJ}) shows that for the function field of a complex algebraic surface, the index of a Brauer class (that is, an element in degree $2$ cohomology) must equal its period. In contrast, bounds for the index of a degree~$3$ cohomology class on the function field of a complex threefold are not known. On the other hand, if we consider a somewhat simpler 3-dimensional field $F$, namely a finite extension of the field $\mathbb C(x, y)((t))$, it follows (for example from Lemma~\ref{discrete index bootstrap}) that a class in $H^3(F, \mu_\ell^{\otimes 2})$ will have index at most $\ell$. If $F$ is a finite extension of $\mathbb C(y)((t))(x)$, the arithmetic is more subtle.
Using \cite{deJ} to show $\stspdim{2}{\ell}{\mathbb C(x,y)} \leq 1$, Theorem~\ref{main thm} gives that $\stspdim{2}{\ell}{\mathbb C(y)((t))(x)} \leq 3$ or $4$, depending on the parity of $\ell$. On the other hand, de Jong's theorem does not give us information about $\gstspdim{2}{\ell}{\mathbb C(x,y)}$,
and hence the methods of \cite{gosavi2019generalized} and Proposition~\ref{inductive} do not give bounds on the index of degree~$3$ cohomology classes for such fields.
Using our new results, we obtain the following bounds for degree~$3$ cohomology:

\begin{prop} \label{cx fn fld ex}
Let $k={\mathbb C}(\ms Y)$ be the function field of a complex curve. Let $\ell$ be a prime.
\begin{itemize}
\item[(a)]
If $F$ is a one-variable function field over $k((s))$, then $\gstspdim{3}{\ell}{F} \le 2$ if $\ell$ is odd and $\gstspdim{3}{\ell}{F} \le 3$ if $\ell = 2$.
\item[(b)] More generally, if $F_r$ is a one-variable function field over $k((s_1)) \cdots ((s_r))$ for $r > 0$, then $\gstspdim{3}{\ell}{F} \le (r^2+r+2)/2$ if $\ell$ is odd and $\gstspdim{3}{\ell}{F} \le (r^2+3r+2)/2$ if $\ell = 2$.
\end{itemize}
\end{prop}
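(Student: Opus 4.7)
The plan is to reduce this to the iterative bound of Theorem~\ref{cd iterative sgf} once we identify the cohomological dimension of the base field $k = \mathbb{C}(\mathscr{Y})$. Since $k$ is a one-variable function field over the algebraically closed field $\mathbb{C}$ of characteristic zero, it is a $C_1$ field by Tsen's theorem, so $\cd_\ell(k) \le 1$ for every prime $\ell$. (Alternatively, this follows from the bound $\cd_\ell(k(x)) \le \cd_\ell(k)+1$ of \cite[Proposition~II.4.2.11]{SerreGalois} applied to the extension $\mathbb{C} \subseteq \mathbb{C}(\mathscr{Y})$.) Consequently $H^3(k,\mu_\ell^{\otimes 2}) = 0$, and similarly $H^3(k(x),\mu_\ell^{\otimes 2}) = 0$ since $\cd_\ell(k(x)) \le 2$.

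For part~(a), I would apply Theorem~\ref{main thm} to $F$, regarded as a one-variable function field over the complete discretely valued field $K = k((s))$ with residue field $k$. The vanishing of the relevant cohomology groups gives $\gstspdim{3}{\ell}{k} = 0$ and $\gstspdim{3}{\ell}{k(x)} = 0$, so Theorem~\ref{main thm} yields $\gstspdim{3}{\ell}{F} \le \varepsilon$, where $\varepsilon = 2$ for $\ell$ odd and $\varepsilon = 3$ for $\ell = 2$, exactly as claimed.

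For part~(b), I would invoke Theorem~\ref{cd iterative sgf} directly with $c = \cd_\ell(k) = 1$ and $m = 2$ so that $c + m = 3$. For $r \ge 1$ we are in the range $2 \le m \le r+1$ covered by that theorem (the boundary case $r = 1$ giving $m = r+1$ is allowed and recovers part~(a)), and the bound reads
\[
\gstspdim{3}{\ell}{F_r} \;\le\; \tfrac{1}{2}(r-1)(r-2) + r\varepsilon.
\]
Expanding with $\varepsilon = 2$ gives $(r^2+r+2)/2$, and with $\varepsilon = 3$ gives $(r^2+3r+2)/2$, matching the two cases in the statement.

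I do not anticipate any real difficulty: once the cohomological dimension of $\mathbb{C}(\mathscr{Y})$ is pinned down, the proposition is purely a bookkeeping consequence of the iterative bound already established, and the only thing to verify is that the explicit formulas simplify to the shapes stated.
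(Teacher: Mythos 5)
Your proposal is correct and follows essentially the same route as the paper: both establish $\gstspdim{3}{\ell}{k}=\gstspdim{3}{\ell}{k(x)}=0$ from the cohomological dimensions of $k$ and $k(x)$, then apply Theorem~\ref{main thm} for part~(a) and Theorem~\ref{cd iterative sgf} with $m=2$ for part~(b). Your extra checks (Tsen's theorem for $\cd_\ell(k)\le 1$, the range $2\le m\le r+1$, and the algebraic simplification) are just the details the paper leaves implicit.
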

\begin{proof}
Note that $k$ and $k(x)$ have cohomological dimension $1$ and $2$ respectively, and thus $\gstspdim{3}{\ell}k=\gstspdim{3}{\ell}{k(x)}=0$. The first statement now follows directly from Theorem~\ref{main thm}. The second statement is by Theorem~\ref{cd iterative sgf} (with $m=2$).
\end{proof}
In the situation above, Theorem~\ref{cd iterative sgf} also gives bounds for $\gstspdim{i}{\ell}{F_r}$ when $3<i<r+3$; e.g., $\gstspdim{4}{\ell}{F_r}\leq (r^2-r+2)/2$ if $\ell$ is odd and $\gstspdim{4}{\ell}{F_r} \le (r^2+r)/2$ if $\ell = 2$. As $i$ increases, $\gstspdim{i}{\ell}{F_r}$ decreases, and becomes~$0$ for $i\geq r+3$. Bounds for $\gstspdim{2}{\ell}{F_r}$ were given in \cite{gosavi2019generalized}.

\medskip

We now move on to a class of examples related to global residue fields. Information about the period-index problem for degree $2$ cohomology classes when $F$ is a one-variable function field over a number field has been highly sought after. As of yet, bounds of this type are only known contingent upon conjectures of Colliot-Th\'el\`ene \cite{LPS}. Remarkably, the work of Lieblich \cite{Lie15} has shown that the index divides the square of the period in the case of a function field~$F$ of a surface over a finite field, giving $\stspdim{2}{\ell}{F} \leq 2$ in this case. Nevertheless, in neither situation do we have information on $\gstspdim{2}{\ell}{F}$, and so again we are unable to apply  \cite{gosavi2019generalized} or Proposition~\ref{inductive} to obtain bounds on the index of a cohomology class of degree higher than~$3$. On the other hand, degree $3$ cohomology over such fields is much more directly tractable, as was highlighted in the work of Kato \cite{Kato}. 
Building on Theorems~\ref{arith deg 2 splitting} and~\ref{deg ell splitting} above together with our previous results, we obtain Proposition~\ref{globalsplitting}, Proposition~\ref{number field ex}, and the numerical examples that follow.  First we state a lemma.

\begin{lem} \label{nontriv_h3}
Let $k$ be a global field, let $E$ be the function field of a regular projective $k$-curve $C$, and let $\ell$ be a prime unequal to $\cha(k)$. Then $H^3(E, \mu_\ell^{\otimes 2}) \neq 0$.
\end{lem}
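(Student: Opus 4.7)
The plan is to exhibit a nonzero element of $H^3(E,\mu_\ell^{\otimes 2})$ of the form $(\pi)\cup\beta_E$, where $\pi\in E^\times$ is a rational function with $v_P(\pi)=1$ at a suitably chosen closed point $P\in C$, and $\beta_E \in H^2(E,\mu_\ell)$ is the pullback along $k\hookrightarrow E$ of a carefully chosen nonzero global Brauer class $\beta \in \Br(k)[\ell]=H^2(k,\mu_\ell)$. Nonvanishing is verified by showing that the residue at $P$ is nonzero.

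To arrange the choice of $P$ and $\beta$ compatibly, I would first locate a place $v$ of $k$ at which $C$ has a $k_v$-rational point. Such places exist in abundance: at every place of good reduction whose reduction has a smooth rational point --- and there are infinitely many such places by the Weil estimates, after possibly passing to a finite base change to reduce to the geometrically integral case --- Hensel's lemma supplies one. A $k_v$-rational point of $C$ maps to a closed point $P\in C$ whose residue field $\kappa:=\kappa(P)$ admits a place $w\mid v$ with completion $\kappa_w=k_v$, in particular of local degree $1$. Second, by Brauer--Hasse--Noether, choose $\beta\in H^2(k,\mu_\ell)$ with $\operatorname{inv}_v(\beta)\ne 0$. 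The pullback $\beta_E$ is unramified at every closed point of $C$, and its specialization $\beta|_\kappa$ at $P$ has local invariant $[\kappa_w:k_v]\cdot\operatorname{inv}_v(\beta)=\operatorname{inv}_v(\beta)\ne 0$ at $w$; hence $\beta|_\kappa$ is nonzero in $H^2(\kappa,\mu_\ell)$.

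Finally, pick $\pi\in E^\times$ with $v_P(\pi)=1$, obtained by viewing any uniformizer of the DVR $\mathcal{O}_{C,P}$ as a rational function on $C$, and set $\gamma=(\pi)\cup\beta_E\in H^3(E,\mu_\ell^{\otimes 2})$, with $(\pi)\in H^1(E,\mu_\ell)$ denoting the Kummer class. The standard residue identity for a cup product whose first factor is the Kummer class of a uniformizer and whose second factor is unramified yields
\[
\partial_P\bigl((\pi)\cup\beta_E\bigr)=v_P(\pi)\cdot\beta|_\kappa=\beta|_\kappa\ne 0\in H^2(\kappa,\mu_\ell),
\]
so $\gamma$ itself is nonzero.

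The main obstacle is the geometric fact invoked in the second paragraph, namely that every regular projective integral curve over a global field has local points at some completion. In the geometrically integral case this is routine (Weil plus Hensel), and the general case reduces to that by a finite base change. A cleaner alternative bypasses the obstacle entirely: choose \emph{any} closed point $P_0\in C$, lift a nonzero class in $H^2(\kappa(P_0),\mu_\ell)$ (which exists because $\kappa(P_0)$ is itself a global field) to $H^2(\widehat{\mathcal{O}}_{C,P_0},\mu_\ell)$ by Henselian invariance of étale cohomology, spread it out to $H^2(U,\mu_\ell)$ on a sufficiently small étale neighborhood $U\to C$, and descend to $H^3(E,\mu_\ell^{\otimes 2})$ via a corestriction, using the compatibility of residues with corestriction to guarantee that nonvanishing of the residue at $P_0$ is preserved.
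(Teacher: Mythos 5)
Your core idea is sound and genuinely different from the paper's. The paper takes an \emph{arbitrary} closed point $P$, uses that $\kappa(P)$ is a global field to produce a nontrivial $\alpha\in\Br(\kappa(P))[\ell]$, and then invokes a theorem of Saltman to lift $\alpha$ to $\Br(\mc O_{C,P})$ before cupping with a uniformizer; the lifting is the one delicate input. You instead use a class $\beta_E$ that is constant along $C$ (pulled back from $k$), so unramifiedness on $C$ and the identification of the specialization at $P$ with $\beta|_{\kappa(P)}$ are immediate; the price is that $P$ and $\beta$ must be chosen compatibly so that $\beta|_{\kappa(P)}\neq 0$. The final residue computation is the same in both arguments.

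The gap is in how you produce $P$. A $k_v$-rational point of $C$ need \emph{not} map to a closed point: its image can be the generic point (e.g.\ a $\mbb Q_p$-point of $\mbb P^1_{\mbb Q}$ with transcendental coordinate). Moreover, ``passing to a finite base change to reduce to the geometrically integral case'' replaces $E$ by a finite extension without explaining how to descend, and over a global function field a regular projective curve need not be smooth, so the Weil--Hensel step needs further care. None of this is needed: take \emph{any} closed point $P$ with residue field $\kappa$. The purely inseparable part of $\kappa/k$ contributes only powers of $\cha(k)\neq\ell$ to local degrees, and by Chebotarev there are infinitely many places $v$ of $k$ splitting completely in the Galois closure of the separable closure of $k$ in $\kappa$; for such $v$ every place $w\mid v$ of $\kappa$ has $[\kappa_w:k_v]$ prime to $\ell$. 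Choosing $\beta\in\Br(k)[\ell]$ with $\operatorname{inv}_v(\beta)\neq 0$, your computation then goes through verbatim, since $[\kappa_w:k_v]\cdot\operatorname{inv}_v(\beta)\neq 0$. By contrast, your ``cleaner alternative'' is not a repair as stated: after corestricting from the function field $E'$ of the \'etale neighborhood $U$, the residue at $v_{P_0}$ is a sum of corestrictions of residues taken over \emph{all} places of $E'$ above $v_{P_0}$ --- including those at points of the normalization of $C$ in $E'$ that do not lie on $U$ --- and you have no control over those terms, so nonvanishing of the residue is not guaranteed.
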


\begin{proof}
Let $P \in C$ be any closed point, and let $k'$ be its residue field.  Since $k'$ is also a global field, 
the $\ell$-torsion subgroup $\Br(k')[\ell] \subseteq \Br(k')$ is non-trivial (e.g., by \cite[Corollary~6.5.3, Proposition~6.3.7]{GSz} in the function field case and \cite[Theorem~18.5]{Pie82} in the number field case).
The period and index of a non-trivial element $\alpha \in \Br(k')[\ell]$
both equal $\ell$ since $k'$ is a global field. By \cite[Theorem~3.11, Corollary~5.3]{Sa:RRC}, as $\ell$ is prime, we may lift $\alpha$ to an index 
$\ell$ class $\til \alpha \in \Br(\ms O_{C, P})[\ell] \subseteq \Br(E)[\ell] = H^2(E, \mu_\ell)$. Let $t \in \mc O_{C, P} \subset E$ be a uniformizer at $P$, and set $\beta = \til \alpha \cup (t) \in H^3(E, \mu_\ell^{\otimes 2})$. Then $\res_{v_P}(\beta) = \alpha \neq 0$ by \cite[Proposition~II.7.11, p.~18]{GMS}, using that the residue homomorphism $\res_{v_P}$ associated to the discrete valuation $v_P$ defined by $P$ is defined by passing through the completion (\cite[Section~II.7.13, p.~19]{GMS}). Hence $\beta \in H^3(E, \mu_\ell^{\otimes 2})$ is nonzero.
\end{proof}

\begin{prop} \label{globalsplitting}
Suppose $k$ is a global field.  If $k$ is a function field, choose a prime $\ell\neq \cha(k)$.  If $k$ is a number field, take $\ell=2$.  Let $E$ be
 a one-variable function field over $k$. 
Then $\spdim{3}{\ell}{E} = \stspdim{3}{\ell}{E} =\gstspdim{3}{\ell}{E}=1$. 
\end{prop}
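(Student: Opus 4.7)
The plan is to combine the standard chain of inequalities $\spdim{3}{\ell}{E} \le \stspdim{3}{\ell}{E} \le \gstspdim{3}{\ell}{E}$ with a matching upper and lower bound.

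For the lower bound $\spdim{3}{\ell}{E} \ge 1$, I would invoke Lemma~\ref{nontriv_h3} to produce a nonzero class $\alpha \in H^3(E, \mu_\ell^{\otimes 2})$. By Lemma~\ref{l-power}, $\ind(\alpha)$ is a power of $\ell$; and by the restriction-corestriction principle (Remark~\ref{rescores}), no extension of degree prime to $\ell$ can split a nontrivial class, so $\ind(\alpha)$ cannot equal $1$. Hence $\ind(\alpha) \ge \ell$ and therefore $\spdim{3}{\ell}{E} \ge 1$.

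For the upper bound $\gstspdim{3}{\ell}{E} \le 1$, I would fix an arbitrary finite extension $E'/E$ and an arbitrary finite subset $B \subseteq H^3(E', \mu_\ell^{\otimes 2})$, and produce a common splitting field whose degree over $E'$ has $\ell$-adic valuation at most $1$. Since $E'/E$ is finite, $E'$ is again a one-variable function field over a finite extension $k'$ of $k$ (namely the algebraic closure of $k$ in $E'$). Using Lipman's theorem, one can choose a regular projective model of $E'$ either over a finite field (if $k$ is a global function field) or over $\Spec(\mc O_{k'})$ (if $k$ is a number field). In the number field case, where $\ell = 2$, I would apply Theorem~\ref{arith deg 2 splitting} directly to this model to obtain a degree-$2$ common splitting field for $B$; hence $\ind(B) \mid 2 = \ell$.

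In the function field case the analogous role is played by Theorem~\ref{deg ell splitting}, but this requires a primitive $\ell$-th root of unity in the base. I would therefore first pass to the extension $E'' := E'(\rho)$, where $\rho$ is a primitive $\ell$-th root of unity. The degree $[E'':E']$ divides $\ell-1$ and so is prime to $\ell$; moreover $E''$ remains a finitely generated extension of transcendence degree two over the prime field, so (again by Lipman) admits a regular projective model over a finite field. Applying Theorem~\ref{deg ell splitting} to the classes pulled back to $E''$ yields a common degree-$\ell$ splitting field $L/E''$, so that $L/E'$ is a splitting field for $B$ of degree $\ell\,[E'':E']$, with $\ell$-adic valuation equal to $1$. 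Combining this with the fact that $\ind(B)$ is a power of $\ell$ (Lemma~\ref{l-power}) forces $\ind(B) \mid \ell$, as desired. Since $E'$ and $B$ were arbitrary, this gives $\gstspdim{3}{\ell}{E} \le 1$, and the proposition follows. The main obstacle is the reduction to the roots-of-unity case in the function field setting; this is a cosmetic difficulty resolved by restriction-corestriction combined with the fact that adjoining $\rho$ merely enlarges the constant field and preserves the geometric structure needed for Theorem~\ref{deg ell splitting}.
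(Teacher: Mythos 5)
Your proposal is correct and follows essentially the same route as the paper: a lower bound from Lemma~\ref{nontriv_h3} together with restriction--corestriction, reduction to classes over a one-variable function field over a global field containing $\rho$ (an extension of degree prime to $\ell$), and then Theorem~\ref{deg ell splitting} in the function field case and Theorem~\ref{arith deg 2 splitting} in the number field case. The only differences are presentational (you work over an arbitrary finite extension $E'$ directly rather than observing that such extensions are again of the same form).
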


\begin{proof}
By Lemma~\ref{nontriv_h3}, $H^3(E, \mu_\ell^{\otimes 2}) \neq 0$, hence $0 < \spdim{3}{\ell}{E} \le \stspdim{3}{\ell}{E} \le \gstspdim{3}{\ell}{E}$.   
Thus to show that $\spdim{3}{\ell}{E} = \stspdim{3}{\ell}{E}=\gstspdim{3}{\ell}{E} =1$, it suffices to prove that $\gstspdim{3}{\ell}{E}$ is at most $1$. Every finite extension of $E$ is of the same form (i.e., a one-variable function field over a global field).  So it suffices to consider classes in $H^3(E, \mu_\ell^{\otimes 2})$, and not separately treat classes over finite extensions $E'$ of $E$.  By Lemma~\ref{l-power}, we may also assume that $E$ contains a primitive $\ell$-th root of unity, since adjoining this element produces a field extension of degree prime to $\ell$. 

If $k$ is a function field, then the desired assertion
is now immediate from Theorem~\ref{deg ell splitting}.  In the case where $k$ is a number field and $\ell=2$, it is immediate from Theorem~\ref{arith deg 2 splitting}.
\end{proof}

Our next examples concern function fields over higher local fields whose residue field is a global field. Examples of such fields include 
$F=K(x)$ where $K={\mathbb Q}((s))$ or ${\mathbb F}_p(y)((s))$, or where $K$ is the $p$-adic completion of 
${\mathbb Q}_p(t)$, or where $K$ 
is a field of iterated Laurent series over one of these fields.

\begin{prop} \label{number field ex}
Let $k$ be a global field, and let $\ell\neq \cha(k)$ be a prime. In the number field case assume $\ell=2$.  Suppose we have a sequence of fields $k = k_0, k_1, \ldots, k_r$, with $r \ge 1$, where $k_j$ is a complete discretely valued field with residue field $k_{j-1}$ for all $j\geq 1$, and let $F$ be a one-variable function field over $k_r$.  Then
\begin{itemize}
\item if $\ell$ is odd, we have $\gstspdim{3}{\ell}{F} \leq 1 + \frac r 2 (r +3)$,
\item if $\ell$ is even, and $k$ has no real places, we have $\gstspdim{3}{\ell}{F} \leq 1 + \frac r 2 (r +5)$,
\item if $\ell$ is even, and $k$ has real places, we have $\gstspdim{3}{\ell}{F} \leq 2 + \frac r 2 (r +5)$.
\end{itemize}
\end{prop}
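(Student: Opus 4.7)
The plan is to apply Theorem~\ref{new iterative} with $i=3$, which reduces the problem to computing $d := \gstspdim{3}{\ell}{k}$ and $\delta := \gstspdim{3}{\ell}{k(x)}$. The hypotheses of Proposition~\ref{globalsplitting} are satisfied for $k(x)$ in all three cases of the proposition, so $\delta = 1$ throughout.

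For the first case ($\ell$ odd, so $k$ is a global function field of characteristic $\neq \ell$) and the second case ($\ell = 2$ with $k$ totally imaginary), the field $k$ has $\ell$-cohomological dimension at most $2$, hence $d = 0$. Substituting $d = 0$ and $\delta = 1$ into Theorem~\ref{new iterative} directly yields the first two claimed bounds.

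The remaining case, where $\ell=2$ and $k$ has a real place, is the main point. Here $\cd_2(k) = \infty$, so no useful constant bounds $d$. I would instead reduce to the previous case by setting $k' := k(\sqrt{-1})$ (which is totally imaginary) and $k'_j := k_j(\sqrt{-1})$, and checking that the tower $k' = k'_0, k'_1, \ldots, k'_r$ is itself a tower of complete discretely valued fields with successive residue fields. This verification---that each $k'_j/k_j$ is either trivial or the unramified quadratic extension with residue field $k'_{j-1}$---follows from Hensel's lemma applied to $x^2+1$, which is available because $\ell=2$ differs from all residue characteristics in the tower. Applying the second case to $(k'_j)$ and to the one-variable function field $F' := F \cdot k'_r$ over $k'_r$ gives $\gstspdim{3}{2}{F'} \le 1 + \frac{r}{2}(r+5)$. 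A direct degree argument then finishes the proof: given any finite extension $E/F$ and finite collection $B \subseteq H^3(E, \mu_2^{\otimes 2})$, base-changing to $E' := E\cdot k'$ (a finite extension of $F'$ of degree dividing $2$ over $E$) and splitting $B$ over $E'$ produces a splitting field whose degree over $E$ has $2$-adic valuation at most $1 + \gstspdim{3}{2}{F'} \le 2 + \frac{r}{2}(r+5)$. I expect the tower verification to be the only real subtlety; everything else is direct substitution into Theorem~\ref{new iterative}.
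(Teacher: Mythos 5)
Your proof is correct and follows essentially the same route as the paper: apply Theorem~\ref{new iterative} with $d=0$ (from $\cd_\ell(k)\le 2$ when $k$ has no real places) and $\delta=1$ (from Proposition~\ref{globalsplitting}), then handle real places by adjoining $\sqrt{-1}$ at the cost of one extra factor of $2$. The only difference is that you spell out the tower verification for $k_j(\sqrt{-1})$ and the final degree count, which the paper leaves implicit.
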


\begin{proof}
Since $F$ is a finite extension of $k_r(x)$, we have that $\gstspdim{3}{\ell}{F} \le \gstspdim{3}{\ell}{k_r(x)}$.  Hence it suffices to prove the assertion for $F=k_r(x)$.

If $k$ is a number field (and $\ell=2$), we can reduce to the case that $k$ has no real places by adjoining a square root of $-1$ if necessary.  This 
increases by $1$ the power of $\ell$ in the degree of the splitting extension, and so the bound on $\gstspdim{3}{\ell}{F}$ increases by $1$ (as in the assertion of the third case).  So we can now assume that the global field $k$ has no real places, and in particular that we are in one of the first two cases. 

In the notation of Theorem~\ref{new iterative} with $i=3$, we have $d=\gstspdim{3}{\ell}{k} = 0$, by \cite[Proposition~II.4.4.13]{SerreGalois} in the case of a totally imaginary number field, and by \cite[Corollary in II.4.2]{SerreGalois} in the global function field case. Moreover, $\delta = \gstspdim{3}{\ell}{k(x)}=1$ by Proposition~\ref{globalsplitting}. Theorem~\ref{new iterative} thus gives the desired bounds.
\end{proof}

In the situation of Proposition~\ref{number field ex}, if $k$ has no real places, then for $r=1,2,3$ we find $\gstspdim{3}{\ell}{F} \le 3,6,10$, respectively, if $\ell$ is odd; and 
$\le 4,8,13$, respectively, if $\ell=2$.  
Again, Theorem~\ref{cd iterative sgf} gives information on the higher cohomology groups. Note that $c=\cd_\ell(k)=2$ as in the above proof; moreover, $\gstspdim{3}{\ell}{k(x)}=1$ by Proposition~\ref{globalsplitting}. Hence for this field $F$ with $r=1,2,3$, Theorem~\ref{cd iterative sgf} yields that
$\gstspdim{4}{\ell}{F} \le 2,4,7$ respectively if $\ell$ is odd, and $\le 3,6,10$ respectively if $\ell=2$.  Observe that our bound for $\gstspdim{i}{\ell}{F}$ decreases as $i$ increases.  For example, if $r=3$ then
$\gstspdim{i}{\ell}{F} \le 10,7,4,2,0$ for $i=3,4,5,6,7$ if $\ell$ is odd, and $\le 13, 10, 6, 3, 0$ if $\ell=2$.
Note in particular the relationship between the bounds for $\gstspdim{i}{\ell}{F}$ as $i$ increases and those as $r$ decreases (and see Remark~\ref{iterative rk}(\ref{up and down}) for a further discussion).

On the other hand, if $k$ is a number field with a real place (and $\ell=2$), then 
the bounds each increase by $1$ as above.  For example, for $r=1,2,3$ in that case, we have $\gstspdim{3}{2}{F} \le 5,9,14$ and $\gstspdim{4}{2}{F} \le 4,7,11$, respectively.  And for $r=3$ in that case, $\gstspdim{i}{2}{F} \le 14, 11, 7, 4, 1$ for $i=3,4,5,6,7$.

\newcommand{\etalchar}[1]{$^{#1}$}
\def\cprime{$'$} \def\cprime{$'$} \def\cprime{$'$} \def\cprime{$'$}
  \def\cftil#1{\ifmmode\setbox7\hbox{$\accent"5E#1$}\else
  \setbox7\hbox{\accent"5E#1}\penalty 10000\relax\fi\raise 1\ht7
  \hbox{\lower1.15ex\hbox to 1\wd7{\hss\accent"7E\hss}}\penalty 10000
  \hskip-1\wd7\penalty 10000\box7}

\medskip

\noindent{\bf Author Information:}

\medskip
 
\noindent David Harbater\\
Department of Mathematics, University of Pennsylvania, Philadelphia, PA 19104-6395, USA\\
email: harbater@math.upenn.edu

\medskip

\noindent Julia Hartmann\\
Department of Mathematics, University of Pennsylvania, Philadelphia, PA 19104-6395, USA\\
email: hartmann@math.upenn.edu

\medskip

\noindent Daniel Krashen\\
Department of Mathematics, University of Pennsylvania, Philadelphia, PA 19104-6395, USA\\
email: dkrashen@math.upenn.edu

\medskip

\noindent The authors were supported on NSF grants DMS-1805439 and DMS-2102987 (DH and JH), and by NSF RTG grant DMS-1344994 and NSF grant DMS-1902144 (DK).

\end{document}